\theoremstyle{plain}
\newtheorem{theorem}{Theorem}[section]
\newtheorem{theoremdef}[theorem]{Theorem and definition}
\newtheorem{lemma}[theorem]{Lemma}
\newtheorem{assumption}[theorem]{Assumption}
\newtheorem{corollary}[theorem]{Corollary}
\theoremstyle{definition}
\newtheorem{example}[theorem]{Example}
\newtheorem{definition}[theorem]{Definition}
\newtheorem{remark}[theorem]{Remark}
\newtheorem{notation}[theorem]{Notation}
\newtheorem{claim}[theorem]{Claim}
\providecommand{\msc}[1]{{\small \textit{Mathematics Subject Classification ---} #1}}
\renewcommand{\tilde}{\widetilde}
\DeclareSymbolFont{fouriersymbols}{FMS}{futm}{m}{n}
\DeclareSymbolFont{fourierlargesymbols}{FMX}{futm}{m}{n}
\DeclareMathDelimiter{\vvert}{\mathord}{fouriersymbols}{152}{fourierlargesymbols}{147}
\newcommand\nn[1]{\vvert#1\vvert_{\gamma,\gamma';\alpha,\sigma}}
\newcommand\nnn[1]{\vvert#1\vvert_{\gamma,\gamma';\alpha,\gamma}}
\def\cB{\mathcal{B}}
\def\cC{\mathcal{C}}
\def\cD{\mathcal{D}}
\def\cF{\mathcal{F}}
\def\cK{\mathcal{K}}
\def\cL{\mathcal{L}}
\def\cO{\mathcal{O}}
\def\cR{\mathcal{R}}
\def\cX{\mathcal{X}}	
\def\cY{\mathcal{Y}}
\def\cZ{\mathcal{Z}}
\newcommand{\R}{\ensuremath{\mathbf R}} 
\newcommand{\C}{\mathbf{C}} 
\newcommand{\T}{\mathbf{T}} 
\newcommand{\N}{\mathbf{N}_0} 
\newcommand{\X}{\mathbf{X}}
\newcommand{\XX}{\mathbb{X}}
\newcommand{\xx}{(\mathbb{X}-\updelta X^{\otimes2})}
\newcommand{\xxx}{(\mathbb{\bar X}-\updelta \bar X^{\otimes2})}
\newcommand{\dd}{\updelta}
\renewcommand{\Re}{{\mathrm{Re}}}
\newcommand{\dom}{{\mathcal O}}
\newcommand{\id}{I}
\colorlet{darkred}{red!90!black}
\begin{document}
\title{Quasilinear rough evolution equations}

\author{Antoine Hocquet\thanks{Technische Universit\"at Berlin, Straße des 17. Juni 136
		10623 Berlin, Germany.~E-Mail: antoine.hocquet86@gmail.com}~~~and~~Alexandra Neam\c{t}u\thanks{University of Konstanz, Department of Mathematics and Statistics,  Universit\"atsstra\ss{}e~10 78464 Konstanz, Germany. E-Mail: alexandra.neamtu@uni-konstanz.de }}

\maketitle

\begin{abstract}
We investigate the abstract Cauchy problem for a quasilinear parabolic equation in a Banach space of the form
\( du_t  -L_t(u_t)u_t dt = N_t(u_t)dt + F(u_t)\cdot d\mathbf X_t \),
where \( \mathbf X\) is a \( \gamma\)-H\"older rough path for \( \gamma\in(1/3,1/2)\). We explore the mild formulation that combines functional analysis techniques and controlled rough paths theory which entail the local well-posedness of such equations. We apply our results to the stochastic Landau-Lifshitz-Gilbert and Shigesada-Kawasaki-Teramoto equation. In this framework we obtain a random dynamical system associated to the Landau-Lifshitz-Gilbert equation.
\end{abstract}

\msc{
60L50, 
60L20, 
60H15, 
37H05 
}

{\small \textit{Keywords---} quasilinear parabolic equations, pathwise mild solution, stochastic partial differential equations, rough paths, random dynamical systems}

 \tableofcontents

\section{Introduction}
In this work we contribute to the solution theory of quasilinear SPDEs within the rough paths framework. More precisely, for a given initial data \( x \) in a Banach space \( \cY\)
we consider the following Cauchy problem for a quasilinear non-degenerate parabolic rough partial differential equation of the form 
\begin{equation} 
\label{problem}
d u_t - L_t(u_t)udt = N_t(u)dt + \sum_{i=1}^dF^i(u)d\X^i_t,
\quad \quad 
u_0=x\in \cY,
\end{equation}
where $\X$ is a $\gamma $-H\"older, $d$-dimensional rough path 
$\X=(X^{\mu },\XX^{\mu \nu })_{1\leq \mu ,\nu \leq d}$,
with $\gamma\in(\frac{1}{3},\frac{1}{2})$ and $d\geq 1.$ 
We obtain existence and uniqueness of solutions for~\eqref{problem} and also continuous dependence of the solution on the corresponding rough path, i.e.~the continuity of the map $\X\mapsto u$ with respect to a suitable topology.\\
The method we employ is a mixture of Amann's and Yagi's classical treatment of quasilinear problems \cite{amann1986quasilinear,Yagi1} and recent progress on rough evolution equations \cite{gerasimovics2020non,gerasimovics2019hormander,gubinelli2010rough,deya2012nonlinear,HN19}. More precisely we construct in a pathwise way, solutions satisfying the variation of constants formula
\begin{equation}
\label{variation_of_constants}
u_t = S^u_{t,0}u_0 + \int_0^t S^u_{t,s}N(u_s)ds + \int_0^t S^u_{t,s}F(u_s)\cdot d\X_s,
\end{equation}
where \( S^u_{t,s}=\exp(\int_s^tL(u_r)dr) \) are parabolic evolution operators. In order to make sense of the rough convolution 
\( \int_0^t S^u_{t,s}F(u_s)\cdot d\X_s \),
we explore the mixed parabolic/rough regularity properties that result from the smoothing effect of the evolution operators and a variation of the usual sewing lemma argument with a weight at the origin.
To this aim we make use of the controlled rough paths framework which has been intensively studied in the context of rough evolution equations~\cite{deya2012nonlinear,gubinelli2004controlling,gerasimovics2019hormander,gerasimovics2020non,HN19} and the references specified therein. 
We extend the framework developed in \cite{gerasimovics2020non}, where spaces of controlled rough paths 
\begin{equation}
\label{space_D}
\cD_{X,\alpha}^{2\gamma}=\cD_{X,\alpha}^{2\gamma}((\cB_{\beta})_{\beta\in \R})
\end{equation}
have been introduced, for a monotone family of Banach spaces \( (\cB_{\beta})_{\beta\in \R} \) subject to some interpolation properties (these include Sobolev or Bessel-potential spaces). 
In this setting, a pair $(y,y')\colon [0,T]\to \cB_{\alpha}\times (\cB_{\alpha-\gamma})^d$ of continuous paths is called a \textit{controlled rough path} in \( \cD^{2\gamma}_{X,\alpha} \), if it satisfies an inequality of the form
\begin{equation}
\label{remainder_intro}
|y_t-y_s - y'_s \cdot (X_t-X_s)|_{\alpha-2\gamma}\lesssim (t-s)^{2\gamma}
\end{equation}
uniformly for each \( 0\le s\le t\le T \), and provided \( y \) (resp.\ \( y' \)) is \( \gamma \)-H\"older as a path with values in \( \cB_{\alpha-\gamma} \) (resp.\ in \( \cB_{\alpha-2\gamma} \)).
Because the first component of a controlled path is only required to be \( \gamma \)-H\"older, the relation \eqref{remainder_intro} incorporates a cancellation between the increment \( \dd y_{s,t}:=y_t-y_s \) and the term \( y_s'\cdot (X_{t}-X_s)\). Hence the space of controlled rough paths is built in such a way that, at small scales, the first component \( y \) always resembles the reference path \( X \) (informally speaking), while the \textit{Gubinelli derivative} which is the second component \( y' \), gives a corresponding modulation.
In \eqref{space_D}-\eqref{remainder_intro}, we remark that the parameter \( \gamma \) is present in the ``time-like'' quantity \( (t-s)^{2\gamma} \) but also as a degree of spatial regularity on the left hand side. 
As already underlined in \cite{gerasimovics2020non}, this happens for at least 2 reasons. The first is that solutions of parabolic PDEs, when they exist, are generally not H\"older regular paths in the Banach space where the initial condition lies, but rather in a larger space. Hence the need to take a lower spatial regularity index in \eqref{remainder_intro}, and this is so regardless of the H\"older regularity of the control \( X \). The second reason is that in many applications, the rough non-linearity \( u\mapsto F(u) \) is unbounded. A plausible scenario is when \( F \) is a deregularizing operator, going from \( \cB_{\beta}\to \cB _{\beta-\sigma}\) for some \( \sigma\ge0 \) and every \( \beta\) in some compact interval \([\beta_0,\alpha]\).
Since we expect any reasonable controlled rough path solution \( (u,u') \) of \eqref{problem} to satisfy \( u'_t=F(u_t),\forall t\in[0,T] \), we see that in this case the spatial regularity of the Gubinelli derivative needs to be lowered down from \( \cB_{\alpha} \) to \( \cB_{\alpha-\sigma}\). A further analysis of \eqref{problem} reveals moreover that the deregularizing parameter (or ``spatial loss'') \( \sigma \) cannot exceed the H\"older regularity of \( X \), hence the choice made in \cite{gerasimovics2020non} to work in a setting when \( u'\colon [0,T]\to \cB_{\alpha-\gamma}\) by default (even if it means losing valuable information when \( F \) is bounded).

Due to the quasilinear nature of the ansatz \eqref{problem}, we have to restrict the range of $\beta\in[0,1]$ and consequently work with the scale $(\cB_\beta)_{\beta\in[0,1]}$. To make sure that all typical operations with rough paths (for~e.g.~composition with a smooth function) are well-defined within this scale of Banach spaces, this constraint on the possible range of indices forces us to keep track of the spatial loss \( \sigma \) and to introduce a more specific class of controlled rough paths 
\begin{equation}
	\label{new_CP}
	\cD^{\gamma,\gamma'}_{X,\alpha,\sigma}=\cD^{\gamma,\gamma'}_{X,\alpha,\sigma}((\cB_{\beta})_{\beta\in [0,1]})\,.
\end{equation}
The parameter $\gamma'>1-2\gamma$ is also needed to distinguish the H\"older regularity of the Gubinelli derivative from that of the solution, which turns out to be a key ingredient in our analysis.
This new framework is consistent with the one developed in~\cite{gerasimovics2020non} in that \( \cD_{X,\alpha,\gamma}^{\gamma,\gamma} \simeq \cD_{X,\alpha}^{2\gamma} \) as Banach spaces
and moreover \(\cD_{X,\alpha,\sigma}^{\gamma,\gamma'}(J) \hookrightarrow \cD_{X,\alpha}^{2\gamma}(J) \) holds provided \( \gamma'\ge \gamma\), \( \sigma\le\gamma\) and
\( J\subset(0,T] \) is open (see Lemma \ref{lem:interpolation} for a precise statement).
However, another typical challenge in the quasilinear case is to quantify the dependence on the initial data in the fixed point argument. Here one needs that the difference between two different generators applied to the same initial data be $\gamma$-H\"older regular in $\cB_{\alpha-\sigma}$. This is not true for initial data belonging to $\cB_\alpha$, a problem that was easily circumvented in the semilinear ansatz considered in \cite{gerasimovics2020non} by subtracting the linear part \( S_{t,0}x \) to the unknown. Such a reduction procedure is not possible for \eqref{problem} in general, we need instead to introduce a weight in zero similar to \cite{HN19}, which accounts from the possible blow-up behaviour near the origin for solutions of \eqref{problem}, with speed \( t^{-\varepsilon} \) where $\varepsilon:=(\gamma-\sigma)_+$.
This trick enables us to solve~\eqref{problem} by means of a fixed point argument in the space of controlled rough path $\cD^{\gamma,\gamma'}_{X,\alpha,\sigma}([0,T])$, where all the previous norms are slightly modified according to the corresponding weight. To make the argument work, we had to extend the affine sewing lemma developed in~\cite{gerasimovics2020non} to define the stochastic convolution and to quantify the gain of spatial regularity obtained by this operation, taking into account the weight at the origin.\\
In conclusion, the results obtained in this work provide a theory for a pathwise solvability of parabolic quasilinear SPDEs of the form
\begin{equation}
\label{abstract_SPDE}
du_t -L_t(u_t)udt= N_t(u_t)dt + F(u_t)\cdot dW_t ,\quad \quad u_0=x\in \cY
\end{equation}
(which can be interpreted in the It\^o or Stratonovich sense), where \( W \) is a \( d \)-dimensional Brownian motion and \( \cY \) is an interpolation space between a reference Banach space \( \cX \), and the domain \( \cX_1=D(L(x))\) of the \( \cX\)-realization of \( L(x)\). This includes for instance \( \cX=L^p(\R^n)\) for \( p\in[1,\infty) \), which has the UMD 2-smooth property if \(p\ge2\). In that case, the It\^o integration theory carries over, however the mild solution theory breaks down due to measurability issues.
This was observed in~\cite{PronkVeraar} in the context of semilinear non-autonomous random evolution equations when $(L(t,\omega))_{t\in[0,T],\omega\in\Omega}$ is a family of (linear) non-autonomous random operators generating parabolic evolution families $S(t,s,\omega)=\exp\int_s^t L(r,\omega)dr$.
In this setting, as justified in~\cite[Proposition 2.4]{PronkVeraar},
  the mapping $\omega\mapsto S_{t,s,\omega}$ is only $\mathcal{F}_t$-measurable. However, in order to define the stochastic  convolution $\int\nolimits_0^t S_{t,s,\omega}F(u_s)~dW_s $ as an It\^o integral, the $\mathcal{F}_s$-measurability of the mapping $\omega\mapsto S_{t,s,\omega}F(u_s)$ is required. The non-adaptedness of the integrand in the definition of a  stochastic integral was firstly discussed by Al\'os, Le\'on and Nualart in \cite{AlLeNu99, leon1998stochastic} using the Skorokhod and the Russo-Vallois~\cite{RuVa93} forward integral. Similar to~\cite{PronkVeraar}, in~\cite{leon1998stochastic} such problems arise for semilinear SPDEs with random, non-autonomous generators.
 Furthermore, in \cite{AlLeNu99, leon1998stochastic} it was shown that a Skorokhod-mild solution for such SPDEs does not satisfy the weak formulation, whereas the forward mild one (based on the Russo-Vallois integral) does. The extension of~\cite{leon1998stochastic} from the semilinear to the quasilinear context (as treated here) remains unclear.
 For semilinear SPDEs with random non-autonomous generators, the concept of {\em pathwise mild solution} was introduced in~\cite{PronkVeraar}. This can be justified by the integration by parts formula given by
 \[
 \int _0^t S_{t,r} F(u_r) dW_r =  S_{t,0}\int _0^t F(u_\tau) dW_\tau -\int _0^tS_{t,r}L_r\int_r^t F(u_\tau) dW_\tau~dr.
 \]
 Moreover, the pathwise mild solution is equivalent to the forward mild one, as established in~\cite{PronkVeraar}. This idea was the starting point for~\cite{kuehn2018pathwise}, which extends the pathwise mild solution theory to the quasilinear case. The equivalence between pathwise mild and weak solutions was justified in~\cite{DhariwalHuberN}.
Concerning nonlinear stochastic partial differential equations with Gaussian noise, we should also mention the recent approach of Agresti and Veraar \cite{agresti2020nonlinear1} (see also \cite{agresti2020stability}), which is based quite differently on \( L^p(L^q) \) type estimates.\\
 As already emphasized a first goal of this work is to go beyond the Gaussian setting and extend the solution theory of~\eqref{problem} to possibly rougher random inputs. Even though our results are deterministic in nature (note that It\^o calculus is only used in the stochastic examples), we will nonetheless connect rough paths and pathwise mild solutions in the Brownian case in a forthcoming paper.\\ 
We point out that controlled rough paths can be seen as particular instances of modelled or paracontrolled distributions such as the ones considered \cite{BDH19,FuGu19,GeHa19,OtWe19, OttoS}. A purpose of these works was to deal with an input \( X(t,x) \) which is irregular in the \( x \)-variable as well (such as space-time white noise).  Here we focus on situation when the regularity is low only in the time-like variable, however we obtain more in return. Indeed, aside from a relative gain of simplicity in our computations, a merit of our functional analytic framework is that it is quite general and integrates particularly well with boundary value problems (such as Dirichlet or Neumann homogeneous).\\
As an application of our main abstract result, we will also establish local well-posedness results for quasilinear parabolic initial boundary value problems of the form
\[
\left \{\begin{aligned}
&du - \mathscr L(t,x,u,Du)udt 
\\
&\quad \quad \quad 
=  g(t,x,u_t,Du_t)dt
 +\sum\nolimits _{i=1}^df^i(x,u_t)d\X^i_t,
\quad \text{ in }(0,T]\times\dom
\\
&\mathscr Bu=0\quad \text{ on }(0,T]\times \partial \dom,
\\
&u(0,\cdot)=u_0\quad \text{ on }\dom,
\end{aligned}\right .
\]
where $\dom$ is a domain in $\R^n$ with a smooth boundary and $\mathscr B$ is an operator whose kernel encodes either Dirichlet, Neumann, or periodic boundary conditions if \( \dom=(-1,1)^n. \)
Here $\mathscr L(t,x,u,Du)$ is a strongly parabolic differential operator of order $2$ and the nonlinear terms $g,f$ are Nemyitskii operators.\\
Another advantage of our approach is that it provides a natural framework to study random dynamical systems associated to~\eqref{problem}. In contrast to the deterministic setting~\cite{amann95quasilinear,Yagi1}, results in the literature concerning flows for stochastic quasilinear evolution equations (based on a semigroup approach) are difficult to find.
Combining rough path tools together with semigroup arguments, our approach allows to deduce immediately that the solution operator of~\eqref{problem} generates a random dynamical system, provided however that we can ensure its global well-posedness. We provide more details on this topic in Section~\ref{rds}, where we further obtain a random dynamical system for the stochastic Landau-Lifshitz-Gilbert (LLG) equation. Based on these results one could further investigate the existence of equilibria and their transitions~\cite{br2} combining random dynamical systems and rough paths methods, which may be done in a future contribution.
To our best knowledge, this is the first work that constructs mild solutions for the stochastic LLG equation and establishes a random dynamical system for it.

\paragraph{Structure of the paper.}
 In Section \ref{s:preliminaries} we collect important properties on parabolic evolution operators and controlled rough paths according to a monotone family of interpolation spaces. We also introduce an appropriate solution concept for~\eqref{problem} and state the main result (Theorem~\ref{thm:main}). We also state and prove a main auxiliary result regarding approximation of integrals via dyadic sums (Lemma \ref{lem:dyadic}) which is one of the core arguments used throughout the rest of the paper.
In Section \ref{sec:rough_convolutions}, we explain our functional analytic framework and formulate our main structural assumptions on scales of Banach spaces and operators. We then state results on the existence and uniqueness of certain rough convolutions with a blow-up behaviour near \( t=0\), and then collect the key perturbation estimates which are required for the proof of our main result. Section \ref{sec:evolution} contains the proof of Theorem \ref{thm:main} by means of a fixed point argument. 
We show in Section \ref{sec:quasilinear_systems} how to apply our main result to a class of rough quasilinear parabolic systems with a non-degenerate elliptic part. 
The results are further specified in Section \ref{sec:stochastic_examples}, where a stochastic framework is considered. Section~\ref{rds} deals with random dynamical systems for~\eqref{problem}. 
In Appendix \ref{app:perturbation}, we provide important estimates for the difference of two propagators. These are crucial for a perturbation result of the sewing map (Theorem~\ref{thm:perturbation}). Finally, in Appendix~\ref{app:plain} we establish some results on the H\"older norms of controlled rough paths in terms of their reduced increments. 
 These essentially simplify the computations in Section~\ref{sec:evolution}.

\subsubsection*{Acknowledgements}
AH was funded by Deutsche Forschungsgemeinschaft (DFG) through grant CRC 910 ``Control of self-organizing nonlinear systems: Theoretical methods and concepts of application'', Project (A10) ``Control of stochastic mean-field equations with applications to brain networks.''
AH wishes to thank A.~Gerasimovi\v{c}s for a helpful discussion on weights.

\section{Preliminaries}\label{s:preliminaries}
\subsection{Frequently used notation}
We let $\mathbf N=1,2,\dots,$ $\N:=\mathbf N\cup \{0\}$. Real numbers are denoted by $\R$  and complex numbers by \( \mathbf C\). For any \( x,y\in \R ,\) we denote by \( x\vee y=\max (x,y) \), \( x\wedge y=\min(x,y) \) and we also introduce the notation \( x_+=x\vee0. \)
For \( \vartheta \ge 0 \), we denote by \( \Sigma_\vartheta \subset \mathbf C\) the sector
\[
\Sigma_\vartheta= \{z\in \mathbf C:|\arg z|\le \vartheta\}\cup \{0\}\,.
\]
If \( \cX ,\cY\) are Banach spaces, the notation $\cY\hookrightarrow \cX$ means that \( \cY\) is a dense subset of \(\cX \) and that the inclusion map is continuous. The Banach space of bounded linear operators from \( \cX\to \cY \) is denoted by \( \cL(\cX,\cY) \) or \( \cL(\cX)\) when \( \cX=\cY\). The induced operator norm is denoted by \( |Q|_{\cX\to\cY}=\sup_{|x|_{\cX}\le 1}|Qx|_{\cY}\).

We denote by \(C(\cX,\cY)\) (resp.\ \( C_b(\cX,\cY)\)) the space of all continuous (resp.\ continuous and bounded) functions from $\cX$ to $\cY$, endowed with the topology of uniform convergence on compact subsets. 
Moroever if $\rho\in (0,1)$ we define the usual space of H\"older functions $C^\rho(\cX,\cY)$. It is equipped with the locally convex topology defined by the semi-norms
\begin{equation}
	\label{holder_semi}
\sup_{x\in U}|f|_{V} + [f]_{\rho,\cY}^U,\quad U\subset \cX,\quad U\text{ compact.}
\end{equation}
where $[f]^U_{\rho,\cY}$ is the usual H\"older semi-norm on $U$.
The previous definition is still meaningful for $\rho=1$ and we write instead $C^{1-}$, which corresponds to the usual space of locally-Lipshitz functions. 
Moreover, given another Banach space $\cX'$ and $\rho'\geq 0$, $C^{\rho,\rho'}(\cX\times \cX';\cY)$ denotes the set of functions which are jointly H\"older continuous on \( \cX, \cY\), with respective exponents \( \rho,\rho'\).

If \( F\colon \cX\to \cY \) is a Fr\'echet-differentiable map, we denote by \( DF(x)\circ h \) its differential evaluated at \( x\), in the direction \( h\). If \( \cZ \hookrightarrow \cX\) is a third Banach space such that
\( DF(\cdot)|_{\cZ} \) is itself Fr\'echet-differentiable, we think of its derivative \( D^2F(x) \) as a bounded linear map from \( \cZ\otimes \cX\to \cY \) where, unless otherwise stated, the tensor product \( \cZ\otimes \cX \) is equipped with the projective norm.\\
If a scale of Banach spaces \( (\cB_{\beta})_{\beta \in J}\) (see Subsection~\ref{scale}) is given and \( \alpha,\alpha',\beta \in J\) are fixed indices, we adapt the previous notations and write
\( |Q|_{\alpha\to\beta}:=|Q|_{\cB_{\alpha}\to\cB_{\beta}} \), \( |Q|_{(\alpha,\alpha')\to\beta}:=|Q|_{\cB_{\alpha}\otimes\cB_{\alpha'}\to\cB_{\beta}}  \) and similarly for higher order tensor products.

\paragraph{Paths.}
Throughout the paper we consider a finite, fixed time horizon $T>0.$ We introduce the simplices
\( \Delta_2:=\{(t,s)\in[0,T]^2:t\ge s\}\) and
\( \Delta_3:=\{(t,u,s)\in[0,T]^3:t\ge u\ge s\}.\)
If \( \cX \) is a Banach space, \( y\colon [0,T]\to \cX \) is a path and \( S\colon \Delta_2\to \cL(\cX) \), we say that \( S \) is \textit{multiplicative} if \( S_{t,u}S_{u,s}=S_{t,s} ,\) \( \forall (s,u,t)\in \Delta_3 \). 
Given such \( S \), we introduce the $S$-increment of \(y\colon [0,T]\to\cX\) as
\( \dd^S y_{t,s}:= y_t-S_{t,s}y_s\) for \((t,s)\in \Delta_2\),
and similarly \(\dd^S z_{t,u,s}:= z_{t,s}-z_{t,u}-S_{t,u}z_{u,s},\) 
if  \((t,u,s)\in \Delta_3\) and \(z\colon\Delta_2\to \cX.\)
Recall that $\dd^S z_{t,u,s}\equiv0$ if and only if there is a \(y\colon [0,T]\to \cX\) such that $z_{t,s}=\dd^{S}y_{t,s},$ $\forall(t,s)\in \Delta_2$.
Throughout the paper, we will use the abbreviation $\dd=\dd^{\id}$ which corresponds to increments in the usual sense. It is worth noticing the tautological but nonetheless useful identity
\begin{equation}\label{id_delta_S}
	\dd z _{t,s} = \dd^{S} z_{t,s} + (S_{t,s}-I)z_s,\quad \quad (s,t)\in \Delta_2.
\end{equation}
By convention, we let \(\cC_1^\gamma (0,T;\cX )=C^\gamma(0,T;\cX)\) denote the usual Banach space of \( \gamma\)-H\"older functions, with norm \( |y|_{\gamma,\cX}=\sup_{t\in [0,T]}|y_t|_{\cX}+[\dd y]_{\gamma,\cX}.\)
We denote by \(\cC^\gamma_2(0,T;\cX) \) the space of H\"older continuous (in a generalized sense) functions from \( \Delta_2\to\cX\). It is defined as the Banach space of 2-parameter maps  \( z=z_{t,s} \) that vanish on the diagonal \( s=t \), and such that the norm \( [z]_{\gamma,\cX}:= \sup_{0\le s<t\le T}|z_{t,s}|_{\cX}/(t-s)^{\gamma}\) is finite. \\
Furthermore, given a scale \( (\cB_{\beta})_{\beta \in J}\), we use the abbreviations \(|\cdot|_{\gamma,\beta }:=|\cdot |_{\gamma,\cB_\beta}\), \([\cdot ]_{\gamma ,\beta }:=[\cdot ]_{\gamma ,\cB_\beta }\), and similarly for operator-valued paths: \( |\cdot |_{\gamma,\alpha\to\beta}:=|\cdot|_{\gamma,\cB_\alpha\to\cB_{\beta}}\), \( |\cdot|_{\gamma,(\alpha,\alpha')\to\beta} \), etc.

\subsection{Weights at the origin}\label{weight}
For our aims (in order to set up a fixed point argument in Section~\ref{sec:evolution}), it is necessary to introduce suitable weights for the supremum and H\"older norm at the origin. Therefore, given a scale of Banach spaces $(\cB_\beta)_{\beta\in[0,1]}$ and a path $y_t\in\cB_\beta$, we introduce for a non-negative parameter $\varepsilon\ge0$
the semi-norm
\[
|y|^{(\varepsilon)}_{0,\beta}:= \sup_{t\in[0,T]}t^\varepsilon|y_t|_{\beta}. 
\]
Similarly, for \( R=R_{t,s}\in \cB_\beta\) and \( \gamma\ge0\), we define the weighted $\gamma$-H\"older semi-norm of $R$ in $\cB_\beta$ as 
\[
[R]^{(\varepsilon)}_{\gamma,\beta}:= \sup_{(s,t)\in \Delta_2,\, s>0}s^{\varepsilon}\frac{|R_{t,s}|_{\beta}}{(t-s)^\gamma}
.\]
Furthermore, we denote by $\cC^{\gamma,\varepsilon}([0,T];\cB_\beta)$ the modified H\"older spaces, see~\cite{lunardi,HN19}. We drop the dependence on the time horizon when this is clear from the context. 

\subsection{Scales of Banach spaces and controlled paths}\label{scale}

Let \( J\subset \R \) be an interval.
In the sequel, we call \textit{scale} any family of separable Banach spaces $(\cB_\beta,|\cdot|_\beta)_{\beta \in J}$ such that \( \cB_{\alpha}\hookrightarrow \cB_{\beta} \) for each \( \alpha\le \beta \in J\).
Throughout this manuscript we will focus on the case \( J=[0,1] \) and \( \cB_1 \) is the generator domain of a given analytic semigroup on \( \cB_0 \).\\
More precisely, we assume that $(\cX,|\cdot|)$ is a Banach space and
denote by $(L,D(L))$ the generator of a strongly continuous semigroup of contractions $(S_t)_{t\ge0}$ on \( \cX.\)
For simplicity, we assume that the resolvent set $\rho(L)$ contains \( 0 \), i.e.\ \( L\colon D(L)\to \cX \) is invertible.
We then set \( (\cB_0,|\cdot|_0):=(\cX,|\cdot|)\) and \( (\cB_1,|\cdot|_1)=(D(L),|L\cdot|)\) (whose norm the norm is equivalent to the usual graph norm).
For the intermediate spaces \( \cB_\beta\) with $\beta\in(0,1)$, we make the following assumption.
\begin{assumption}
	\label{ass:intermediate}
 The scale \( (\cB_\beta)_{\beta\in [0,1]}\) satisfies for any \( \alpha\le \beta\le\gamma \in [0,1] \) with  \( \beta\in (0,1)\) the relation
\begin{equation}
\label{intermediate_spaces}
(\cB_\alpha,\cB_\gamma)_{\theta,1}\hookrightarrow
 \cB_\beta \hookrightarrow
(\cB_\alpha,\cB_\gamma)_{\theta,\infty},\quad \quad 
\text{where }\theta= \frac{\beta-\alpha}{\gamma-\alpha}.
\end{equation}
\end{assumption}

\begin{remark}
	Condition \eqref{intermediate_spaces} guarantees in particular that \( (\cB_\beta)_{\beta\in [0,1]} \) is a monotone family of interpolation spaces in the sense of \cite{gerasimovics2019hormander}. This means that
	for each $\alpha \leq \beta \leq \gamma \in [0,1]$ and $x \in \cB_{\alpha}\cap \cB_{\gamma}$, the following interpolation inequality
	\begin{equation} \label{interpolation}
	|x|^{\gamma-\alpha}_\beta \lesssim |x|^{\gamma-\beta}_\alpha |x|^{\beta-\alpha}_\gamma
	\end{equation}
	holds true.
	For a proof of this statement, we refer to \cite[Proposition 1.3.2]{lunardi}. Note that the inequality \eqref{interpolation} is equivalent to the first embedding in \eqref{intermediate_spaces}.
\end{remark}

\begin{example}
	\label{exa:scales} Relation \eqref{intermediate_spaces} is satisfied for instance when \( \cB_\beta:=F_\beta(\cB_0,\cB_1)\), where \( F_\beta\) is either the complex interpolation \( F_\beta:=[\cdot,\cdot]_\beta\), or any of the real interpolation functors \( (\cdot,\cdot)_{\beta,p}\) for \( p\in [1,\infty]\). We refer again to~\cite{lunardi} for further details regarding interpolation theory and point out important function spaces that fall into our framework.
In particular, when the underlying domain is the full space \( \dom=\R^n\), Assumption \ref{ass:intermediate} covers the following important cases:
\begin{itemize}
\item[1)] (Besov or Sobolev spaces) \( \cB_\beta :=(L^p(\cO),W^{2,p}(\cO))_{\beta,q}=\mathscr B^{2\beta}_{p,q}(\cO)\) for any  \( p\in [1,\infty]\).   
Note that the Sobolev scale corresponds to the choice \( p=q\) for which \( \cB_\beta =W^{2\beta,p}(\cO) \). 
\item[2)] (Bessel potential spaces) \( \cB_{\beta}:=[L^p(\cO),W^{2,p}(\cO)]_{\beta}=H^{2\beta,p}(\cO)\).
\item[3)] ``Sobolev tower'', see Example \ref{exa:sobolev_tower} below.
\end{itemize}
\end{example}

\begin{example}[Sobolev tower]
\label{exa:sobolev_tower} 
Assume that \( L \) is the generator of an analytic semigroup on a separable Banach space \( \cX\).
In this case we can introduce (see~\cite[Chap.~2]{Yagi1})) the \textit{fractional powers} of \( L \) as follows.
If \( z\in \C \) is such that \( \Re z >0\), we let 
\begin{equation}
\label{frac_power}
(-L)^{-z}=\frac{1}{2\pi i}\int_{\Gamma}\zeta^{-z}(\zeta + L)^{-1}d\zeta,
\end{equation}
where \( \Gamma \subset \mathbf C\) is any integral contour surrounding \( \sigma(-L) \) counterclockwise in \( (\mathbf C\setminus (-\infty,0])\cap \rho(-L) \).
As is well-known, the integral \eqref{frac_power} is convergent in \( \cL(\cX) \) and coincides with \( (-L)^{-n} = (-L^{-1})^n \) for any integer \( z=n \).
When \( \text{Re}(z) <0 \) we define \( D(L^z)=\text{Im} (L^{-z})\subset \cX \) and 
 \( L^{z} =(L^{-z})^{-1}\), which is unbounded. Keeping this definition in mind, we now state the following standard and useful properties of the fractional powers:
\begin{itemize}
\item \( (-L)^x \) are bounded operators on \( \cX \) for \( -\infty<x\le 0 \) and densely defined, closed linear operators for \( x>0 \).
\item \( D(-L^{x_2})\subset D(-L^{x_1}) \) if \( 0\le x_1\le x_2<\infty \);
\item \( (-L)^x(-L)^{x'} = (-L)^{x'}(-L)^{x} = (-L)^{x+x'} \) for any \( x,x'\in \R. \) 
\end{itemize}

With this definition at hand, we can build a scale \( (\cB_{\beta})_{\beta\in [0,1]}\) as follows:
\( \cB_0=\cX\), 
\(\cB_\beta=(D((-L)^\beta),|(-L)^\beta\cdot|)\) for \( \beta\in (0,1)\).
Moreover, condition \eqref{intermediate_spaces} is fulfilled if \( L\) has bounded imaginary powers (see \cite[Chap.~4]{lunardi}). This is particularly satisfied if the operator \( L\) is positive and self-adjoint.
\end{example}

We now provide some fundamental concepts from rough path theory starting with the definition of a $d$-dimensional $\gamma_0$-H\"older rough path for $\gamma_0\in(\frac{1}{3},\frac{1}{2})$.

\begin{definition}[$\gamma_0$-H\"older rough path]\label{hrp}
	Let $J \subset \mathbb{R}$ be  a compact interval. We call a pair $\mathbf{X}=(X,\XX)$ $\gamma$-H\"older rough path if $X\in C^{\gamma_0}(J, \mathbb{R}^d)$ and $\XX\in C^{2\gamma_0}(\Delta_{J}, \mathbb{R}^d\otimes \mathbb{R}^d)$, where $\Delta_{J} := \left\{\left(s,t \right) \in J^2 \colon  s \leq t \right\}$.   
	Furthermore $X$ and $\XX$ are connected via Chen's relation, meaning that
	\begin{align}\label{chen}
	\XX_{t,s} - \XX_{u,s} - \XX_{t,u} = \dd X_{u,s}\otimes \dd X _{t,u}
	,~~ \mbox{for } s,u,t \in J,~~ s \leq u \leq t .
	\end{align}
\end{definition}

\begin{definition}
	Let $J\subset\mathbb{R}$ be a compact interval and let $\mathbf{X}$ and $\mathbf{\tilde{X}}$ be two $\gamma$-H\"older rough paths. We introduce the $\gamma$-H\"older rough path (inhomogeneous) metric
	\begin{align}\label{rp:metric}
		\rho_{\gamma_0}(\mathbf{X},\mathbf{\tilde{X}} )
		:= \sup\limits_{(s,t)\in \Delta_J} \frac{|\dd X_{t,s}-\dd\tilde{X}_{t,s}|}{|t-s|^{\gamma_0}} 
		+ \sup\limits_{(s,t) \in \Delta_{J}}
		\frac{|\XX_{t,s}-\mathbb{\tilde X}_{t,s}|} {|t-s|^{2\gamma_0}}.
	\end{align}
	We set $\rho_{\gamma_0}(\mathbf{X}):=\rho_{\gamma_0,[0,T]}(\mathbf{X},0)$ and denote the set of $d$-dimensional $\gamma_0$-H\"older rough paths on the interval $[0,T]$ by $\cC^{\gamma_0}(0,T;\R^d)$.
\end{definition}

Our next task is to introduce a suitable space of \( X \)-controlled paths, in the spirit of \cite{gerasimovics2020non}.
Here we introduce additional parameters in order to quantify the space and time regularity of our paths in an optimal way. More precisely, we consider a scale \((\cB_\beta)_{\beta\in [0,1]}\) subject to Assumption \ref{ass:intermediate}, let $\gamma\in(\frac{1}{3},\frac{1}{2})$ denote the regularity of the random input and fix a parameter \( \alpha\in (\sigma,1)\) where $\sigma\in[0,\gamma]$. 

\begin{definition}[\( X\)-controlled rough paths according to a scale of Banach spaces]
	\label{def:controlled}
	Let	\( \X \equiv(X,\XX)\in \mathscr C^{\gamma}(0,T;\R^d)\) for some \( \gamma\in (\frac13,\frac12)\) and assume that \( \sigma\in [0,\gamma]\) is given. Consider two continuous paths \( y\colon [0,T]\to \cB_\alpha\) and \( y'\colon [0,T]\to \cL(\R^d;\cB_{\alpha-\sigma})\simeq \cB_{\alpha-\sigma}^d\).
	We say that the pair $(y,y')$ is \emph{controlled by $X$} (with spatial regularity loss \( \sigma\)) if there is a \( \gamma'\in (1-2\gamma,\gamma]\) such that
	\begin{enumerate}[label=(\roman*)]
		\item\label{D1}
		We have \(y\in \cC([0,T];\cB_\alpha)\cap \cC^\gamma((0,T];\cB_{\alpha-\sigma})\) while \(y'\in\cC((0,T];\cB_{\alpha-\sigma}^d)\cap\cC^{\gamma'}((0,T];\cB_{\alpha-\sigma-\gamma'}^d)\);
		\item\label{D2}
		The remainder $R^y$ defined as:
		\begin{equation*}
		R^y_{t,s} =\dd y_{t,s}-y'_t\cdot \dd X_{t,s} =  \dd y_{t,s} - \sum\nolimits_{i=1}^dy_t^{\prime,i}\dd X^i_{t,s}\text{ for each } 0\leq s\leq t \leq T ,
		\end{equation*}
		belongs to \( \cC_2^{\gamma+\gamma'}((0,T];\cB_{\alpha-\sigma-\gamma'}) \).
	\end{enumerate}
Keeping this in mind, we additionally introduce a suitable weight $\varepsilon\geq 0$ at the origin to describe a norm on the space of such paths, recall Subsection~\ref{weight}. We denote the space of all controlled rough paths by $\cD^{\gamma,\gamma'}_{X,\alpha ,\sigma}([0,T];\varepsilon)$, or simply $\cD^{\gamma,\gamma'}_{X,\alpha ,\sigma}$ when the dependence on the time horizon $T>0$ and weight \( \varepsilon\ge0\) is clear from the context.
	We endow this space with the norm
	\begin{equation}
	\label{new_norm}
	\|y,y'\|_{\cD^{\gamma,\gamma'}_{X,\alpha,\sigma}} := |y|_{0,\alpha} +[\dd y]^{(\varepsilon)}_{\gamma,\alpha-\sigma}+ |y'|_{0,\alpha-\sigma}^{(\varepsilon)} + [\dd y']^{(2\varepsilon)}_{\gamma',\alpha-\sigma-\gamma'} +  [R^y]^{(2\varepsilon)}_{\gamma+\gamma',\alpha-\sigma-\gamma'}\,.
	\end{equation}
Therefore $(\cD^{\gamma,\gamma'}_{X,\alpha,\sigma}, \|\cdot\|_{\cD^{\gamma,\gamma'}_{X,\alpha,\sigma}})$ is a Banach space. 
	\end{definition}

\begin{remark}
	\begin{itemize}
		\item [1)] If \( \gamma'=\gamma=\sigma\), the space $\cD^{\gamma,\gamma'}_{X,\alpha,\sigma}$ reduces to the space of controlled rough paths $\cD^{2\gamma}_{X,\alpha}$ introduced in~\cite{gerasimovics2020non}.
		\item [2)] As we will see later on, the natural choice of the weight is $\varepsilon=(\gamma-\sigma)_+$. This means that $\varepsilon$ reflects the interplay between the time regularity of the rough path and the loss of spatial regularity of the controlled rough path.
		\item [3)] A natural example of an element which belongs to $\cD^{\gamma,\gamma'}_{X,\alpha,\sigma}([0,T];(\gamma-\sigma)_+)$ is given by the pair $(S_tx,0)$, where $(S_t)_{t\geq 0}$ is an analytic semigroup and $x\in\cB_\alpha$, compare Lemma~\ref{lem:weight}.
	\end{itemize}
\end{remark}

A useful consequence of the interpolation inequality~\eqref{interpolation} is that the norm \eqref{new_norm} is equivalent to the apparently stronger one introduced in \cite[Sec.~4.2]{gerasimovics2020non}.	
\begin{lemma}
	\label{lem:interpolation}
	Let \( (\cB_{\beta})_{\beta\in [0,1]} \) be as in \eqref{intermediate_spaces} and fix \( \sigma\in [0,\gamma)\) and \( \alpha\in [2\sigma,1)\).
	The following interpolation estimates hold
	for each \( \theta \in [0,1]\):
	\begin{align}
		\label{apriori_dd_y_epsilon}
		[\dd y]^{(\theta\varepsilon)}_{\theta\gamma,\alpha-\theta\sigma} \lesssim _T (1\vee\rho_\gamma(\X))\|y,y'\|_{\cD_{X,\alpha,\sigma}^{\gamma,\gamma'}}\,,
		\\
		\label{apriori_R_gamma}
			[R^y]^{(\varepsilon(1+\theta))}_{\gamma+\theta\gamma',\alpha-(1+\theta)\sigma-\gamma'} 
			\lesssim (1\vee\rho_\gamma(\X))\|y,y'\|_{\cD_{X,\alpha,\sigma}^{\gamma,\gamma'}}\,,
			\\
			\label{apriori_dd_prime}
			[\dd y']^{(\varepsilon(1+\theta))}_{\theta\gamma',\alpha-(1+\theta)\sigma-\gamma'}\lesssim (1\vee\rho_\gamma(\X))\|y,y'\|_{\cD_{X,\alpha,\sigma}^{\gamma,\gamma'}}\,.
	\end{align}

As a further consequence, the norms \( \|y,y'\|_{\cD_{X,\alpha,\gamma}^{\gamma,\gamma}} \) and \(\|y,y'\|_{\cD_{X,\alpha}^{2\gamma}}= \|y,y'\|_{\cD_{X,\alpha,\gamma}^{\gamma,\gamma}} -[\dd y]_{\gamma,\alpha-\gamma}+ [R^y]_{\gamma,\alpha-\gamma} \) are equivalent under Assumption \ref{ass:intermediate}.
\end{lemma}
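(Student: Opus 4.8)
\textbf{Proof strategy for Lemma \ref{lem:interpolation}.}

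The plan is to prove the three families of estimates \eqref{apriori_dd_y_epsilon}--\eqref{apriori_dd_prime} by interpolating between the two ``extreme'' regularities that are already built into the norm \eqref{new_norm}, and then read off the final claim about norm equivalence as the special case $\sigma=\gamma'=\gamma$. The key tool is the interpolation inequality \eqref{interpolation}: for a fixed pair $(t,s)\in\Delta_2$, $s>0$, one views the relevant increment (either $\dd y_{t,s}$, or $R^y_{t,s}$, or $\dd y'_{t,s}$) as an element lying simultaneously in two spaces $\cB_{\beta_0}$ and $\cB_{\beta_1}$ of the scale, with known bounds in each, and then deduces a bound in the intermediate space $\cB_{\beta_\theta}$, $\beta_\theta=(1-\theta)\beta_0+\theta\beta_1$, of size (bound in $\cB_{\beta_0}$)$^{1-\theta}\cdot$(bound in $\cB_{\beta_1}$)$^{\theta}$. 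Crucially, because the two bounds carry different powers of $(t-s)$ and different weights $s^{-\cdot}$ at the origin, the interpolated bound automatically produces the interpolated time exponent and the interpolated weight; this is exactly the structure of the left-hand sides in \eqref{apriori_dd_y_epsilon}--\eqref{apriori_dd_prime}.

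First I would handle \eqref{apriori_dd_y_epsilon}. From the norm \eqref{new_norm} we know $|y_t|_\alpha\le |y|_{0,\alpha}$ for all $t$ (so in particular $|\dd y_{t,s}|_\alpha\lesssim\|y,y'\|$ with no time weight), and $|\dd y_{t,s}|_{\alpha-\sigma}\le [\dd y]^{(\varepsilon)}_{\gamma,\alpha-\sigma}(t-s)^\gamma s^{-\varepsilon}$. Applying \eqref{interpolation} with endpoints $\beta_0=\alpha$ (no time/weight factor) and $\beta_1=\alpha-\sigma$ and parameter $\theta$ gives $|\dd y_{t,s}|_{\alpha-\theta\sigma}\lesssim \|y,y'\|^{1-\theta}\bigl([\dd y]^{(\varepsilon)}_{\gamma,\alpha-\sigma}\bigr)^{\theta}(t-s)^{\theta\gamma}s^{-\theta\varepsilon}$; dividing by $(t-s)^{\theta\gamma}s^{-\theta\varepsilon}$ and taking the supremum yields \eqref{apriori_dd_y_epsilon}, up to a $T$-dependent constant to absorb the mismatch between $|y|_{0,\alpha}$ and $[\dd y]$ and to cover the degenerate endpoint $\theta\in\{0,1\}$. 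For \eqref{apriori_R_gamma} the endpoints are $\beta_0=\alpha-\sigma-\gamma'$ where one uses the remainder bound $[R^y]^{(2\varepsilon)}_{\gamma+\gamma',\alpha-\sigma-\gamma'}$, and $\beta_1$ a space where $R^y$ has better spatial regularity — here one must first reconstruct a bound for $R^y$ in $\cB_{\alpha-2\sigma-\gamma'}$ (or similar) from the identity $R^y_{t,s}=\dd y_{t,s}-y'_t\cdot\dd X_{t,s}$ together with the bounds on $\dd y$ in $\cB_{\alpha-\sigma}$, on $y'$ in $\cB_{\alpha-\sigma}$, and the rough path bound $|\dd X_{t,s}|\le\rho_\gamma(\X)(t-s)^\gamma$; this is where the factor $(1\vee\rho_\gamma(\X))$ on the right-hand side originates. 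Then interpolate. The estimate \eqref{apriori_dd_prime} is analogous, using $[\dd y']^{(2\varepsilon)}_{\gamma',\alpha-\sigma-\gamma'}$ at one endpoint and $|y'|^{(\varepsilon)}_{0,\alpha-\sigma}$ at the other.

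The main obstacle, and the step requiring the most care, is the bookkeeping of the weights at the origin: the various semi-norms in \eqref{new_norm} carry weights $s^\varepsilon$ or $s^{2\varepsilon}$, and after interpolation one must check that the exponent of $s$ produced by the geometric mean matches exactly the weight $\theta\varepsilon$, $\varepsilon(1+\theta)$, etc.\ appearing on the left-hand sides — this works precisely because the weight scales linearly with the ``number of $\gamma'$'s'' lost, i.e.\ $2\varepsilon$ accompanies the two-unit loss $\sigma+\gamma'$ and $\varepsilon$ the one-unit loss $\sigma$, so that convex combinations interpolate correctly. A secondary subtlety is that \eqref{interpolation} only controls increments/remainders that genuinely live in both endpoint spaces for every $s>0$; the hypothesis $\alpha\in[2\sigma,1)$ guarantees that the shifted indices $\alpha-\theta\sigma$, $\alpha-(1+\theta)\sigma-\gamma'$ etc.\ stay in the admissible range $[0,1]$ where the scale and Assumption \ref{ass:intermediate} apply (and $\alpha-2\sigma\ge0$ is exactly what is needed for the reconstructed endpoint bound on $R^y$ in \eqref{apriori_R_gamma}). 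Once the three displays are established, the final assertion follows by specializing to $\sigma=\gamma'=\gamma$, $\theta=1$: \eqref{apriori_dd_y_epsilon} gives $[\dd y]_{\gamma,\alpha-\gamma}\lesssim\|y,y'\|_{\cD^{\gamma,\gamma}_{X,\alpha,\gamma}}$ and \eqref{apriori_R_gamma} gives $[R^y]_{\gamma,\alpha-\gamma}\lesssim(1\vee\rho_\gamma(\X))\|y,y'\|_{\cD^{\gamma,\gamma}_{X,\alpha,\gamma}}$, so adding these two terms to $\|y,y'\|_{\cD^{\gamma,\gamma}_{X,\alpha,\gamma}}$ changes the norm only up to an equivalence constant, which is precisely the claimed equivalence of $\|y,y'\|_{\cD^{\gamma,\gamma}_{X,\alpha,\gamma}}$ and $\|y,y'\|_{\cD^{2\gamma}_{X,\alpha}}$.
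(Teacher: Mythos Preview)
Your overall strategy is correct and coincides with the paper's: the authors simply record that the estimates are ``immediate from the property \eqref{interpolation} and the definition of the \( \cD_{X,\alpha,\sigma}^{\gamma,\gamma'}\)-norm'', i.e.\ exactly the interpolation between endpoint bounds that you describe, including the reconstruction $R^y=\dd y-y'\cdot\dd X$ to produce the second endpoint (which is indeed the source of the factor $(1\vee\rho_\gamma(\X))$).

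There is, however, a bookkeeping slip in your treatment of \eqref{apriori_R_gamma} that you should fix. You say the second endpoint $\beta_1$ is a space of \emph{better} spatial regularity and then propose to reconstruct $R^y$ in $\cB_{\alpha-2\sigma-\gamma'}$, which is \emph{worse} regularity than $\beta_0=\alpha-\sigma-\gamma'$. If one takes the pair $(\alpha-\sigma-\gamma',\alpha-2\sigma-\gamma')$ literally, the geometric mean produces weight $s^{-\varepsilon(2-\theta)}$ and H\"older exponent $\gamma+(1-\theta)\gamma'$, which do not match the target. The correct second endpoint is the reconstruction bound in $\cB_{\alpha-\sigma}$: from $R^y_{t,s}=\dd y_{t,s}-y'_t\cdot\dd X_{t,s}$ one gets $|R^y_{t,s}|_{\alpha-\sigma}\lesssim s^{-\varepsilon}(t-s)^\gamma(1\vee\rho_\gamma(\X))\|y,y'\|$. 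Interpolating between $\cB_{\alpha-\sigma}$ (weight $\varepsilon$, H\"older $\gamma$) and $\cB_{\alpha-\sigma-\gamma'}$ (weight $2\varepsilon$, H\"older $\gamma+\gamma'$) with parameter $\theta$ yields the bound in $\cB_{\alpha-\sigma-\theta\gamma'}$ with exactly the desired weight $\varepsilon(1+\theta)$ and H\"older exponent $\gamma+\theta\gamma'$; one then uses the trivial embedding $\cB_{\alpha-\sigma-\theta\gamma'}\hookrightarrow\cB_{\alpha-(1+\theta)\sigma-\gamma'}$ (valid since $(1-\theta)\gamma'+\theta\sigma\ge0$) to reach the space stated in the lemma. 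The same pair of endpoints $(\alpha-\sigma,\alpha-\sigma-\gamma')$ works for \eqref{apriori_dd_prime}. With this correction, your argument goes through verbatim; note in particular that the hypothesis $\alpha\ge2\sigma$ is not needed for the reconstruction itself (which lives in $\cB_{\alpha-\sigma}$) but rather to ensure the final target indices stay in $[0,1]$.
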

\begin{proof}
This is immediate from the property \eqref{interpolation} and the definition of the \( \cD_{X,\alpha,\sigma}^{\gamma,\gamma'}\)-norm.
\end{proof}

\subsection{Notion of solution and main result}
Let us fix a scale $(\cB_\alpha)_{\alpha\in[0,1]}$ subject to Assumption \ref{ass:intermediate}, $\gamma_0\in(\frac{1}{3},\frac{1}{2})$ and a $d$-dimensional \( \gamma \)-H\"older rough path \( \X\in \mathscr C^{\gamma_0}(0,T;\R^d)\). We now discuss a notion of mild solution for the quasilinear evolution equation
\begin{equation}
\label{rough_PDE_gene}
du_t=(L_t(u_t)u_t+ N_t(u_t))dt+\sum_{i=1}^dF^i(u_t)d\X^i_t\,,
\quad u_0=x\in\cB_\alpha.
\end{equation}
We firstly specify the assumptions on the quasi-linear term \(L(\cdot)\).
\begin{assumption}
	\label{ass:L_y}
Let $0<\eta\le\alpha \leq 1$.
We assume that
\begin{enumerate}[label=(Q\arabic*)]
	\item\label{Q1}
There is an open set $V$ of $\cB_\eta $ such that
for some $\varrho \in(0,1)$
\begin{equation}\label{regularity_L}
[(t,y)\mapsto L_t(y)]\in C^{\varrho ,1-}([0,T]\times V,\cL(\cB_1,\cB_0)).
\end{equation}
This means that there exists a constant $\ell>0$ such that for each $t\in[0,T]$ and every $x,y\in V$
\begin{equation}
	\label{lip_constant}
	|L_t(x)-L_s(y)|_{\cB_1\to \cB_0}\leq \ell [|t-s|^\varrho+|x-y|_{\eta}].
\end{equation}

\item \label{Q2}
For each point $y\in V,$ there exists a neighbourhood $ V_{y}\subset V$ in $\cB_\eta$ and constants $M>0$, $\lambda \in\R$ such that for all $t,y\in [0,T]\times V_{y}$ we have
$\lambda +\Sigma_0 \subset \rho (L_t(y))$ 
and 
\[
|(\zeta -L_t(y))^{-1}|_{\cB_0\to\cB_0}\leq \frac{M}{1+|\zeta -\lambda |}\,,
\]
for every $\zeta \in \lambda +\Sigma_0$,
where $L_t(y)$ is considered as a linear operator in $\cB_0$ with domain $\cB_1.$
\end{enumerate}
\end{assumption}
\begin{definition}
\label{def:var_sol}
Let \( N,F\) be nonlinear terms satisfying suitable assumptions.
We call $u\in \cC(0,\tau,\cB_\alpha)\cap \cC^{\gamma,\varepsilon}(0,\tau,\cB_{\alpha-\sigma})$ 
a \emph{mild solution} of \eqref{rough_PDE_gene} if
\begin{itemize}
\item it holds \( \int _0^T|N_r(u_r)|_{0}dr<\infty\);
\item the pair $(u,F(u))$ defines a controlled rough path in $\cD^{\gamma,\gamma'}_{X,\alpha,\sigma}(0,\tau;\varepsilon)$, where $\varepsilon=(\gamma-\sigma)_{+}$;
\item the path component $u$ satisfies the variation of constants formula
\begin{equation}\label{nota:solution}
	u_t = S_{t,0}u_0 + \int_0^t S_{t,r}N_r(u_r)dr+ \int_0^t S_{t,r} F(u_r) \cdot d\X_r \, , \quad t<\tau\,,
\end{equation}
where the last integral on the right hand side is well defined in the sense of controlled rough paths, see Theorem \ref{thm:integral} below.
\end{itemize}
Given \( x\in \cB_\alpha \) we call \( u \) a mild solution starting at \( x \) if it additionally holds that \( u_0=x \).
\end{definition}

We can now state our main result.
\begin{theorem}
	\label{thm:main}
Fix \( \frac13<\gamma<\gamma_0<\frac12 \), a $d$-dimensional $\gamma_0$-H\"older rough path \(\X=(X,\XX)\in \mathscr C^{\gamma_0}(0,T;\R^d)\) and parameters $\sigma\in [0,\gamma]$, \( \alpha\in (1-\gamma-\gamma_0+2\sigma,1-\gamma+\sigma]\).
Let $L_t(y)$ be as in Assumption \ref{ass:L_y} and consider a non-linearity \( F=(F_1,\dots,F_d)\) which is well-defined and three times continuously differentiable from \( \cB_\beta\to \cB_{\beta-\sigma}\) for each \( \beta\in [\beta_0,\alpha]\) where \( \beta_0 \in (\sigma,\alpha-\sigma-1 +\gamma_0+\gamma)\).
Similarly, let \( \delta\in [0,\alpha)\) and consider a non-linearity \( N\colon [0,T]\times \cB_{\beta}\to\cB_{\beta-\delta},\) \( \beta\in[\alpha, \alpha+\gamma-\sigma] \), such that \( N_t(\cdot) \) is Lipschitz with constant \( C_t \) for each \( t\in [0,T] \) and moreover \( \|N\|_1 =\sup_{t\in [0,T]}C_t<\infty.\) 
We have the following
\begin{itemize}
\item Suppose that \( \alpha<1. \)
Given $x\in \cB_\alpha,$ 
there exists a unique maximal solution to the problem \eqref{problem}.
Moreover, the solution map $(x,\X)\mapsto u$ is continuous from $\cB_{\alpha}\times \mathscr C^{\gamma_0}(0,T;\R^d)$ to $\cC([0,\tau);\cB_{\alpha})\cap \cC^{\gamma,\varepsilon}([0,\tau);\cB_{\alpha-\sigma})$ where \( \varepsilon=\gamma-\sigma\) and
$\tau$ denotes the maximal lifespan of the solution. If \( \tau<T\), we have the alternative that \(\limsup_{t\to\tau}|u_t|_\alpha=\infty\) or there is a \( y\in \partial V\) such that%
\footnote{In the case when the boundary \( \partial V\subset \cB_\eta\) is empty, it is understood as the statement that \( \limsup _{t\to \tau}|u_t|_{\eta}=\infty\).} 
\( u_t\to y\) in \( \cB_\eta\).
\item The same conclusion holds if \( \alpha=1 \), and provided we assume furthermore that
\begin{enumerate}[label=(\roman*)]
\item \( \dd L\in \cC^{\varrho}_2 \left (0,T;C^{1-}\Big(V;\cL(\cB_1,\cB_0)\Big)\right )\);
\item \( L\in \cC_1^0\Big(0,T;C^{2-}\Big(V;\cL(\cB_1,\cB_0\Big)\Big) \), by which we mean that for each \( t\in [0,T] \), \( L_t \colon V\to\cL(\cB_1,\cB_0)\) is continuously Fr\'echet differentiable 
 and \( DL_t \) is Lipschitz continuous, uniformly in \( t \in [0,T]\). 
\end{enumerate}
\end{itemize}
\end{theorem}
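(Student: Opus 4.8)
The plan is to solve~\eqref{rough_PDE_gene} by a contraction argument in a ball of the space $\cD^{\gamma,\gamma'}_{X,\alpha,\sigma}([0,\tau];\varepsilon)$ with $\varepsilon=(\gamma-\sigma)_+$, and then patch local solutions together to obtain a maximal one. First I would fix $y$ in a small ball around the constant path $x$ and \emph{freeze the generator}: let $S^y_{t,s}=\exp(\int_s^t L_r(y_r)\,dr)$ be the parabolic evolution family produced by Assumptions~\ref{ass:L_y}\ref{Q1}--\ref{Q2} via the Amann--Yagi construction, and define the map
\[
\Gamma(y,y')_t := S^y_{t,0}x + \int_0^t S^y_{t,r}N_r(y_r)\,dr + \int_0^t S^y_{t,r}F(y_r)\cdot d\X_r,
\qquad \Gamma(y,y')'_t := F(y_t).
\]
The rough integral on the right is the one furnished by the affine/sewing machinery of Section~\ref{sec:rough_convolutions} (Theorem~\ref{thm:integral}), which also tells us it lands in $\cB_\alpha$ with the right H\"older exponents and with the prescribed weight $t^{-\varepsilon}$ at the origin; the deterministic integral $\int_0^t S^y_{t,r}N_r(y_r)\,dr$ is handled by the usual analytic-semigroup estimates, using $\delta<\alpha$ and $\beta\le\alpha+\gamma-\sigma$ so that the time integrability holds. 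The composition estimates for $F$ in the controlled-path scale (here the hypotheses $F\in C^3(\cB_\beta,\cB_{\beta-\sigma})$ for $\beta\in[\beta_0,\alpha]$ and the constraint $\beta_0>\sigma$ with $\beta_0<\alpha-\sigma-1+\gamma_0+\gamma$ are exactly what is needed so that $(F(y),DF(y)\cdot y')\in\cD^{\gamma,\gamma'}_{X,\alpha,\sigma}$) show $\Gamma$ maps a suitable ball into itself once $\tau$ is small, and the same estimates with differences in place of values give the contraction; Lemma~\ref{lem:weight} provides the base point $(S_{t,0}x,0)\in\cD^{\gamma,\gamma'}_{X,\alpha,\sigma}([0,T];(\gamma-\sigma)_+)$ around which to set up the ball.

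The genuinely quasilinear difficulty — and the step I expect to be the main obstacle — is the dependence of $S^y$ on $y$ in the fixed point and in the continuity statement. One must estimate $S^{y_1}_{t,s}-S^{y_2}_{t,s}$ in operator norms between the scale spaces, with the blow-up weight at $s=0$ correctly accounted for; this is precisely the content of the perturbation estimates in Appendix~\ref{app:perturbation} and Theorem~\ref{thm:perturbation}. The point where the weight $\varepsilon=(\gamma-\sigma)_+$ becomes indispensable is the requirement flagged in the introduction: $\big(L_t(x)-L_s(x)\big)$ applied to the \emph{same} initial datum $x\in\cB_\alpha$ is only $\gamma$-H\"older with values in $\cB_{\alpha-\sigma}$ after inserting the $t^{-\varepsilon}$ weight, so the quantity $S^{y_1}_{t,0}x - S^{y_2}_{t,0}x$ (and its increments) can only be controlled in the weighted norm. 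Feeding these perturbation bounds, together with the $\X\mapsto\int S^y F(y)\,d\X$ continuity from the sewing lemma, through the Lipschitz dependence of $\Gamma$ on $(y,y',x,\X)$ yields both uniqueness on the interval of contraction and joint continuity of the local solution map.

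Next I would upgrade local to maximal: define $\tau$ as the supremum of times up to which a solution in the stated class exists; standard concatenation (restarting at a time $t_0<\tau$ from the value $u_{t_0}\in\cB_\alpha$, which re-enters the hypotheses because $u\in C((0,\tau);\cB_\alpha)$) gives a unique maximal solution, and the continuity of the solution map propagates along the concatenation. The blow-up alternative is then obtained by contradiction: if $\tau<T$, $\limsup_{t\to\tau}|u_t|_\alpha<\infty$, and $u_t$ stays away from $\partial V$ in $\cB_\eta$, then a uniform lower bound on the length of the contraction interval (the contraction time depends only on $\rho_{\gamma_0}(\X)$, on the $\cB_\alpha$-size of the datum, and on the distance to $\partial V$ in $\cB_\eta$, all controlled near $\tau$) lets one extend the solution past $\tau$, a contradiction; when $\partial V=\emptyset$ the same argument forces $\limsup_{t\to\tau}|u_t|_\eta=\infty$.

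For the endpoint case $\alpha=1$, the only thing that changes is the composition/perturbation estimate for $L(\cdot)$: with $\cB_\alpha=\cB_1=D(L(x))$ the bare Lipschitz regularity of $y\mapsto L(y)$ in Assumption~\ref{ass:L_y}\ref{Q1} no longer suffices to differentiate through the fixed point because there is no room to trade regularity, so one needs the extra hypotheses (i) and (ii): joint $\varrho$-H\"older-in-time / $C^{1-}$-in-space control of the increments $\dd L$, and $C^{2-}$ Fr\'echet regularity of $L_t(\cdot)$ uniformly in $t$. These are exactly what is required to run the second-order Taylor expansion of $y\mapsto S^y$ in Appendix~\ref{app:perturbation} at the level $\cB_1$ (rather than at some $\cB_\beta$ with $\beta<1$, which is unavailable), after which the contraction, maximality, and blow-up arguments go through verbatim. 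I do not expect new phenomena here, only heavier bookkeeping in the perturbation lemma.
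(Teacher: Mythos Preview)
Your overall strategy matches the paper's: fixed point in the weighted space $\cD^{\gamma,\gamma'}_{X,\alpha,\sigma}([0,\tau];\varepsilon)$ for the map $\Gamma$, perturbation estimates for $S^y-S^{\bar y}$, then concatenation and a blow-up alternative. However, there is a genuine gap in the contraction step that you gloss over with ``the same estimates with differences in place of values give the contraction''.

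The issue is this: Corollary~\ref{c:perturbation} bounds the quasilinear contribution to $\Gamma(y)-\Gamma(\bar y)$ by a constant times $\Gamma_\alpha(L(y),L(\bar y))$, which for $\alpha<1$ equals $\sup_t|L_t(y_t)-L_t(\bar y_t)|_{1\to0}\lesssim \sup_t|y_t-\bar y_t|_\eta$. This is bounded by the controlled-path norm of $y-\bar y$, but \emph{without any small-time factor}, so you do not get a contraction. The paper fixes this by augmenting the fixed-point norm with an additional \emph{unweighted} H\"older component $[\dd y]_{\varrho,\alpha-\varrho}$ for some $\varrho<\alpha-\eta$, and then exploits $y_0=\bar y_0=x$ together with the interpolation $\cB_\eta\hookleftarrow(\cB_{\alpha-\varrho},\cB_\alpha)_\vartheta$ to obtain
\[
|y_t-\bar y_t|_\eta \lesssim t^{\vartheta\varrho}[\dd(y-\bar y)]_{\varrho,\alpha-\varrho}^{\vartheta}|y-\bar y|_{0,\alpha}^{1-\vartheta},
\]
which supplies the missing power of $T$. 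You neither introduce this extra norm component nor explain where the smallness comes from; without it the argument does not close.

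Two smaller points. First, your description of the $\alpha=1$ case is off: one does not Taylor-expand $y\mapsto S^y$. What changes is that $\Gamma_1(L(y),L(\bar y))$ now contains the H\"older seminorm $[L(y)-L(\bar y)]_{\varrho;1\to0}$ (see \eqref{nota:Gamma} and Lemma~\ref{lem:K}), and since $L_0(y_0)=L_0(\bar y_0)$ the sup-norm term is itself controlled by $T^\varrho$ times this seminorm. Assumptions~(i)--(ii) are used via the mean-value theorem on $L$ (not on $S$) to bound $[\dd L(y)-\dd L(\bar y)]_{\varrho;1\to0}$ by $|y-\bar y|_{0,\eta}+[\dd(y-\bar y)]_{\varrho,\alpha-\varrho}$, after which the same augmented-norm trick closes the contraction. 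Second, since $F$ and $N$ are only locally bounded and $L(\cdot)$ is only defined on the open set $V\subset\cB_\eta$, one first needs a truncation step (cutoffs $F_R,N_R$ and a projected operator $L^{R'}$) before the bounded-coefficient fixed point applies; this is Section~\ref{ssec:cast_aside_bd} in the paper and is not mentioned in your proposal.
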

As a particular application of our results when \( d=3 \) and \( \X\) is a geometric rough path with the regularity of the Brownian motion, we obtain existence and uniqueness for the following Landau-Lifshitz-Gilbert equation with quadratic rough input
\[
\left \{\begin{aligned}
& du - (\Delta u + u\times \Delta u) dt = u|\nabla u|^2dt + u\times d\X -\epsilon u\times (u\times d\X),\quad \text{on } [0,T]\times \dom,
\\&
u_0\quad \text{given in }W^{2\alpha,p}(\dom;\R^3)\quad \text{with }|u_0(x)|=1\text{ a.e.,}
\end{aligned}\right .
\]
where \( \epsilon\in \{0,1\}\), the domain
\( \dom \subset \R^n\) is either bounded with smooth boundary or the whole space \( \R^n \), and the boundary conditions are either periodic, Neumann-homogeneous or void.
In this case the maximal lifespan \( \tau\in (0,T]\) is characterized by the property that \( t=T\) or \( \tau<T\) and \( \limsup_{t\to\tau} |u_t|_{W^{2\alpha,p}} =\infty\) for any \( \alpha >\frac{1}{2}+\frac{n}{2p}\).
In a random setting when \( \X \) is a rough path obtained from the Stratonovitch enhancement of a three-dimensional Brownian motion and for \( \dom=(-1,1)\) (with periodic boundary conditions), we will see in Section \ref{sec:stochastic_examples} how our results permit to obtain existence of a stochastic flow for the corresponding stochastic LLG equation with linear noise (i.e.\ when \( \epsilon=0\)), in the case when \( p=2 \) and \( \alpha=1 .\)

\subsection{A lemma on dyadic approximations}

Before closing this preliminary section, we state an important lemma on dyadic approximation which can be seen as a generalization of an argument in \cite{gerasimovics2019hormander} used to approximate certain Riemann-type integrals via dyadic partitions.

Herein, given \( (s,t)\in \Delta_2\) and \( n=0,1,\dots\), we introduce the corresponding \( n\)-th dyadic partition:
\begin{equation}\label{dyadic}
	\pi_n(s,t)=
	\{s\le s+2^{-n}(t-s)\le \dots \le t-2^{-n}(t-s) \le t \}\,.
\end{equation}

\begin{lemma}\label{lem:dyadic}
	Let \( T\le 1\) and 
	suppose that \( \mathscr J_{t,s}=\lim_{n\to \infty} \sum_{[u,v]\in\pi_n(s,t)}J_{u,v} \) for some \( J\in \cC_2(0,T;\cX)\). Let \( l\in \mathbf N\),
	assume the existence of real numbers \( \mu_i>1\), \( 0\le\lambda_i<\mu_i\), \( \nu_i \in [0,\mu_i]\) and \( \varepsilon_i \in [0,1)\), as well as constants \( A_1,\dots ,A_l\ge0\) for \( i=1,\dots ,l \), such that
	\begin{equation}
		\label{hyp:dyadic}
		|\dd J_{v,m,u}| _{\cX} 
		\le \sum_{i=1}^lA_i|t-u|^{-\lambda_i}|u-m|^{\nu_i}|m-v|^{\mu_i-\nu_i }m^{-\varepsilon_i}\,.
	\end{equation}
	for any \(0\le s\le u\le m\le v\le t\le T\) with \( m>0.\)
	Then, assuming that \( \omega\in[0,\vee_{i=1}^l \varepsilon_i]\) is chosen so that
	\( (\varepsilon_i-\omega)_+<\mu_i-\lambda_i\) for each \( i=1,\dots, l\), one has the estimate
	\[
	s^{\omega}|\mathscr J_{t,s}-J_{t,s}|_{\cX} \lesssim \sum_{i=1,\dots ,l}A_i |t-s|^{\mu_i-\lambda_i-(\varepsilon_i-\omega)_+}\,.
	\]
\end{lemma}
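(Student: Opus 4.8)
The plan is to reduce the claim to a telescoping estimate along the dyadic refinements and then sum a geometric-type series, carefully tracking the weight at the origin. First I would set $J^{(n)}_{t,s}:=\sum_{[u,v]\in\pi_n(s,t)}J_{u,v}$, so that $\mathscr J_{t,s}=\lim_{n\to\infty}J^{(n)}_{t,s}$ and $J^{(0)}_{t,s}=J_{t,s}$. Passing from the $n$-th to the $(n+1)$-th dyadic partition splits each subinterval $[u,v]$ of $\pi_n(s,t)$ at its midpoint $m=\tfrac{u+v}{2}$, so that
\[
J^{(n+1)}_{t,s}-J^{(n)}_{t,s}=-\sum_{[u,v]\in\pi_n(s,t)}\dd J_{v,m,u}\,,
\]
and hence $\mathscr J_{t,s}-J_{t,s}=-\sum_{n\ge0}\sum_{[u,v]\in\pi_n(s,t)}\dd J_{v,m,u}$ whenever the right-hand side converges absolutely in $\cX$. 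The whole proof then amounts to estimating $\sum_{n\ge0}\sum_{[u,v]\in\pi_n(s,t)}|\dd J_{v,m,u}|_{\cX}$ using the hypothesis \eqref{hyp:dyadic} term by term in $i$, so I would fix one index $i$ from now on and write $A=A_i$, $\lambda=\lambda_i$, $\mu=\mu_i$, $\nu=\nu_i$, $\varepsilon=\varepsilon_i$.

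For a fixed $n$, the subintervals $[u,v]\in\pi_n(s,t)$ all have length $|v-u|=2^{-n}(t-s)$, the midpoints satisfy $|u-m|=|m-v|=2^{-n-1}(t-s)$, and the left endpoints are $u=u_k=s+k2^{-n}(t-s)$ for $k=0,\dots,2^n-1$. Plugging into \eqref{hyp:dyadic} and using $|u-m|^{\nu}|m-v|^{\mu-\nu}=(2^{-n-1}(t-s))^{\mu}$, I get
\[
\sum_{[u,v]\in\pi_n(s,t)}|\dd J_{v,m,u}|_{\cX}\le A\,(2^{-n-1}(t-s))^{\mu}\sum_{k=0}^{2^n-1}|t-u_k|^{-\lambda}\,m_k^{-\varepsilon}\,,
\]
with $m_k=u_k+2^{-n-1}(t-s)$. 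The two weights are handled separately. For the factor $|t-u_k|^{-\lambda}=((2^n-k)2^{-n}(t-s))^{-\lambda}$ one compares the sum over $k$ to an integral: $\sum_{k=0}^{2^n-1}((2^n-k)2^{-n}(t-s))^{-\lambda}\lesssim 2^{n}\,(t-s)^{-\lambda}$ when $\lambda<1$ and $\lesssim 2^{n\lambda}(t-s)^{-\lambda}$ in general (the precise exponent is $\max(1,\lambda)$, but $\lambda<\mu$ and $\mu>1$ will absorb it). The origin weight $m_k^{-\varepsilon}$ is the delicate one: for those $k$ with $m_k\ge s$ (in fact all of them, since $m_k\ge s$) one uses $m_k\ge s+k2^{-n}(t-s)\ge \max(s, k2^{-n}(t-s))$. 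Splitting the sum at $k\sim 2^n s/(t-s)$, the terms with small $k$ contribute $\lesssim (s/(t-s))\,2^n\cdot s^{-\varepsilon}$ and those with large $k$ contribute $\lesssim 2^n (t-s)^{-\varepsilon}\cdot\frac{1}{1-\varepsilon}(\text{something})$; combining, one obtains a bound of the form $2^n\,s^{-(\varepsilon-\omega)_+}\,(t-s)^{-\omega}$ after multiplying through by $s^{\omega}$, for any admissible $\omega$. The key algebraic point is that $s^{\omega}m_k^{-\varepsilon}\le s^{\omega-\varepsilon}$ when $\varepsilon\ge\omega$ (giving the $s^{-(\varepsilon-\omega)_+}$, here absorbed into $(t-s)^{-(\varepsilon-\omega)_+}$ after noting $s\le t-s$ is false in general — so one genuinely keeps $s^{-(\varepsilon-\omega)}$ and then bounds $s\le t$... ) — this bookkeeping is exactly where care is needed, and I return to it below.

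Putting the two weight estimates together, the $n$-th slice is bounded by
\[
A\,2^{-n\mu}(t-s)^{\mu}\cdot C\,2^{n\max(1,\lambda)}(t-s)^{-\lambda}\cdot 2^{n}\,s^{-(\varepsilon-\omega)_+}(t-s)^{-\omega}\,,
\]
so after multiplying by $s^{\omega}$ the sum over $n$ is a geometric series with ratio $2^{-n(\mu-\max(1,\lambda)-1)}\cdot(\text{weight ratio})$; the hypotheses $\mu>1$, $\lambda<\mu$ and the chosen $\omega$ with $(\varepsilon-\omega)_+<\mu-\lambda$ are precisely what make the exponent of $2^{-n}$ strictly positive, so the series converges and is $\lesssim A\,(t-s)^{\mu-\lambda-(\varepsilon-\omega)_+}$. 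Summing over $i=1,\dots,l$ gives the claimed estimate; the same absolute bound also justifies the interchange of limit and sum used at the start. I expect the main obstacle to be the precise treatment of the origin weight $m_k^{-\varepsilon}$: one must show that the dyadic sum $\sum_k m_k^{-\varepsilon}$, after the $s^{\omega}$ weight is applied, produces the exponent $(\varepsilon-\omega)_+$ on $(t-s)$ and nothing worse — in particular that no logarithmic loss appears at the threshold $\varepsilon-\omega=\mu-\lambda$ (excluded by the strict inequality) and that the case $u=s=0$ (where $m_k=k2^{-n}(t-s)$ and the smallest term $k=0$ has $m_0=0$, but then $u=m=0$ so $\dd J_{v,m,u}$ with $m=0$ is simply excluded by the hypothesis's proviso $m>0$, or more precisely the $k=0$ midpoint $m_0=2^{-n-1}(t-s)>0$ is fine) is handled correctly. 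Once that lemma-within-the-proof is in place, the rest is routine geometric summation.
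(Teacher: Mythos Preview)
Your telescoping setup and the reduction to a single index $i$ are fine, but the core estimate has a genuine gap. After extracting $(2^{-n-1}(t-s))^{\mu}$ you are left with the \emph{single} sum $\sum_{k=0}^{2^n-1}|t-u_k|^{-\lambda}m_k^{-\varepsilon}$, yet you bound it by the \emph{product} of two separate sums, one for each weight. This overcounts by a factor $2^{n}$: your displayed $n$-th slice carries $2^{-n\mu}\cdot 2^{n\max(1,\lambda)}\cdot 2^{n}$, so the geometric ratio is $2^{-(\mu-1-\max(1,\lambda))}$. The hypotheses only give $\mu>1$ and $\lambda<\mu$; for instance $\mu=\tfrac32$, $\lambda=\tfrac13$, $\varepsilon=0$ satisfies them but makes your series diverge. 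The handling of the origin weight is also left unresolved (you note yourself that ``$s\le t-s$ is false in general'' and do not recover from it).

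The missing device, which the paper uses, is twofold. First, the elementary pointwise bounds $|t-u|^{-\lambda}\le |t-m|^{-\lambda}$ and $s^{\omega}m^{-\varepsilon}\le (m-s)^{-(\varepsilon-\omega)_+}$ (the latter from $s\le m\le T\le 1$) relocate the two singularities to $r=t$ and $r=s$. Second, instead of summing the weights separately, one keeps \emph{one} factor of the mesh size $|u-v|$ inside the sum: write each summand as
\[
A\,(2^{-n-1}|t-s|)^{\mu-1-\varkappa}\,|t-m|^{\varkappa-\lambda}\,|m-s|^{-(\varepsilon-\omega)_+}\,|u-v|
\]
for a small $\varkappa\in[0,\mu-1)$ chosen with $\varkappa>\lambda-1$. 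The sum over $[u,v]\in\pi_n$ is then a Riemann sum dominated by the convergent Beta-type integral $\int_s^t|t-r|^{\varkappa-\lambda}|r-s|^{-(\varepsilon-\omega)_+}dr\lesssim (t-s)^{1+\varkappa-\lambda-(\varepsilon-\omega)_+}$, and the geometric factor $2^{-n(\mu-1-\varkappa)}$ is summable by construction. This yields exactly $A(t-s)^{\mu-\lambda-(\varepsilon-\omega)_+}$ with no spurious powers of $2^{n}$.
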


\begin{proof}
	Let \( n\ge0\), \( (s,t)\in \Delta_2\) and take any \( [u,v]\in \pi(s,t)\). By assumption, we have that
	\[
	\begin{aligned}
		\mathscr J_{t,s}
		&=\lim_{n\to \infty} \sum_{[u,v]\in \pi_n}J_{u,v}
		\\
		&=\sum_{n=0}^\infty (J^{n+1}_{t,s}-J^n_{t,s})\,,
	\end{aligned}
	\]
	where we introduce the partial sum \( J^n_{t,s}=\sum _{[u,v]\in \pi_n(s,t)}J_{u,v}.\)
	Next, one observes for each $n\ge0$ the algebraic identity
	\begin{equation}\label{algebraic_id_app}
		\begin{aligned}
			J^{n+1}-J^n
			&= 
			\sum_{\substack{[u,v]\in\pi_n\\ m=\frac{u+v}{2}}}\dd J_{v,m,u}\,.
		\end{aligned}
	\end{equation}
	In particular, the fact that the previous identity is linear in \( \dd J \) allows one to assume that \( l=1 \) without any loss of generality.
	In this case, if we denote by \( \mu=\mu_1,\lambda=\lambda_1 ,\nu=\nu_1\), \( \varepsilon=\varepsilon_1\), \( A=A_1 \),
	we observe thanks to \eqref{hyp:dyadic} and the triangle inequality, that
	\begin{equation}\label{sum}
		\begin{aligned}
			s^{\omega}|J^{n+1}-J^n|_{\cX}
			&\le 
			\sum_{\substack{[u,v]\in\pi_n\\ m=\frac{u+v}{2}}}
			\Big( A|t-u|^{-\lambda}|v-m|^{\nu}|m-u|^{\mu-\nu}m^{-\varepsilon}s^{\omega}\Big).
		\end{aligned}
	\end{equation}
	Note that \( |t-u|^{-\lambda}\leq |t-m|^{-\lambda}\) and that if \( \omega\in [0,\varepsilon]\) we have similarly \( m^{-\varepsilon}s^{\omega}\le |m-s|^{-(\varepsilon-\omega)_+}\). Moreover,
	if \( [u,v]\in \pi_n(s,t)\), then \( |m-u|=|v-m|=\frac12|u-v|=2^{-n-1}|t-s|\). Hence, we see that each
	term appearing in the summands of \eqref{sum} can be estimated from above by
	\[
	A(2^{-n-1}|t-s|)^{\mu-1}|t-m|^{-\lambda} |m-s|^{-(\varepsilon-\omega)_+}|u-v|\,.
	\]
	which is also bounded by
	\[
	A(2^{-n-1}|t-s|)^{\mu-1-\varkappa}|t-m|^{-\lambda+\varkappa}|m-s|^{-(\varepsilon-\omega)_+} |u-v|\,.
	\]
	for any \( \varkappa\in [0,\mu-1).\)
	Let us choose the latter so that \(\varkappa -\lambda>-1 \) (which is possible by the assumption that \( \lambda<\mu\)).
	Summing over $[u,v]\in\pi_n$ gives by comparison of the Riemann sum with its integral (the integrand is convex)
	\[
	\begin{aligned}
		s^{\omega}|J^{n+1}-J^n|_{\cX}
		&\leq A2^{-n(\mu-1-\varkappa)}|t-s|^{\mu-1-\varkappa}\sum _{[u,v]\in \pi_n}|t-m|^{\varkappa-\lambda}|m-s|^{-(\varepsilon -\omega)_+}|u-v|
		\\
		&\leq A2^{-n(\mu-1-\varkappa)}|t-s|^{\mu-1-\varkappa}\int_s^t|t-r|^{\varkappa-\lambda}|r-s|^{-(\varepsilon-\omega)_+}dr
		\\
		&\leq A2^{-n(\mu-1-\varkappa)}|t-s|^{\mu-\lambda-(\varepsilon-\omega)_+}\,\mathrm B(\varkappa-\lambda,1-(\varepsilon-\omega)_+)\,,
	\end{aligned}
	\]
	by the change of variable \( r=s+\tau (t-s) ,\) \( \tau\in [0,1] \), where we recall the definition of the Euler Beta function \(\mathrm B(x,y)=\int _0^1\theta^{x-1}(1-\theta)^{y-1} d\theta\), for each \( x,y>0.\)	 
	One gets the desired estimate by summing over $n\ge0.$
\end{proof}

\section{Operations on rough controlled paths}
\label{sec:rough_convolutions}

\subsection{Functional analytic framework}
Fix two Banach spaces $\cX_1\hookrightarrow\cX$. 
We recall the definition of an evolution family on $\cX$ in the sense of \cite{amann1986quasilinear,Yagi1} (or ``propagator'' \cite{gerasimovics2020non}).
\begin{definition}
	\label{def:propagator}
	Let $(L_t)_{t\in[0,T]}$ be a family of unbounded, closed operators with constant domain  $D(L_t)\equiv\mathcal X_1\hookrightarrow \cX$.
We call $S\colon \Delta _2\to \cL(\cX)$ the \emph{evolution family} associated to \( (L_t)_{t\in[0,T]} \) if and only if
	\begin{enumerate}[label=(P\arabic*)]
		\item\label{P1} $S\in \cC_2(0,T;\cL_s(\cX))$ and there exist constants $M ,\lambda>0$ such that for every $(t,s) \in \Delta_2$:
		\begin{equation}
			\label{growth_S}
			|S_{t,s}|_{\cX\to\cX}
			\leq M e^{\lambda(t-s)}\;.
		\end{equation}
		Here $\cL_s(\cX)$ denotes the strong operator topology on $\cX$.
		\item\label{P2} \( S \) is multiplicative, namely $S_{t,t}=\id$ and $S_{t,s}=S_{t,u }S_{u ,s}$ for $(t,u ,s)\in\Delta _3$. 
		
		\item\label{P3} For all $s,t \in [0,T]$ and $x \in \mathcal X_1$ we have
		\( \frac{\partial }{\partial t}S_{t,s}x = L_tS_{t,s}x \) and  \( \frac{\partial}{\partial s}S_{t,s}x=-S_{t,s}L_sx\)
		(where the differentiation is taken in the Banach space $\cX$).
		\item\label{P4} For every $(t,s)\in \Delta_2,$ $s\neq t$ we have that $S_{t,s} \cX \subset \mathcal X_1$ and for some constant $\|S\|_{(0,1)}>0$
		\begin{equation}
			\label{smoothing_S_prelim}
|S_{t,s}|_{\cX\to\cX_1} \leq  \|S\|_{(0,1)}|t-s|^{-1}.
		\end{equation}
	\end{enumerate}
\end{definition}
\begin{remark}
	\label{rem:constant_K}
	For $(L_t)_{t\in [0,T]}$  as above, the following additional property is well-known (see \cite{amann1986quasilinear}):
	\begin{enumerate}[label=(P4*)]
		\item\label{P4star} There exists a constant $\|S-I\|_{(1,0)}>0$ such that for every $(t,s)\in \Delta_2,$
		\begin{equation*}
			\label{anti_smoothing_S_raw}
			|S_{t,s}-\id|_{\cX_1\to\cX} \leq \|S-I\|_{(1,0)} |t-s|\,.
		\end{equation*}
	\end{enumerate}
\end{remark}
\begin{remark}
Consider a scale \( (\cB_\beta)_{\beta\in [0,1]} \) subject to Assumption \ref{ass:intermediate} and let \( (L_t)_{t\in [0,T]}\) be as above with \( \cX=\cB_0\) and \( \cX_1=\cB_1 .\)
By interpolation, there exists a constant \( K= K(M,T,\|S\|_{(0,1)}) \) such that for each \( 0\le \alpha,\beta\le 1 \)
\begin{equation}
\label{smoothing_S}
|S_{t,s}|_{\alpha\to\beta} \leq  K|t-s|^{-(\beta-\alpha)_+}\,.
\end{equation}
Similarly, there exists $\tilde K(M,\varrho,T,\|S-I\|_{(1,0)})>0$ such that for every $(t,s)\in \Delta_2,$
\begin{equation}
	\label{anti_smoothing_S}
	|S_{t,s}-\id|_{\alpha\to\beta} \leq \tilde K(M,\rho,T) |t-s|^{(\alpha-\beta)_+}\,.
\end{equation}
\end{remark}

If one is working with a constant family \( L_t\equiv L \), the corresponding propagator is the semigroup \( S_{t,s}=\exp((t-s)L) \), which in the analytic case is given for any \( \tau>0 \) by the contour integral formula
\begin{equation}
\label{contour_integral}
\exp(\tau L)=\frac{1}{2\pi i}\int _{\Gamma} e^{\zeta \tau}(\zeta - L)^{-1}d\zeta,
\end{equation}
where \( \Gamma\subset \C \) is as in \eqref{frac_power}.
The following well-known conditions guarantee that a given family of non-autonomous operators with constant domain generates a propagator (see Tanabe \cite{tanabe1960equations}).

\begin{assumption}\label{ass:L_t}
	Let $\cX_1\hookrightarrow \cX$ be Banach spaces. 
	Assume that $(L_t)_{t\in[0,T]}$ is a family of closed, densely defined linear operators on $\cX$ with constant domain $\cX_1$. Moreover, there exist constants $C,M>0$ (depending only on $T$) such that:
	\begin{enumerate}[label=(L\arabic*)]
		\item\label{L1} For each $t\in[0,T]$, the resolvent $\rho (L_t)$ contains $\Sigma_\vartheta$ and there exists a constant $M>0$ such that 
		\begin{equation}\label{resolvent}
		|(\zeta  - L_t)^{-1}|_{\cX\to \cX}\leq \frac{M}{1+|\zeta |},
		\quad \forall\, \zeta \in\Sigma_\vartheta
		\end{equation}
		
		\item\label{L3} there is a number \( \varrho>0 \) such that \( L\in C^\varrho([0,T];\cL(\cX_1,\cX)) \), namely
		\( |L_t-L_s|_{\cX_1\to\cX}\leq \ell|t-s|^\varrho \) for some constant \( \ell>0 \) independent of \( s\le t\in [0,T] .\)
	\end{enumerate}
\end{assumption}

\begin{remark}
	\label{rem:evolution}
Under Assumption \ref{ass:L_t},
the constants \( K,\tilde K\) in \eqref{smoothing_S}-\eqref{anti_smoothing_S} can be chosen to depend only on the parameters \( \ell,M,T,\rho \). This is indeed a well-known consequence of the representation formula \eqref{contour_integral} and the construction of the propagator \( S \) as the unique solution of the integral equations
\begin{equation}
\label{first_int}
S= a + S\star (\dd L. a)
\end{equation}
\begin{equation}
\label{second_int}
S = b - (b.\dd L)\star S\,,
\end{equation}
where for  \( (s,t)\in \Delta_2 \), \( \dd L_{t,s}=L_t-L_s \), \(a_{t,s}=e^{(t-s)L_s}\), \(b_{t,s}=e^{(t-s)L_t}\) and \( (f\star g)_{t,s}:=\int_s^tf_rg_rdr\).
We refer to \cite[Sec.~2]{amann1986quasilinear} for details about this construction.

Throughout this manuscript, we denote the evolution family \( S_{t,s}\), which is implicitly determined by \eqref{first_int}-\eqref{second_int} by \( \exp(\int _s^tL_rdr),\) \( (s,t)\in \Delta_2\). 
\end{remark}

\subsection{Rough integration with respect to an evolution family}
In the following we denote by $(\cB_\beta)_{\beta\in [0,1]}$ a scale of Banach spaces subject to Assumption \ref{ass:intermediate}. We firstly state important results regarding the existence and regularity of the rough convolution $\int_{s}^{t}S_{t,s}y_s\cdot d\X_s$, which is given as the limit in a suitable Banach space 
of compensated Riemann-sums of the form
\( I^{\pi}_{t,s}=\sum_{[u,v] \in \pi}S_{t,u}(y_{v}\cdot \dd X_{v,u} + y'_{v}:\xx_{v,u})\)
where we denote 
\[
\dd X^{\otimes2;i,j}_{v,u}= \dd X^i_{v,u}\dd X^j_{v,u}, \quad 
1\le i,j\le d,
\]
while \( \pi=\{[u_1,v_1],\dots, [u_n,v_n]\}\) is a generic partition of \( [s,t]\) with \( n=\#\pi<\infty. \)

\begin{remark}
	Comparing our framework with~\cite{gerasimovics2019hormander,gerasimovics2020non}, where the rough convolution was defined for a controlled rough path $(y,y')\in\cD^{2\gamma}_{X,\alpha}$ as
	\begin{align*}
	\int_0^t S_{t,s}y_s\cdot d\X_s=\lim\limits_{|\pi|\to 0} \sum\limits_{[u,v]\in\pi} S_{t,u}(y_u \cdot\delta X_{v,u} + y'_u : \mathbb{X}_{v,u} ),
	\end{align*}
	we immediately observe that the term $\xx$ is necessary, since we consider the end-point of the partition $v$ instead of $u$. This is natural since we additionally incorporate a weight in the space of controlled rough paths. 
\end{remark}
An important property which is satisfied by controlled paths is that rough integration (with respect to a given evolution family) is a well-defined operation. Moreover it improves spatial regularity by any number strictly less than the H\"older regularity of $\X$.

\begin{theoremdef}\label{thm:integral}
		Let $\X = (X,\XX) \in \mathscr C^{\gamma_0}(0,T;\R^d)$ for some $\gamma_0\in(\frac{1}{3},\frac{1}{2})$, and fix parameters \( \sigma\in[0,\gamma_0)\), \( \beta\in (\sigma,1] \), \( \gamma\in (\frac13,\gamma_0] ,\)  \( \gamma'\in (1-\gamma_0-\gamma,\gamma]\) and \( \varepsilon=(\gamma-\sigma)_+.\)
		Let \( (\zeta,\zeta^{\prime}) \in (\cD^{\gamma,\gamma'}_{X,\beta ,\sigma}([0,T];\varepsilon))^d\).
		 We call  \textit{rough convolution} the one-parameter quantity
		\begin{equation}\label{rough:int:map}
			\begin{aligned}
		t\mapsto\int_0^t S_{t,r}\zeta_r\cdot d\X_r 
		=\lim_{|\pi|\to0}\sum_{[u,v]\in \pi} S_{t,u}(\zeta_v\cdot \dd X_{v,u} + \zeta'_v:(\XX_{v,u}-\dd X^{\otimes2}_{v,u})),\quad t\in(0,T],
			\end{aligned}
		\end{equation}
where \( \pi=\pi(t)\) denotes a generic finite partition of \( [0,t]\) with mesh-size \( |\pi|=\max_{[u,v]\in \pi}|v-u|\), and the limit is taken in \(\cC^{\gamma,\varepsilon}(\cB_{\beta}).\)
 Furthermore, the rough integral is uniquely characterized by the following two properties:
		\begin{enumerate}[label=(I-\arabic*)]
			\item\label{itm:delta_S} (multiplicativity) we have \( \dd^{S}\int S_{\cdot,r}\zeta_r\cdot d\X_r\equiv 0 \), i.e.
			 \[ \int_s^t S_{t,r}\zeta_r\cdot d\X_r=\int_u^t S_{t,r}\zeta_r\cdot d\X_r + S_{t,u}\int_s^u S_{u,r}\zeta_r\cdot d\X_r,\quad \quad  \forall (s,u,t)\in \Delta_3;\]
			\item\label{itm:integ_remainder} (remainder estimate) introducing the \emph{integral remainder}
		\( \mathscr R^{S;\zeta}_{t,s}:= \int_s^tS_{t,u}\zeta_u\cdot d\X_u - S_{t,s}(\zeta_t \cdot \dd X_{t,s}+ \zeta'_t:(\XX_{t,s}-\dd X^{\otimes2}_{t,s}))\),
 for any $(s,t) \in \Delta_2$, the following estimate holds if \( \iota\in [0,2\varepsilon] \), \( \kappa+\iota\in [-\sigma,\gamma_0+\gamma-\sigma) \)  and \( T\le 1\):
			\begin{equation} \label{e:integration2}
			s^{2\varepsilon-\iota}\Big| \mathscr R^{S;\zeta}_{t,s} \Big|_{\beta+\kappa} \lesssim (1\vee K\vee\tilde K)^2 \rho_{\gamma_0}(\X) \|\zeta,\zeta'\|_{\cD^{\gamma,\gamma'}_{X,\beta,\sigma}(\varepsilon)}
			 |t-s|^{\gamma_0+\gamma -\sigma -\kappa-\iota}.
			\end{equation} 
		\end{enumerate} 	
\end{theoremdef}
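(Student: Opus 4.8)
\textbf{Proof proposal for Theorem and Definition \ref{thm:integral}.}

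The plan is to construct the rough convolution via a variant of the sewing lemma adapted to the evolution family \(S\), relying crucially on the dyadic approximation result of Lemma \ref{lem:dyadic}. First I would introduce the candidate germ \(\Xi_{t,s}:= S_{t,s}\big(\zeta_s\cdot\dd X_{t,s} + \zeta'_s:(\XX_{t,s}-\dd X^{\otimes2}_{t,s})\big)\), together with the ``endpoint'' version appearing in \eqref{rough:int:map} in which \(\zeta_s,\zeta'_s\) are replaced by \(\zeta_v,\zeta'_v\) at the right endpoint of each subinterval. The two choices differ only by terms involving \(\dd\zeta_{v,u}\cdot\dd X_{v,u}\) and \(\dd\zeta'_{v,u}:\XX_{v,u}\), which are of sufficiently high time-order (using \( \gamma'>1-\gamma_0-\gamma\)) to not affect the limit. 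I would then compute the discrete defect \(\dd^{S}\Xi_{t,m,u}\): expanding with Chen's relation \eqref{chen}, the definition of the remainder \(R^\zeta\) from Definition \ref{def:controlled}\ref{D2}, and the identity \eqref{id_delta_S} to convert \((S_{t,m}-I)\) factors into powers of \((t-m)\) via \eqref{anti_smoothing_S}, one finds that \(\dd^S\Xi_{t,m,u}\) is a finite sum of terms each bounded by an expression of exactly the shape \eqref{hyp:dyadic} in Lemma \ref{lem:dyadic}: a negative power \(|t-u|^{-\lambda}\) coming from the smoothing estimate \eqref{smoothing_S} (needed when \(\kappa<0\), i.e.\ when one genuinely gains spatial regularity), positive powers of \(|u-m|\) and \(|m-v|\) summing to some \(\mu_i>1\) coming from the Hölder norms of \(\dd\zeta\), \(\dd\zeta'\), \(R^\zeta\) and from Chen, and a factor \(m^{-\varepsilon_i}\) coming from the weighted norms with \(\varepsilon_i\in\{\varepsilon,2\varepsilon\}\).

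Once this bound is in place, Lemma \ref{lem:dyadic} (applied with \(\omega=2\varepsilon-\iota\), which one must check satisfies \((\varepsilon_i-\omega)_+<\mu_i-\lambda_i\) for every contributing term — this is where the constraints \(\iota\in[0,2\varepsilon]\) and \(\kappa+\iota\in[-\sigma,\gamma_0+\gamma-\sigma)\) enter) shows simultaneously that the dyadic sums \(\sum_{[u,v]\in\pi_n(s,t)}\Xi_{u,v}\) converge as \(n\to\infty\) to a limit \(\mathscr J_{t,s}\) in \(\cB_{\beta+\kappa}\) weighted by \(s^{\omega}\), and that \(s^{2\varepsilon-\iota}|\mathscr J_{t,s}-\Xi_{t,s}|_{\beta+\kappa}\lesssim |t-s|^{\gamma_0+\gamma-\sigma-\kappa-\iota}\), which is precisely \eqref{e:integration2} once one identifies \(\mathscr J_{t,s}=\int_s^t S_{t,r}\zeta_r\cdot d\X_r\) and absorbs the operator-norm constants \((1\vee K\vee\tilde K)^2\) and the rough-path size \(\rho_{\gamma_0}(\X)\) that appear linearly/quadratically in the \(A_i\). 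Taking \(\kappa=-\sigma\), \(\iota=0\) (or \(\kappa=\iota=0\) when \(\sigma=0\)) and noting \(\gamma\le\gamma_0\) gives membership of \(t\mapsto\int_0^t S_{t,r}\zeta_r\cdot d\X_r\) in \(\cC^{\gamma,\varepsilon}(\cB_\beta)\); that the limit over arbitrary partitions \(|\pi|\to0\) agrees with the dyadic limit is a standard consequence of the same germ estimates (a generic partition can be compared to its dyadic refinements). For the multiplicativity \ref{itm:delta_S}, I would observe that \(\dd^S\big(\sum_{\pi_n}\Xi\big)_{t,m,s}\to 0\) along dyadic refinements adapted to a given intermediate point \(m\), so the limit \(\mathscr J\) satisfies \(\dd^S\mathscr J\equiv0\); and uniqueness follows because any two \(S\)-additive two-parameter maps with the remainder bound \eqref{e:integration2} (for some admissible exponents) have a difference that is an \(S\)-increment with a super-linear weighted Hölder bound, hence vanishes by the uniqueness part of the affine sewing argument (again an instance of Lemma \ref{lem:dyadic} with \(l=1\), \(\mu-\lambda>1\)).

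The main obstacle I anticipate is the bookkeeping in the defect estimate for \(\dd^S\Xi_{t,m,u}\): one must carefully separate the several sources of negative powers (the smoothing \eqref{smoothing_S} versus the weights \(m^{-\varepsilon_i}\)) and verify that in every term the \emph{net} time-exponent \(\mu_i-\lambda_i\) strictly exceeds \((\varepsilon_i-\omega)_+\) across the whole admissible parameter window, since this is exactly the condition that makes the dyadic series summable and is the rough-path analogue of the usual sewing threshold ``\(>1\)''. A secondary subtlety is the interchange between the right-endpoint germ used in \eqref{rough:int:map} and the left-endpoint germ natural for sewing; establishing that the difference is negligible in the limit uses the Hölder regularity of \(\zeta'\) (exponent \(\gamma'\)) together with the assumption \(\gamma'>1-\gamma_0-\gamma\), and the weighted norms force one to track the factor \(m^{-2\varepsilon}\) carefully through this comparison as well. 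All remaining estimates — continuity in the partition mesh, identification of the \(\cC^{\gamma,\varepsilon}\) norm, and the constant dependence — are routine once the defect bound is written in the form \eqref{hyp:dyadic}.
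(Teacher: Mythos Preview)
Your proposal is correct and follows essentially the same approach as the paper: one writes the germ \(J_{v,u}=S_{t,u}(\zeta_v\cdot\dd X_{v,u}+\zeta'_v:\xx_{v,u})\), computes \(\dd J_{v,m,u}\) via Chen's relation and the multiplicativity of \(S\), obtains four terms (the two ``Chen'' terms involving \(R^\zeta\) and \(\dd\zeta'\), and the two ``\((S_{m,u}-I)\)'' terms acting on \(\zeta_v\cdot\dd X\) and \(\zeta'_v:\xx\)), estimates each in the form \eqref{hyp:dyadic}, and applies Lemma~\ref{lem:dyadic} with \(\omega=2\varepsilon-\iota\). The paper is slightly more economical in that it works directly with the right-endpoint germ (so your discussion of the left-endpoint germ \(\Xi\) and its comparison to the right-endpoint version is unnecessary here), and it dispatches existence, multiplicativity, uniqueness and the passage from dyadic to arbitrary partitions by reference to \cite[Thm~4.5]{gerasimovics2020non} rather than rederiving them.
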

\begin{remark}
\label{rem:uniqueness}
Uniqueness actually holds in the following broader sense. 
Suppose that a Banach space \( (\cZ , |\cdot|_{\cZ})\) exists such that \( \cB_0\hookrightarrow \cZ \) and let \( \mathscr{\bar  R}=\mathscr{ \bar R}_{s,t} \in \cB_0\) be such that  \( \dd ^S\mathscr{\bar R} \equiv \dd^{S}[S(\zeta\cdot \dd X + \zeta':\xx)]  \) while 
\[
|\mathscr{\bar R}_{t,s}|_{\cZ}=o(t-s)\,.
\]
Then we have necessarily \( \mathscr{\bar R}_{t,s}=\mathscr R^{S,\zeta}_{t,s}=\int_s^t \zeta\cdot d\X - S_{t,s}(\zeta_t\cdot \dd X_{t,s} + \zeta'_t:\xx_{t,s}) \) for each \( s\le t\in [0,T]. \)
\end{remark}

\begin{proof}[Proof of Theorem \ref{thm:integral}]
We focus on \eqref{e:integration2} since existence, multiplicativity and uniqueness follow from the same arguments as that of \cite[Thm 4.5]{gerasimovics2020non}. Moreover, it is clear that the rough integral is linear, hence we assume without loss of generality that \( \|\zeta,\zeta'\|_{\cD^{\gamma,\gamma'}_{X,\beta, \sigma}(\varepsilon)}\le 1. \) To show the claimed estimate, we rely on Lemma \ref{lem:dyadic}.
We consider the approximation term \( (s,t)\mapsto \xi_{t,s}:=\zeta_t\cdot \dd X_{t,s} + \zeta_t':\xx_{t,s} \) and apply Chen's relations~\eqref{chen}. These yield for any \( u\le m\le v\in [0,T]\) that
\begin{equation}\label{chen_plain}
\dd \xi_{v,m,u}= R_{v,m}^\zeta\cdot \dd X_{m,u} + \dd \zeta'_{v,m}\colon \xx_{m,u}.
\end{equation}
Fix \( (s,t)\in \Delta\) and write \( J_{v,u}=S_{t,u}\xi_{v,u}\). 
If \( s\le u\le m\le v\le t \in [0,T]\) the multiplicative structure of \( S\) together with \eqref{chen_plain} further entails
\[\begin{aligned}
	\dd J_{v,m,u}
	&=S _{t,u}\dd\xi _{v,m,u}+S_{t,m}(S _{m,u}-\id)\xi_{v,m}
	\\
	&=S _{t,u}R_{v,m}^\zeta\cdot \dd X_{m,u} + S _{t,u}\dd \zeta'_{v,m}\colon \xx_{m,u} 
	\\&\quad \quad 
	+ S_{t,m}(S _{m,u}-\id)\zeta_v\cdot\dd X_{v,m}+S_{t,m}(S _{m,u}-\id)\zeta'_v:\xx_{v,m}
	\\
	&=: \mathrm{I} + \mathrm{II} +\mathrm{III} +\mathrm{IV}\,.
\end{aligned}\]
Using the smoothing property \eqref{smoothing_S} together with the definition of the norm on $\cD^{\gamma,\gamma'}_{X,\beta,\sigma}$ given by~\eqref{new_norm}, we obtain the following estimates.
\[
\begin{aligned}
	|\mathrm{I}|_{\beta+\kappa}
	&\le \|S\|_{\beta-\sigma-\gamma'\to \beta+\kappa}[X]_{\gamma_0}[R^\zeta]^{(2\varepsilon)}_{\gamma+\gamma',\beta-\sigma-\gamma'}|m-u|^{\gamma_0}|v-m|^{\gamma+\gamma'}m^{-2\varepsilon}
	\\&
	\le K[X]_{\gamma_0}|t-m|^{-\sigma-\gamma'-\kappa}|m-u|^{\gamma_0}|v-m|^{\gamma+\gamma'}m^{-2\varepsilon}\,.
\end{aligned}
\]
Similarly
\[
\begin{aligned}
	|\mathrm{II}|_{\beta+\kappa}
	&\le \|S\|_{(\beta-\sigma-\gamma',\beta+\kappa)}[\xx]_{2\gamma_0}[\dd\zeta']^{(2\varepsilon)}_{\gamma',\beta-\sigma-\gamma'}|v-m|^{2\gamma_0}|v-m|^{\gamma'}m^{-2\varepsilon}\,.
	\\&
	\le K[\xx]_{2\gamma_0}|t-m|^{-\sigma-\gamma'-\kappa}|v-m|^{2\gamma_0}|v-m|^{\gamma'}m^{-2\varepsilon}\,.
\end{aligned}
\]
In order to estimate the third term,  we can fix any number \( \rho\in [1-\gamma, \beta)\) and infer that
\[\begin{aligned}
	|\mathrm{III}|_{\beta+\kappa}
	&\leq|S_{t,m}|_{\beta-\rho\to\beta+\kappa}|(S_{m,u}-\id)|_{\beta\to\beta-\rho}|\zeta_v\cdot \dd X_{v,m}|_{\beta}
	\\
	&\leq K\tilde K[X]_{\gamma _0}|t-m|^{-\rho-\kappa}|v-m|^{\gamma _0}|m-u|^{\rho}.
\end{aligned}
\]
Lastly, we fix any number \( \rho'\in (\gamma'-\sigma,\beta)\) and obtain
\[\begin{aligned}
	|\mathrm{IV}|_{\beta+\kappa}
	&\leq|S_{t,m}|_{\beta-\rho'\to\beta+\kappa}|(S_{m,u}-\id)|_{\beta-\sigma\to \beta-\rho'}|\zeta'_v:\xx_{v,m}|_{\beta-\sigma}
	\\&
	\leq [\xx]_{2\gamma _0}K\tilde K|t-m|^{-\rho'-\kappa}|v-m|^{2\gamma _0}|m-u|^{\rho'-\sigma}m^{-\varepsilon}\,.
\end{aligned}
\]
Now, we can apply Lemma \ref{lem:dyadic}, which yields the conclusion.
\end{proof}

A further consequence of Theorem \ref{thm:integral} is that rough convolutions are also controlled paths, as illustrated by the next result. We also collect bounds that quantify the gain of regularity due to integration.

\begin{corollary}\label{cor:improvereg}
Fix \( \frac13 <\gamma \le \gamma_0 <\frac12  \), 
a rough path \( \X\in\mathscr C^{\gamma_0}(\R^d)\), a spatial loss
\( \sigma\in [0,\gamma_0) \) and let \( \varepsilon= (\gamma-\sigma)_+.\)
Suppose that we are given parameters
\( \alpha\in (1-2\varepsilon-(\gamma_0-\gamma),1-\varepsilon]\)
while \( \gamma'\in(1-\gamma_0-\gamma, \alpha-\sigma)\), 
and finally pick any \( \theta\in (\frac{1-\gamma_0-\gamma}{\gamma'},\frac{\alpha-2\sigma}{\gamma'})\).
	 
	Let \( (\zeta,\zeta')\in \left (\cD^{\gamma,\theta\gamma'}_{X,\alpha-\sigma,\sigma}(0,T;\varepsilon)\right )^d\).
	The path \( z_t=\int_0^tS_{t,r}\zeta_r\cdot d\X_r\) satisfies the estimates
	\begin{equation}\label{est:integ}
		|z|^{(\varepsilon)}_{0,\alpha +\varepsilon}\lesssim (T^{\gamma_0 - \sigma } + T^{2\gamma_0-\sigma-\sigma\vee\gamma} + T^{\gamma_0 - \gamma})\|\zeta,\zeta'\|_{\cD^{\gamma,\theta\gamma'}_{X,\alpha-\sigma,\sigma}}
	\end{equation}
	\begin{equation}
		\label{est:holder_rho}
		[\dd z]_{\varrho;\alpha-\varrho} \lesssim (T^{\gamma_0 -\varrho -(\sigma-\varrho)_+} + T^{2\gamma_0-\varrho-(2\sigma-\varrho)_+}+ T^{\gamma_0-\gamma} )\|\zeta,\zeta'\|_{\cD^{\gamma,\theta\gamma'}_{X,\alpha-\sigma,\sigma}},
	\end{equation}
	for any \( \varrho\in (0,\gamma_0) \).
	
	Letting \(z'=\zeta \) and assuming further that \( \sigma\le \gamma\), \( \gamma'\le \gamma\), then the pair \( (z,z')\) is a well-defined controlled path, with spatial loss \( \sigma\) and \( (\gamma,\gamma')\)-H\"older regularity. Moreover, the following estimates are satisfied:
\begin{equation}
	\label{est:integ_dd}
	\begin{aligned}
	|z|_{0,\alpha} + [\dd z]^{(\varepsilon)}_{\gamma,\alpha-\sigma}
		\lesssim_{K,\tilde K}
		T^{\gamma_0-\gamma}\|\zeta,\zeta'\|_{\cD^{\gamma,\theta\gamma'}_{X,\alpha-\sigma,\sigma}}
	\end{aligned}
\end{equation}
\begin{equation}
\label{est:integ_R}
\begin{aligned}
[R^z]^{(2\varepsilon)}_{\gamma+\gamma',\alpha-\sigma-\gamma'}
\lesssim_{K,\tilde K,\rho_{\gamma}(\X)}
T^{\gamma_0-\gamma}\|\zeta,\zeta'\|_{\cD^{\gamma,\theta\gamma'}_{X,\alpha-\sigma,\sigma}}
\end{aligned}
\end{equation}
\begin{equation}
\label{est:integ_dd_prime}
\begin{aligned}
|z'|^{(\varepsilon)}_{0,\alpha-\sigma}+[\dd z']^{(2\varepsilon)}_{\gamma',\alpha-\sigma-\gamma'} 
\lesssim 
\left [T^\varepsilon+T^{2\varepsilon -\sigma(\frac{\gamma'}{\sigma}-1)_+}\right ]\|\zeta,\zeta'\|_{\cD^{\gamma,\theta\gamma'}_{X,\alpha-\sigma,\sigma}}\,.
\end{aligned}
\end{equation}
In particular, the rough integration is a well-defined, bounded operation from \\\( \left(\cD^{\gamma,\theta\gamma'}_{X,\alpha-\sigma,\sigma}(0,T;\varepsilon)\right )^d \) to \( \cD^{\gamma,\gamma'}_{X,\alpha,\sigma}(0,T;\varepsilon).\)
\end{corollary}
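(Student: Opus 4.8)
The plan is to establish the five weighted seminorm bounds that constitute the $\cD^{\gamma,\gamma'}_{X,\alpha,\sigma}(\varepsilon)$-norm of the pair $(z,z')$ with $z'=\zeta$, namely \eqref{est:integ_dd}, \eqref{est:integ_R} and \eqref{est:integ_dd_prime}, each as a direct consequence of the remainder estimate \eqref{e:integration2} from Theorem \ref{thm:integral} combined with elementary manipulations. First I would record the basic identity that expresses $\dd z_{t,s}$ and $R^z_{t,s}$ in terms of $\mathscr R^{S;\zeta}$: using multiplicativity \ref{itm:delta_S} and the identity \eqref{id_delta_S}, one writes $\dd z_{t,s}=\dd^S z_{t,s}+(S_{t,s}-I)z_s$, and $\dd^S z_{t,s}=\mathscr R^{S;\zeta}_{t,s}+S_{t,s}(\zeta_t\cdot\dd X_{t,s}+\zeta'_t:\xx_{t,s})$. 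Thus every seminorm of $\dd z$, $R^z$ splits into (a) a term controlled by $\mathscr R^{S;\zeta}$ via \eqref{e:integration2} with an appropriate choice of $\kappa,\iota$, (b) a term $(S_{t,s}-I)z_s$ estimated by \eqref{anti_smoothing_S} together with the supremum bound \eqref{est:integ} on $z$, and (c) explicit terms built from $S_{t,s}$ acting on $\dd X$ and $\xx$, estimated by \eqref{smoothing_S} and $\rho_{\gamma_0}(\X)$. The key point is that in each case the spatial indices have been arranged (through the hypotheses $\alpha\in(1-2\varepsilon-(\gamma_0-\gamma),1-\varepsilon]$, $\gamma'\in(1-\gamma_0-\gamma,\alpha-\sigma)$ and $\theta\in(\tfrac{1-\gamma_0-\gamma}{\gamma'},\tfrac{\alpha-2\sigma}{\gamma'})$) so that all the exponents $\gamma_0+\gamma-\sigma-\kappa-\iota$ that come out of \eqref{e:integration2} are strictly positive, which is exactly what produces the positive power of $T$ in front.

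Concretely, for \eqref{est:integ} I would apply \eqref{e:integration2} with $\beta=\alpha-\sigma$ (the base space of $\zeta$), $\kappa=\sigma+\varepsilon$, $\iota=2\varepsilon$ to get the $\mathscr R^{S;\zeta}$ contribution to $t^{\varepsilon}|z_t|_{\alpha+\varepsilon}$ with time exponent $\gamma_0+\gamma-\sigma-\sigma-\varepsilon-0$; checking $\varepsilon=(\gamma-\sigma)_+$ this equals $\gamma_0-\sigma$ when $\sigma\le\gamma$ (and one absorbs the case $\sigma>\gamma$, where $\varepsilon=0$, identically), matching the first power of $T$ in \eqref{est:integ}. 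The remaining two powers $T^{2\gamma_0-\sigma-\sigma\vee\gamma}$ and $T^{\gamma_0-\gamma}$ arise respectively from the $\zeta'_t:\xx_{t,s}$ term (bringing $[\xx]_{2\gamma_0}\le\rho_{\gamma_0}(\X)$ and a power $t^{-\varepsilon}$ on $\zeta'$) and from $(S_{t,s}-I)z_s$ after using the bound on $z$ itself — a mild self-referential point, but one resolves it by noting $z_0=0$ so there is no circularity at $t$ small, or simply by proving \eqref{est:integ} first with the $\dd X$-term and $\xx$-term only and then bootstrapping. For the H\"older bound \eqref{est:holder_rho} one proceeds identically but with $\kappa=\varrho$, $\iota=0$, keeping track of the $(\sigma-\varrho)_+$ and $(2\sigma-\varrho)_+$ corrections that come from \eqref{smoothing_S} and \eqref{anti_smoothing_S} in the auxiliary terms. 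For \eqref{est:integ_dd}--\eqref{est:integ_R}, the choices are $\kappa=-\sigma,\iota=0$ for $[R^z]^{(2\varepsilon)}_{\gamma+\gamma',\alpha-\sigma-\gamma'}$ — here one must check $\gamma+\gamma'\le\gamma_0+\gamma-\sigma-(-\sigma)=\gamma_0+\gamma$, i.e.\ $\gamma'\le\gamma_0$, which holds since $\gamma'\le\gamma\le\gamma_0$ — and one additionally needs that $R^z_{t,s}=\mathscr R^{S;\zeta}_{t,s}+\text{(lower-order explicit terms)}$ using $z'=\zeta$; the extra terms $S_{t,s}$ applied to $\dd X$ and $\xx$ minus $\zeta_t\cdot\dd X_{t,s}$ contribute the $(S_{t,s}-I)\zeta_t\cdot\dd X_{t,s}$ piece, estimated again by \eqref{anti_smoothing_S}. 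Finally \eqref{est:integ_dd_prime} is trivial: $z'=\zeta$, so $|z'|^{(\varepsilon)}_{0,\alpha-\sigma}+[\dd z']^{(2\varepsilon)}_{\gamma',\alpha-\sigma-\gamma'}=|\zeta|^{(\varepsilon)}_{0,\alpha-\sigma}+[\dd\zeta]^{(2\varepsilon)}_{\gamma',\alpha-\sigma-\gamma'}$, which is already part of $\|\zeta,\zeta'\|_{\cD^{\gamma,\theta\gamma'}_{X,\alpha-\sigma,\sigma}}$ (after noting $\theta\gamma'\ge$ or $\le\gamma'$ and invoking the interpolation Lemma~\ref{lem:interpolation}, \eqref{apriori_dd_prime}, to pass between the $\theta\gamma'$- and $\gamma'$-H\"older scales), giving the stated power of $T$ by the standard $T^{\gamma'-\theta\gamma'}$-type comparison of H\"older seminorms on a bounded interval.

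Collecting these five bounds gives that $(z,z')\in\cD^{\gamma,\gamma'}_{X,\alpha,\sigma}(0,T;\varepsilon)$ and that the map $(\zeta,\zeta')\mapsto(z,z')$ is bounded linear from $\left(\cD^{\gamma,\theta\gamma'}_{X,\alpha-\sigma,\sigma}(0,T;\varepsilon)\right)^d$ into $\cD^{\gamma,\gamma'}_{X,\alpha,\sigma}(0,T;\varepsilon)$, which is the ``in particular'' assertion. The main obstacle, and the only place requiring genuine care rather than bookkeeping, is verifying that the admissibility conditions of \eqref{e:integration2} — i.e.\ $\iota\in[0,2\varepsilon]$ and $\kappa+\iota\in[-\sigma,\gamma_0+\gamma-\sigma)$ — are simultaneously met for \emph{every} one of the above applications under the stated hypotheses on $\alpha,\gamma',\theta$, and in particular that the mismatch between the input regularity index $\theta\gamma'$ of the Gubinelli derivative and the output index $\gamma'$ is reconciled: this is precisely why $\theta$ must lie in the open interval $(\tfrac{1-\gamma_0-\gamma}{\gamma'},\tfrac{\alpha-2\sigma}{\gamma'})$, the lower bound guaranteeing the sewing exponent $\gamma_0+\gamma-\sigma-\kappa-\iota$ stays positive in the $\mathrm{II}$/$\mathrm{IV}$-type terms, and the upper bound guaranteeing the base space $\alpha-\sigma$ is still above the threshold $2\sigma$ needed to apply the interpolation estimates of Lemma~\ref{lem:interpolation}. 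Once these index inequalities are laid out explicitly, each of \eqref{est:integ}--\eqref{est:integ_dd_prime} follows by a one-line application of \eqref{e:integration2}, \eqref{smoothing_S}, \eqref{anti_smoothing_S} and the triangle inequality.
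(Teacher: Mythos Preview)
Your approach is essentially the paper's: decompose $z_t$, $\dd z_{t,s}$ and $R^z_{t,s}$ into the integral remainder $\mathscr R^{S;\zeta}$ plus explicit terms built from $S_{t,s}$, $\dd X$, $\xx$, then estimate each piece via \eqref{e:integration2}, \eqref{smoothing_S}, \eqref{anti_smoothing_S}, and handle $z'=\zeta$ by interpolation (Lemma~\ref{lem:interpolation}). A few bookkeeping slips to fix when you write it out: for \eqref{est:integ} there is no $(S_{t,s}-I)z_s$ term (you are estimating $z_t$ at $s=0$, not an increment), the three powers of $T$ come from $S_{t,0}\zeta_t\cdot\dd X_{t,0}$, $S_{t,0}\zeta'_t{:}\xx_{t,0}$ and $\mathscr R^{S;\zeta}_{t,0}$ respectively; for $[R^z]^{(2\varepsilon)}_{\gamma+\gamma',\alpha-\sigma-\gamma'}$ the correct choice is $\kappa=-\gamma'$ (not $-\sigma$) so that $\beta+\kappa=\alpha-\sigma-\gamma'$; and for $[\dd z']^{(2\varepsilon)}_{\gamma',\alpha-\sigma-\gamma'}$ the paper distinguishes $\gamma'\gtrless\sigma$ and uses \eqref{apriori_dd_y_epsilon} on $\dd\zeta$ rather than a direct H\"older comparison.
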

\begin{proof}
We assume without loss of generality that \[
 \|\zeta,\zeta'\|_{\cD^{\gamma,\theta\gamma'}_{X,\alpha-\sigma,\sigma}}\equiv |\zeta|_{0,\alpha-\sigma} +[\dd \zeta]^{(\varepsilon)}_{\gamma,\alpha-2\sigma}+ |\zeta'|^{(\varepsilon)}_{0,\alpha-2\sigma} + [\dd \zeta']^{(2\varepsilon)}_{\theta\gamma',\alpha-2\sigma-\theta\gamma'} +  [R^\zeta]^{(2\varepsilon)}_{\gamma+\theta\gamma',\alpha-2\sigma-\theta\gamma'}\le 1.
\]

Due to Theorem \ref{thm:integral} we obtain for the integral remainder \( \mathscr R^{S;\zeta} \) the estimate
\begin{equation}\label{integration_remainder}
	s^{2\varepsilon-\iota}|\mathscr R^{S,\zeta}_{t,s}|_{\alpha-\sigma + \kappa}
	\lesssim_{\rho_\gamma(\X),K,\tilde K}
	 |t-s|^{\gamma_0+\gamma -\sigma -\kappa-\iota}
\text{ for each }\kappa+\iota\in [-\sigma,\gamma_0+\gamma-\sigma),
\enskip\iota\in [0,2\varepsilon].
\end{equation}
Consequently, since
\[
z_t = \int_0^t \zeta_r \cdot d\X_r= S_{t,0}\zeta_t \cdot \dd X_{t,0} + S_{t,0}\zeta'_t :\xx_{t,0} +\mathscr R_{t,0}^{S,\zeta},
\]
we obtain setting \( \iota=\varepsilon\), \( \kappa=\gamma\) and recalling that $\varepsilon=(\gamma-\sigma)_+$
\[
\begin{aligned}
t^{\varepsilon}|z_t |_{\alpha+\varepsilon}
&\le 
t^{\varepsilon}\Big(| S_{t,0}|_{\alpha-\sigma\to \alpha+\varepsilon}|\zeta_t\cdot \dd X_{t,0}|_{\alpha-\sigma} + |S_{t,0}|_{\alpha-2\sigma\to\alpha+\varepsilon}|\zeta'_t: (\XX -\delta X^{\otimes 2})_{0,t}|_{\alpha-2\sigma} \\ &+|\mathscr R_{t,0}^{S,\zeta}|_{\alpha+\gamma-\sigma}\Big)
\\
&\lesssim_{K,\rho_\gamma(\X_0)}
t^{\gamma_0 - \sigma } + t^{2\gamma_0-2\sigma-\varepsilon} + t^{\gamma_0 - \gamma},
\end{aligned}
\]
which proves \eqref{est:integ} because \( \sigma + \varepsilon= \sigma\vee\gamma\).
As a consequence of Lemma \ref{lem:delta_S} and
\begin{multline*}
	|\dd^S z_{t,s}|_{\alpha-\varrho}
	\le |S_{t,s}|_{\alpha-\sigma\to \alpha-\varrho}|\zeta_t\cdot \dd X_{t,s}|_{\alpha-\sigma} \\
	+ |S_{t,s}|_{\alpha-2\sigma\to \alpha-\varrho}|\zeta'_t:(\XX_{t,s} -\delta X^{\otimes 2})_{t,s}|_{\alpha-2\sigma} 
	+|\mathscr R_{t,s}^{S,\zeta}|_{\alpha-\varrho} 
	\\
	\lesssim |t-s|^{\gamma_0 -(\sigma-\varrho)_+} + |t-s|^{2\gamma_0-(2\sigma-\varrho)_+}+ |t-s|^{\gamma_0-\gamma+\varrho} 
\end{multline*}
we also obtain \eqref{est:holder_rho}.

To show \eqref{est:integ_dd}, 
note that since \( \varepsilon=(\gamma-\sigma)_+\ge0\), we already infer from \eqref{est:integ} that \(|z|_{0,\alpha}\lesssim|z|_{0, \alpha +\varepsilon}\lesssim t^{\gamma_0 - \gamma } + t^{2\gamma_0-\gamma-\sigma}\), hence the first part of \eqref{est:integ_dd}. It remains to evaluate the increment \( \dd z_{t,s}=z_t-z_s.\) For that purpose, note first that
\begin{multline*}
	|\dd^S z_{t,s}|_{\alpha-\sigma}
	=|\int_s^t S_{t,r}\zeta_r\cdot d{\mathbf{X}}_r|_{\alpha-\sigma}
	\le |S_{t,s}|_{\alpha-\sigma\to \alpha-\sigma}|\zeta_t\cdot \dd X_{t,s}|_{\alpha-\sigma} \\
	+ |S_{t,s}|_{\alpha-2\sigma\to \alpha-\sigma}|\zeta'_t:(\XX_{t,s} -\delta X^{\otimes 2})_{t,s}|_{\alpha-2\sigma} 
	+|\mathscr R_{t,s}^{S,\zeta}|_{\alpha-\sigma} 
	\\
	\lesssim |t-s|^{\gamma_0} + |t-s|^{2\gamma_0-\sigma}+ s^{-\varepsilon}|t-s|^{\gamma}T^{\gamma_0-\sigma} , 
\end{multline*}
which shows boundedness of \( [\dd ^Sz]_{\gamma,\alpha-\sigma}^{(\varepsilon)}.\)
The fact that a similar bound is satisfied for the plain increment \( [\dd z]^{(\varepsilon)}_{\gamma,\alpha-\sigma}\)
follows by \eqref{est:integ} and Lemma \ref{lem:delta_S}.\\
The proof of \eqref{est:integ_R} is similar, noting that
\( R^{S,z}_{t,s}\equiv z_t-S_{t,s}z_s-\zeta_t\cdot\dd X_{t,s}\) satisfies 
\begin{equation}
\label{id_R_z}
\begin{aligned}
R^{S,z}_{t,s}&
= (S_{t,s}-I)\zeta_t\cdot\dd X_{t,s} + S_{t,s}\zeta_t':\xx_{t,s} + \mathscr R_{t,s}^{S;\zeta} .
\end{aligned}
\end{equation}
Using again \eqref{est:integ} and \eqref{integration_remainder} with \( \iota=0\) and \( \kappa=-\gamma'\ge-\sigma \) gives
\[
\begin{aligned}
	s^{2\varepsilon}|R^{S,z}_{t,s}|_{\alpha-\sigma-\gamma'} 
	&\lesssim
	 s^{2\varepsilon}\big(|S_{t,s}-I|_{\alpha-\sigma\to \alpha-\sigma-\gamma'}|\zeta_t\cdot \dd X_{t,s}|_{\alpha-\sigma}
	 \\&\quad \quad \quad \quad 
	 +|S_{t,s}|_{\alpha-2\sigma\to \alpha-\sigma-\gamma'}|\zeta'_t:(\XX_{t,s} -\delta X^{\otimes 2}_{t,s} )|_{\alpha-2\sigma}
	 +|t-s|^{\gamma_0+\gamma-\sigma+\gamma'}\big)
	 \\&\lesssim
	 s^{2\varepsilon}\big(|t-s|^{\gamma_0+\gamma'}
	 +|t-s|^{2\gamma_0 - \sigma +\gamma'}
	+|t-s|^{\gamma+\gamma'}T^{\gamma_0-\sigma}\big)
\end{aligned}
\]
as claimed.\\
Next, to evaluate the supremum of the Gubinelli derivative, we simply write
\[
t^{\varepsilon}|z'_t|_{\alpha-\sigma} = t^{\varepsilon}|\zeta_t|_{\alpha-\sigma} 
\leq T^\varepsilon\,.
\]
If \( (s,t)\in \Delta_2\), we also find
\begin{equation*}
s^{2\varepsilon}|\dd z'_{t,s}|_{\alpha-\sigma-\gamma'} 
=s^{2\varepsilon}|\dd \zeta_{t,s}|_{\alpha-\sigma - \frac{\gamma'}{\sigma}\sigma }
\lesssim 
\begin{cases}
	s^{\varepsilon} |t-s|^{\gamma} [\dd \zeta]^{(\varepsilon)}_{\gamma,\alpha-2\sigma}
	\quad \text{if}\quad \gamma'\ge\sigma
	\\
	s^{\varepsilon(2 - \frac{\gamma'}{\sigma})}|t-s|^{\frac{\gamma'}{\sigma}\gamma}
 [\dd \zeta]^{(\varepsilon\frac{\gamma'}{\sigma})}_{\frac{\gamma'}{\sigma}\gamma,\alpha-\sigma -\gamma'}
	\quad \text{otherwise}.
\end{cases}
\end{equation*}
The bound~\eqref{est:integ_dd_prime} follows by Lemma \ref{lem:interpolation}, observing that \( \varepsilon(2-\frac{\gamma'}{\sigma}) + \frac{\gamma\gamma'}{\sigma} =2\gamma-2\sigma +\gamma' \).\\
Finally, the fact that \( (z,z')\) is a controlled path in \( \cD^{\gamma,\gamma'}_{X,\alpha,\sigma}(0,T;\varepsilon)\) is a consequence of the estimates \eqref{est:integ_dd}, \eqref{est:integ_R} and \eqref{est:integ_dd_prime}.
\end{proof}

\subsection{Perturbation of the sewing map}
\label{ssec:perturb}
Given a scale \( (\cB_\beta)_{\beta\in [0,1]}\),
for $\beta,\beta'\in [0,1]$
we introduce the Banach space $\mathfrak K(\beta,\beta')$ of all $Q\in C(\dot \Delta_2; \cL(\cX_{\beta},\cX_{\beta'}))$ satisfying
\[
 \|Q\|_{(\beta,\beta')}:=\sup_{(s,t)\in\dot \Delta_2}
(t-s)^{\beta'-\beta}|Q_{t,s}|_{\cX_{\beta}\to\cX_{\beta'}}<\infty,
\]
where $\dot\Delta_2:=\Delta_2\setminus\{(t,t),t\in[0,T]\}.$
As is well-known, any evolution family \( S=S_{t,s}\) on \( \cX=\cB_0\), whose domain generator is constantly equal to \( \cB_1 \), defines an element of \( \mathfrak K(\beta,\beta') \) for any \( 0\le \beta\le\beta'\le 1. \)
The difference  \( Q=S^1-S^2\) of two such evolution families may also be evaluated in these spaces, in terms of the operator-norm of the difference of the generators (see the main perturbation results in Appendix \ref{app:perturbation}). 

We now aim to quantify the effect of a change of propagator on the corresponding sewing map, in order to investigate the rough convolution
\[
	\int_{s}^t ( S^{1}_{t,r} - S^2_{t,r}) \zeta_r \cdot d\X_r.
\]
when \( \X\in \mathscr C^{\gamma_0}(\R^d)\), \( \gamma_0\in (\frac13,\frac12).\)
The main result of this subsection reads as follows.
\begin{theorem}[Perturbation of the sewing map] 
	\label{thm:perturbation}
Fix \( \gamma_0,\gamma',\sigma,\beta,\kappa,\iota\) and \( \X \in \mathscr C^{\gamma_0}(\R^d)\) as in Theorem \ref{thm:integral}. Consider two evolution families \(S^1=\exp(\int L^1_rdr),S^2=\exp(\int L^2_rdr)\) on \( \cX=\cB_0\), both subject to \ref{P1}--\ref{P4star} with the same space \( \cX_1=\cB_1\) and the same constants \(M,K,\tilde K>0, \lambda\in \R \).
Then, the following uniform estimate holds for any \( 0< T\le1\), \( (t,s)\in \Delta_2([0,T])\) and \( (\zeta,\zeta')\in \mathcal D_{X,\beta,\sigma}^{\gamma,\gamma'}([0,T];\varepsilon)\):
\begin{equation}\label{perturbation:sewing}
s^{2\varepsilon-\iota}\Big| \mathscr R^{S^1;\zeta}_{t,s} - \mathscr R^{S^2;\zeta}_{t,s}\Big|_{\beta+\kappa}
\lesssim
\rho_{\gamma_0}(\X)(1\vee K\vee \tilde K) \Gamma_{\beta+\kappa}(L^1,L^2)\|\zeta,\zeta'\|_{\cD_{X,\beta,\sigma}^{\gamma,\gamma' }(\varepsilon)}|t-s|^{\gamma_0+\gamma-\sigma -\kappa-\iota},\,
\end{equation} 
where 
\begin{equation}
\label{nota:Gamma}
\Gamma_{\alpha}(L^1,L^2)=
\begin{cases}
|L^1-L^2|_{0,\cB_1\to\cB_0}\quad \hspace*{38 mm}\text{if } 0\le \alpha<1\\
|L^1-L^2|_{0,\cB_1\to\cB_0}+T^{\varrho}|L^1-L^2|_{\varrho,\cB_1\to\cB_0}\quad \text{if }\alpha=1\,.
\end{cases}
\end{equation}
Here we recall that the integral remainder is the two-parameter quantity \( \mathscr R^{S;\zeta}_{t,s}=\int\limits_s^t S_{t,r}\zeta_r\cdot d\X_r - S_{t,s}(\zeta_t\cdot\dd X_{t,s} +\zeta_t':\xx_{t,s}) \).
\end{theorem}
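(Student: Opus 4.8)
The plan is to mimic the proof of Theorem~\ref{thm:integral}, but applied to the \emph{difference} of the two sewing maps, and to reuse Lemma~\ref{lem:dyadic} with an extra factor $\Gamma_{\beta+\kappa}(L^1,L^2)$ carried along in each dyadic estimate. First I would set, for a fixed $(s,t)\in\Delta_2$, the approximating germ $\xi_{v,u}=\zeta_u\cdot\dd X_{v,u}+\zeta_u':\xx_{v,u}$ exactly as before, and write $Q_{t,s}:=S^1_{t,s}-S^2_{t,s}$, $J_{v,u}:=Q_{t,u}\xi_{v,u}$. By Theorem~\ref{thm:integral} applied to each of $S^1,S^2$ separately, both $\mathscr R^{S^1;\zeta}$ and $\mathscr R^{S^2;\zeta}$ exist and are multiplicative in the sense of \ref{itm:delta_S}, and one checks that $\mathscr R^{S^1;\zeta}_{t,s}-\mathscr R^{S^2;\zeta}_{t,s}$ is precisely the sewing of $J$ against the dyadic partitions: $\mathscr R^{S^1;\zeta}_{t,s}-\mathscr R^{S^2;\zeta}_{t,s}=\bigl(\lim_{n}\sum_{[u,v]\in\pi_n(s,t)}J_{u,v}\bigr)-J_{t,s}$. (The $o(t-s)$ characterisation in Remark~\ref{rem:uniqueness}, together with the fact that $Q_{t,t}=0$ and $Q$ is multiplicative relative to neither $S^1$ nor $S^2$ but the combination telescopes correctly, identifies this limit; I would spell out the Chen/telescoping bookkeeping carefully since this is where a sign or index slip is most likely.)

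Next I would compute $\dd J_{v,m,u}$ for $s\le u\le m\le v\le t$. Using Chen's relation \eqref{chen_plain} and the multiplicativity of \emph{both} propagators, one gets a decomposition into four terms $\mathrm I$--$\mathrm{IV}$ structurally identical to the proof of Theorem~\ref{thm:integral}, except that every occurrence of a single propagator $S$ is replaced either by $Q=S^1-S^2$ (smoothing term) or — in the terms coming from $(S_{m,u}-I)$ — by a telescoped expression of the form $Q_{t,m}(S^1_{m,u}-I)$ plus $S^2_{t,m}\bigl((S^1_{m,u}-I)-(S^2_{m,u}-I)\bigr)=S^2_{t,m}(S^1_{m,u}-S^2_{m,u})=S^2_{t,m}Q_{m,u}$. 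For the $Q_{t,\cdot}$ factors I would invoke the perturbation estimates of Appendix~\ref{app:perturbation}: these bound $|Q_{t,u}|_{\beta\to\beta'}$ and $|Q_{t,u}-Q_{t,m}|$-type quantities by $\Gamma_{\beta'}(L^1,L^2)$ times the same power of $(t-u)$ that $\|S\|_{(\beta,\beta')}$ would give, with the logarithmic/endpoint subtlety at $\beta'=1$ absorbed precisely into the two-case definition \eqref{nota:Gamma} of $\Gamma$. For the $Q_{m,u}$ factor (which is an increment over the small interval, hence gains a power $|m-u|$), the anti-smoothing-type perturbation bound gives $|Q_{m,u}|_{\alpha\to\beta}\lesssim\Gamma(L^1,L^2)|m-u|$ analogously to \ref{P4star}. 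The upshot is that each of $\mathrm I$--$\mathrm{IV}$ obeys exactly the bound from the proof of Theorem~\ref{thm:integral} multiplied by $\Gamma_{\beta+\kappa}(L^1,L^2)$, in the form required by hypothesis \eqref{hyp:dyadic} of Lemma~\ref{lem:dyadic} (same exponents $\mu_i,\lambda_i,\nu_i,\varepsilon_i$).

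Finally I would apply Lemma~\ref{lem:dyadic} with $\omega=2\varepsilon-\iota$ — the admissibility conditions $\iota\in[0,2\varepsilon]$ and $\kappa+\iota\in[-\sigma,\gamma_0+\gamma-\sigma)$ are the same ones already checked in Theorem~\ref{thm:integral} and guarantee $(\varepsilon_i-\omega)_+<\mu_i-\lambda_i$ — to obtain
\[
s^{2\varepsilon-\iota}\bigl|\mathscr R^{S^1;\zeta}_{t,s}-\mathscr R^{S^2;\zeta}_{t,s}\bigr|_{\beta+\kappa}
\lesssim \rho_{\gamma_0}(\X)(1\vee K\vee\tilde K)\,\Gamma_{\beta+\kappa}(L^1,L^2)\,\|\zeta,\zeta'\|_{\cD^{\gamma,\gamma'}_{X,\beta,\sigma}(\varepsilon)}\,|t-s|^{\gamma_0+\gamma-\sigma-\kappa-\iota},
\]
which is \eqref{perturbation:sewing}. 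The main obstacle I anticipate is not the dyadic bookkeeping (which is essentially a copy of Theorem~\ref{thm:integral}) but obtaining the correct perturbation estimates for the propagator difference $Q=S^1-S^2$ with the sharp dependence on $\Gamma_{\beta+\kappa}$ — in particular handling the borderline case $\beta+\kappa=1$ where the naive interpolation bound on $|Q_{t,u}|_{\beta\to 1}$ would blow up logarithmically and one must instead use the Hölder-in-time norm $|L^1-L^2|_{\varrho,\cB_1\to\cB_0}$ together with the integral equations \eqref{first_int}--\eqref{second_int}; this is exactly what Appendix~\ref{app:perturbation} is set up to deliver, so the proof here reduces to invoking it with the right indices and verifying that the terms $\mathrm I$--$\mathrm{IV}$ never require $Q_{t,\cdot}$ mapping \emph{into} $\cB_1$ unless $\beta+\kappa=1$, in which case the second branch of \eqref{nota:Gamma} is in force.
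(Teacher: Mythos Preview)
Your approach is essentially the same as the paper's: define $J_{v,u}=(S^1_{t,u}-S^2_{t,u})\xi_{v,u}$, expand $\dd J_{v,m,u}$ via Chen's relation and a telescoping of $S^1_{t,m}(S^1_{m,u}-I)-S^2_{t,m}(S^2_{m,u}-I)$, estimate the resulting four terms using Lemma~\ref{lem:K} for the $\mathfrak K$-norms of $S^1-S^2$, and feed everything into Lemma~\ref{lem:dyadic}. The paper does precisely this, and the constant \eqref{constant_A} is exactly the maximum of the three $\|S^1-S^2\|_{(\cdot,\cdot)}$ norms that appear, each bounded by $\Gamma_{\beta+\kappa}$ via Lemma~\ref{lem:K}.

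One point to correct: you write $\xi_{v,u}=\zeta_u\cdot\dd X_{v,u}+\zeta'_u:\xx_{v,u}$, but both Theorem~\ref{thm:integral} and the paper's proof here use the \emph{right} endpoint, $\xi_{v,u}=\zeta_v\cdot\dd X_{v,u}+\zeta'_v:\xx_{v,u}$. This is not cosmetic: with a weight $\varepsilon>0$ at the origin, the bound on $|\zeta'_u|_{\beta-2\sigma}$ carries a factor $u^{-\varepsilon}$, which for the first dyadic sub-interval $[s,s+2^{-n}(t-s)]$ equals $s^{-\varepsilon}$ and does \emph{not} fit the hypothesis $m^{-\varepsilon_i}$ of Lemma~\ref{lem:dyadic} (and blows up when $s=0$). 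Using $\zeta_v$ gives $v^{-\varepsilon}\le m^{-\varepsilon}$, which is what the lemma needs. Also, the ``anti-smoothing'' gain you quote for $Q_{m,u}=(S^1-S^2)_{m,u}$ is not a full power $|m-u|$ but $|m-u|^{\rho}$ when mapping $\cB_\beta\to\cB_{\beta-\rho}$, obtained from the same integral representation \eqref{representation1}; this is how the paper handles terms $\mathrm{III}$ and $\mathrm{IV}$.
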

\begin{proof}
We assume without loss of generality that \( \|\zeta,\zeta'\|_{\cD_{X,\alpha,\sigma}^{\gamma,\gamma'}}\le 1\).
In order to evaluate $\mathscr R^{S^1;\zeta}-\mathscr R^{S^2;\zeta}$,
we rely on Lemma \ref{lem:dyadic}. As a consequence of Corollary \ref{thm:integral} and the existence of the sewing map, we have
 \[\begin{aligned}
\mathscr R^{S^1;\zeta}_{t,s}-\mathscr R^{S^2;\zeta}_{t,s}
&=\lim_{n\to \infty} \sum_{[u,v]\in \pi_n(s,t)}J_{v,u}
\end{aligned}
 \]
 where \( J_{v,u}=(S^1_{t,u}-S^2_{t,u})(\zeta_v\cdot\dd X_{v,u} + \zeta'_v:\xx_{v,u})\) and \( \pi_n(s,t)\) is the dyadic sequence as in \eqref{dyadic}.
Using Chen's relation, one observes the following identity for each \( s\le u\le m\le v\le t\):
\begin{equation}\label{algebraic_id}
\begin{aligned}
\dd J_{v,m,u}&= 
S^1_{t,u}(R^{\zeta}_{v,m}\cdot \dd X_{m,u} + \dd \zeta'_{v,m}:\xx_{m,u}) 
\\&\quad 
+ S^1_{t,u}(S^1_{v,m}-\id)(\zeta_v\cdot\dd X_{v,m} + \zeta_v':\xx_{v,m})
\\& \quad \quad 
- S^2_{t,u}(R^{\zeta}_{v,m}\cdot \dd X_{m,u} + \dd \zeta_{v,m}':\xx_{v,m}) 
\\&\quad \quad \quad 
- S^2_{t,u}(S^2_{v,m}-\id)(\zeta_v\cdot\dd X_{v,m} + \zeta'_v:\xx_{v,m})
\\&
=
(S^2-S^1)_{t,u} R^{\zeta}_{v,m}\cdot\dd X_{m,u}
+ (S^2-S^1)_{t,u} \dd \zeta'_{v,m}:\xx_{m,u}
\\
&\quad \quad \quad \quad 
+\big[S^1_{t,u}(S^1-S^2)_{v,m}+(S^1-S^2)_{t,u}(S^2_{v,m}-\id)\big]\zeta_v \cdot\dd X_{v,m} 
\\
&\quad \quad \quad \quad \quad \quad 
+\big[S^1_{t,u}(S^1-S^2)_{v,m}+(S^1-S^2)_{t,u}(S^2_{v,m}-\id)\big]\zeta'_v:\xx_{v,m} 
\\
&=\mathrm{I}+\mathrm{II}+\mathrm{III}+\mathrm{IV}.
\end{aligned}
\end{equation}
We now use \eqref{apriori_R_gamma} to estimate the first term as follows:
\[
\begin{aligned}
|\mathrm{I}|_{\beta+\kappa}
&\lesssim
|S^2_{t,u}-S^1_{t,u}|_{\beta-\sigma-\gamma'\to\beta+\kappa}|v-m|^{\gamma+\gamma'}|m-u|^{\gamma_0}m^{-2\varepsilon}[R^\zeta]^{(2\varepsilon)}_{\gamma+\gamma',\beta-\sigma-\gamma'}[X]_{\gamma_0}, 
\\
&\lesssim_{\rho_{\gamma_0}(\X)}
\|S^1-S^2\|_{(\beta-\sigma-\gamma',\beta+\kappa)}
|t-u|^{-\kappa-\sigma-\gamma'}|v-m|^{\gamma+\gamma'}|m-u|^{\gamma_0} m^{-2\varepsilon}\,.
\end{aligned}
\]
For the second term, we use \eqref{apriori_dd_prime}, which gives
\[
\begin{aligned}
|\mathrm{II}|_{\beta+\kappa}
&\lesssim
|S^2_{t,u}-S^1_{t,u}|_{\beta-\sigma-\gamma'\to\beta+\kappa}|v-m|^{\gamma'}|m-u|^{2\gamma_0}m^{-2\varepsilon}[\dd \zeta']^{(2\varepsilon)}_{\gamma',\beta-\sigma-\gamma'}[\xx]_{2\gamma_0}
\\
&
\lesssim_{\rho_{\gamma_0(\X)}}
\|S^2-S^1\|_{(\beta-\sigma-\gamma',\beta+\kappa)}|t-u|^{-\kappa-\sigma-\gamma'} |v-m|^{\gamma'}|m-u|^{2\gamma_0}m^{-2\varepsilon}\,.
\end{aligned}
\]
For the third term, we choose
 any \( \rho\in(1-\gamma,\beta)\) and regard that $|m-u|=|v-m|$ to obtain that
\[
\begin{aligned}
|\mathrm{III}|_{\beta+\kappa}
&\lesssim
\Big(|S^1_{t,u}|_{\beta-\rho\to\beta+\kappa}|S^1_{v,m}-S^2_{v,m}|_{\beta\to\beta-\rho}
\\
&\quad \quad \quad 
+|S^1_{t,u}-S^2_{t,u}|_{\beta-\rho\to\beta+\kappa}|S^2_{v,m}-\id|_{\beta\to\beta-\rho}
\Big)|m-u|^{\gamma_0}|\zeta|_{0,\beta}[X]_{\gamma_0}
\\
&
\lesssim_{\rho_{\gamma_0}(\X)}
\Big(\|S^1\|_{(\beta-\rho,\beta+\kappa)}\|S^1-S^2\|_{(\beta,\beta-\rho)}
\\
&\quad \quad 
+\|S^1-S^2\|_{(\beta-\rho,\beta+\kappa)}\|S^2-\id\|_{(\beta,\beta-\rho)}
\Big)|t-u|^{-\kappa-\rho}|m-u|^{\gamma_0+\rho}\,.
\end{aligned}
\]
Lastly, if \( \rho'\in (1-2\gamma+\sigma,\beta)\), then
\[
\begin{aligned}
|\mathrm{IV}|_{\beta+\kappa}
&\lesssim
\Big(|S^1_{t,u}|_{ \beta-\rho'\to\beta+\kappa}|S^1_{v,m}-S^2_{v,m}|_{\beta-\sigma\to\beta-\rho'}
\\
&\quad \quad 
+|S^1_{t,u}-S^2_{t,u}|_{ \beta-\rho'\to\beta+\kappa}|S^2_{v,m}-\id|_{\beta-\sigma\to\beta-\rho'}\Big)|m-u|^{2\gamma} |\zeta'|^{(\varepsilon)}_{0,\beta-\sigma}[\xx]_{2\gamma_0}
\\
&\lesssim_{\rho_{\gamma_0}(\X)}
\Big(\|S^1\|_{(\beta-\rho',\beta+\kappa)}\|S^1-S^2\|_{(\beta-\sigma,\beta-\rho')}
\\
&
\quad \quad 
+\|S^1-S^2\|_{(\beta-\rho',\beta+\kappa)}\|S^2-\id\|_{(\beta-\sigma,\beta-\rho')}
\Big)|t-u|^{-\kappa-\rho'}|m-u|^{2\gamma_0+\rho'-\sigma}m^{-\varepsilon}\,.
\end{aligned}
\]
To conclude, we note that Lemma \ref{lem:dyadic} is indeed applicable (by our choice of \( \rho,\rho' \) and by assumption on the parameters). It yields the bound
\[
s^{2\varepsilon-\iota}\Big| \mathscr R^{S^1;\zeta}_{t,s} - \mathscr R^{S^2;\zeta}_{t,s}\Big|_{\beta+\kappa}
\lesssim
\rho_{\gamma_0}(\X)(1\vee K\vee \tilde K) A|t-s|^{\gamma+\gamma_0 -\sigma-\kappa-\iota}\,,
\]
for the constant
\begin{equation}
	\label{constant_A}
	A=
	\|S^1-S^2\|_{(\beta-\sigma-\gamma',\beta+\kappa)}
	\vee\|S^1-S^2\|_{(\beta,\beta-\rho)}
	\vee\|S^1-S^2\|_{(\beta-\sigma,\beta-\rho')}
	\lesssim
\Gamma_{\beta+\kappa}(L^1,L^2)
\end{equation} 
by Lemma \ref{lem:K}.
This yields our conclusion.
\end{proof}

The next statement is the analogue of Theorem \ref{thm:integral} for the difference of two evolution families.

\begin{corollary}\label{c:perturbation}
	Let \( \gamma_0,\gamma,\gamma',\sigma,\alpha,\theta\), \( \X\in \mathscr C^{\gamma_0}(\R^d)\) be as in the hypotheses of Corollary \ref{cor:improvereg}, and pick \( \alpha\in (1-2\varepsilon-(\gamma_0-\gamma),1-\varepsilon]\) where \( \varepsilon=\gamma-\sigma\ge0. \)
	The map
	\begin{multline*}
	\left (\cD^{\gamma,\theta\gamma'}_{X,\alpha-\sigma,\sigma}([0,T];\varepsilon)\right )^d\to\cD^{\gamma,\gamma'}_{X,\alpha,\sigma}([0,T],\varepsilon) \,,
\\
(\zeta,\zeta')\mapsto (z,z'):=\left(\int\limits_0^{\cdot}(S^{1}_{\cdot,r} -S^2_{\cdot,r})\zeta_r \cdot d\X_r ,0\right)
	\end{multline*}
	is well-defined and bounded. It satisfies the following estimates (recall \eqref{nota:Gamma})
	\begin{equation}\label{est:perturb}
		|z|^{(\varepsilon)}_{0,\alpha +\varepsilon}\lesssim
		\Gamma_{\alpha+\varepsilon}(L^1, L^2) T^{\gamma_0-\gamma}\|\zeta,\zeta'\|_{\cD^{\gamma,\theta\gamma'}_{X,\alpha-\sigma,\sigma}}
	\end{equation}
	\begin{equation}
		\label{est:holder_rho_diff}
		[\dd z]_{\varrho;\alpha-\varrho} \lesssim \Gamma_{\alpha}(L^1, L^2)\left [T^{\gamma_0 -\varrho -(\sigma-\varrho)_+} + T^{2\gamma_0-\varrho-(2\sigma-\varrho)_+}+ T^{\gamma_0-\gamma}\right ] \|\zeta,\zeta'\|_{\cD^{\gamma,\theta\gamma'}_{X,\alpha-\sigma,\sigma}},
	\end{equation}
for any \(\varrho\in (0,\gamma_0)\) while
	\begin{equation}
		\label{est:perturb_dd}
		\begin{aligned}
			|z|_{0,\alpha}+\vee_{i=0,1}[\dd z]^{((i+1)\varepsilon)}_{\gamma+i\gamma',\alpha-\sigma-i\gamma'}
			\lesssim_{K,\tilde K,\rho_{\gamma_0}(\X)}
			\Gamma_{\alpha}(L^1,L^2)
			 T^{\gamma_0-\gamma}\|\zeta,\zeta'\|_{\cD^{\gamma,\theta\gamma'}_{X,\alpha-\sigma,\sigma}}.
		\end{aligned}
	\end{equation}
	In particular
	\begin{align}
	\|z,z'\|_{\cD^{\gamma,\gamma'}_{X,\alpha,\sigma}([0,T],\varepsilon)} \lesssim 
	\rho_{\gamma_0}(\X)\Gamma_\alpha(L^1,L^2)\|\zeta,\zeta'\|_{\cD^{\gamma,\theta\gamma'}_{X,\alpha,\sigma}([0,T],\varepsilon)}\,.
	\end{align}
\end{corollary}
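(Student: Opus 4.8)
The plan is to deduce Corollary \ref{c:perturbation} from Theorem \ref{thm:perturbation} in exactly the same way that Corollary \ref{cor:improvereg} was deduced from Theorem \ref{thm:integral}, simply carrying along the extra factor $\Gamma_\alpha(L^1,L^2)$ at each step. First I would write $z_t = \int_0^t (S^1_{t,r}-S^2_{t,r})\zeta_r\cdot d\X_r$ and decompose it, as in the proof of Corollary \ref{cor:improvereg}, via the identity
\[
z_t = (S^1_{t,0}-S^2_{t,0})\zeta_t\cdot\dd X_{t,0} + (S^1_{t,0}-S^2_{t,0})\zeta_t':\xx_{t,0} + \big(\mathscr R^{S^1;\zeta}_{t,0}-\mathscr R^{S^2;\zeta}_{t,0}\big),
\]
and more generally for $(s,t)\in\Delta_2$ the increment $\dd z_{t,s}$, the $S^1$-increment $\dd^{S^1} z_{t,s}$ and the controlled-path remainder $R^{z}_{t,s}$ are expanded by the analogues of the formulas used before. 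The two structural inputs are: (i) Theorem \ref{thm:perturbation}, which provides $s^{2\varepsilon-\iota}|\mathscr R^{S^1;\zeta}_{t,s}-\mathscr R^{S^2;\zeta}_{t,s}|_{\beta+\kappa}\lesssim \rho_{\gamma_0}(\X)(1\vee K\vee\tilde K)\Gamma_{\beta+\kappa}(L^1,L^2)\|\zeta,\zeta'\|\,|t-s|^{\gamma_0+\gamma-\sigma-\kappa-\iota}$ with exactly the admissible ranges of $\kappa,\iota$ from Theorem \ref{thm:integral}; and (ii) the perturbation bounds on $S^1-S^2$ in the spaces $\mathfrak K(\beta,\beta')$ from Appendix \ref{app:perturbation} (Lemma \ref{lem:K}), namely $\|S^1-S^2\|_{(\beta,\beta')}\lesssim \Gamma_{\beta'}(L^1,L^2)|t-s|^{(\beta-\beta')\wedge 0}$-type estimates, replacing the unadorned smoothing/anti-smoothing bounds \eqref{smoothing_S}--\eqref{anti_smoothing_S} used in Corollary \ref{cor:improvereg}.

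Concretely, for \eqref{est:perturb} I set $\iota=\varepsilon$, $\kappa=\gamma$ and bound $t^\varepsilon|z_t|_{\alpha+\varepsilon}$ by estimating each of the three terms: the two ``algebraic'' terms use $|S^1_{t,0}-S^2_{t,0}|_{\alpha-\sigma\to\alpha+\varepsilon}\lesssim\Gamma_{\alpha+\varepsilon}(L^1,L^2)\,t^{-(\sigma+\varepsilon)}=\Gamma_{\alpha+\varepsilon}\,t^{-\sigma\vee\gamma}$ (and similarly with $\alpha-2\sigma$), while the remainder term uses Theorem \ref{thm:perturbation}; collecting powers of $t$ and using $T\le 1$ together with $\sigma+\varepsilon=\sigma\vee\gamma$ yields the single factor $T^{\gamma_0-\gamma}$ after one checks that $\gamma_0-\sigma,\,2\gamma_0-2\sigma-\varepsilon\ge\gamma_0-\gamma$ in the relevant regime. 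For \eqref{est:holder_rho_diff} I expand $\dd^{S^1}z_{t,s}$, pass to the plain increment via Lemma \ref{lem:delta_S}, and use the anti-smoothing perturbation bound; note here the factor is $\Gamma_\alpha$ (not $\Gamma_{\alpha+\varepsilon}$) because the target regularity index is $\alpha-\varrho\le\alpha$. The estimate \eqref{est:perturb_dd} is obtained the same way: the bound on $|z|_{0,\alpha}$ follows from \eqref{est:perturb} since $\varepsilon\ge0$; the weighted H\"older seminorm $[\dd z]^{(\varepsilon)}_{\gamma,\alpha-\sigma}$ comes from expanding $\dd^{S^1}z_{t,s}=\int_s^t(S^1_{t,r}-S^2_{t,r})\zeta_r\cdot d\X_r$ into its three pieces with the perturbed smoothing bounds; and the remainder seminorm $[R^z]^{(2\varepsilon)}_{\gamma+\gamma',\alpha-\sigma-\gamma'}$ follows from the identity $R^{S^1,z}_{t,s}=(S^1_{t,s}-I)\zeta_t\cdot\dd X_{t,s}+S^1_{t,s}\zeta_t':\xx_{t,s}+(\mathscr R^{S^1;\zeta}_{t,s}-\mathscr R^{S^2;\zeta}_{t,s})$ — wait, one must be slightly careful: $R^z$ involves $z$ which is itself a difference, so the correct identity is $R^{S^1,z}_{t,s}=\dd z_{t,s}-\dd^{S^1}z_{t,s}+\dd^{S^1}z_{t,s}-\zeta_t\cdot\dd X_{t,s}=(S^1_{t,s}-I)z_s+\big[(S^1_{t,s}-S^2_{t,s})\zeta_t\cdot\dd X_{t,s}+(S^1_{t,s}-S^2_{t,s})\zeta_t':\xx_{t,s}+(\mathscr R^{S^1;\zeta}_{t,s}-\mathscr R^{S^2;\zeta}_{t,s})\big]$, where the $(S^1_{t,s}-I)z_s$ term is controlled via \eqref{anti_smoothing_S} and \eqref{est:perturb}. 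Finally, since $z'=0$ identically the Gubinelli-derivative seminorms $|z'|^{(\varepsilon)}_{0,\alpha-\sigma}$ and $[\dd z']^{(2\varepsilon)}_{\gamma',\alpha-\sigma-\gamma'}$ vanish, so the only contributions to $\|z,z'\|_{\cD^{\gamma,\gamma'}_{X,\alpha,\sigma}}$ are those already bounded, and summing them gives the last displayed inequality.

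I expect the main obstacle to be purely bookkeeping: verifying that all the exponent constraints required to invoke Theorem \ref{thm:perturbation} (the ranges $\kappa+\iota\in[-\sigma,\gamma_0+\gamma-\sigma)$, $\iota\in[0,2\varepsilon]$) and to invoke the perturbation estimates of Appendix \ref{app:perturbation} (choices of auxiliary indices $\rho\in(1-\gamma,\beta)$, $\rho'\in(1-2\gamma+\sigma,\beta)$, $\theta\in(\tfrac{1-\gamma_0-\gamma}{\gamma'},\tfrac{\alpha-2\sigma}{\gamma'})$) are all compatible with the standing hypothesis $\alpha\in(1-2\varepsilon-(\gamma_0-\gamma),1-\varepsilon]$, and that the various powers of $T$ that appear all dominate $T^{\gamma_0-\gamma}$ when $T\le 1$ — so that the estimates can be stated with the clean single factor claimed. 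Since the algebraic decompositions are verbatim those of Corollary \ref{cor:improvereg} with each occurrence of a smoothing/anti-smoothing norm of a single propagator replaced by the corresponding perturbation norm of $S^1-S^2$ (yielding the $\Gamma_\alpha$ factor), and since every one of these substituted norms was already estimated in Appendix \ref{app:perturbation}, no genuinely new analytic input is needed beyond Theorem \ref{thm:perturbation}; the proof therefore amounts to ``repeat the proof of Corollary \ref{cor:improvereg}, tracking $\Gamma_\alpha(L^1,L^2)$,'' and this is how I would present it.
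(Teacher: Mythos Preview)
Your proposal is correct and follows essentially the same route as the paper: decompose $z_t$ into the two ``algebraic'' terms and the difference of integral remainders, invoke Theorem \ref{thm:perturbation} for the latter, Lemma \ref{lem:K} for the $\mathfrak K$-norms of $S^1-S^2$, and pass from reduced to plain increments via Lemma \ref{lem:delta_S}. The only simplification you are missing is that since $z'\equiv0$ one has $R^z_{t,s}=\dd z_{t,s}-z'_t\cdot\dd X_{t,s}=\dd z_{t,s}$ exactly, so the remainder seminorm $[R^z]^{(2\varepsilon)}_{\gamma+\gamma',\alpha-\sigma-\gamma'}$ is literally $[\dd z]^{(2\varepsilon)}_{\gamma+\gamma',\alpha-\sigma-\gamma'}$ (the $i=1$ case of \eqref{est:perturb_dd}); your detour through $R^{S^1,z}$ and the term $(S^1_{t,s}-I)z_s$ is unnecessary.
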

\begin{proof}
We assume without loss of generality that \( \|\zeta,\zeta'\|_{\cD^{\gamma,\theta\gamma'}_{X,\alpha,\sigma}}\le 1.\)
Because \( z'\equiv0 \), we have that \( R^z=\dd z\), thus boundedness will follow from the estimates \eqref{est:perturb}-\eqref{est:perturb_dd}.
To show \eqref{est:perturb}, write
\[
z_t = (\mathscr R^{S^1,\zeta}-\mathscr R^{S^2,\zeta})_{t,0}
+(S^1-S^2)_{t,0}\zeta_t\cdot \dd X_{t,0} + (S^1-S^2)_{t,0}\zeta'_t:\xx_{t,0}\,.
\]
Then, Theorem \ref{thm:perturbation} with \( \beta=\alpha-\sigma\), \( \kappa=\gamma\) and \( \iota=\varepsilon\) yields
\begin{multline*}
	t^{\varepsilon}|z_t|_{\alpha+\gamma-\sigma} 
	\lesssim  \Gamma_{\alpha+\gamma-\sigma}(L^1,L^2)\,t^{\gamma_0-\gamma}
	\\
+\|S^1-S^2\|_{(\alpha-\sigma,\alpha+\gamma-\sigma)}t^{\gamma_0-\gamma} + \|S^1-S^2\|_{(\alpha-2\sigma,\alpha-\sigma+\gamma)}t^{2\gamma_0-\gamma-\sigma}
\end{multline*}
and the claimed estimate follows by Lemma \ref{lem:K}.

To estimate the plain increment \( \dd z\), we investigate first the reduced increment \( \dd ^Sz\) and then make use of Lemma \ref{lem:delta_S}. We have
	\begin{align*}
	\dd^S z_{t,s}
	& = (\mathscr R^{S^1,\zeta}_{t,s}-\mathscr R^{S^2,\zeta}_{t,s}) + (S^1_{t,s} -S^2_{t,s} )\zeta_t\cdot\dd X_{t,s} + (S^1_{t,s} -S^2_{t,s} )\zeta_t'\cdot\xx_{t,s} 
	\end{align*}
Let \( i\in \{0,1\}\).
	Using~\eqref{perturbation:sewing} for $T\leq 1$, we obtain for the first term
	\begin{align*}
	s^{\varepsilon}\Big|\mathscr R^{S^1,\zeta}_{t,s}-\mathscr R^{S^2,\zeta}_{t,s}\Big|_{\alpha-\sigma-i\gamma'}
\lesssim _{K,\tilde K,\rho_{\gamma_0}(\X)} \Gamma_{\alpha-\sigma-i\gamma'}(L^1,L^2)(t-s)^{\gamma_0 + \gamma -\sigma+i\gamma' - \varepsilon} ,
	\end{align*}
while for the second and third, we have from the definition of the spaces $\mathfrak K$:
\begin{multline*}
|(S^1-S^2)_{t,s}\dd X_{t,s}\cdot \zeta_t|_{\alpha-\sigma-i\gamma'} + |(S^1-S^2)\XX_{t,s}:\zeta'_t|_{\alpha-\sigma-i\gamma'}  
\\
\lesssim \rho_{\gamma_0}(\X)(
\|S^1-S^2\|_{(\alpha-\sigma,\alpha-\sigma-i\gamma')}|t-s|^{\gamma_0+i\gamma'}
+\|S^1-S^2\|_{(\alpha-2\sigma,\alpha-\sigma-i\gamma')}|t-s|^{2\gamma_0+i\gamma'-\sigma}
).
\end{multline*}
Part two of Lemma \ref{lem:delta_S} then shows the desired estimate.
Putting these estimates together and making use of Lemma \ref{lem:K} entails \eqref{est:perturb_dd}.
\end{proof}

\subsection{Composition of a controlled rough path with a regular enough nonlinearity}

Before closing this section, we need to collect some estimates on the composition of controlled path with a certain class of non-linearities.
As outlined in \cite{gerasimovics2020non}, when the scale considered is of the form \( (\cB_{\beta})_{\beta\in\R}\), the composition of a controlled path with any map
which is \( C^2_b\) from \(\cB_{\beta-2\gamma} \) to \(\cB_{\beta-2\gamma +\theta},\) for all \(\theta\ge0\), is again a controlled path.
In our setting, we need to be slightly more careful since by assumption the spatial regularity indices are only allowed to vary in \( J=[0,1].\)

\begin{notation} 
Given a scale \( (\cB_\beta)_{\beta\in [0,1]}\) subject to Assumption \ref{ass:intermediate},
if  \(F\colon \cB_\alpha\to \cB_\beta\) is a \( k\)-times continuously differentiable mapping, we make use of the following notation for
indices \( \beta'\in [0,1]\) and \( \alpha_i\in [0,1]\) with \( \max_{i=1\dots k}\alpha_i=\alpha\):
	\[
	|D^kF|_{(\alpha_1,\dots, \alpha_k)\to\beta'}:=\sup_{x\in \cB_{\alpha}} |D^kF(x)|_{(\alpha_1,\dots ,\alpha_k)\to\beta'}\in [0,\infty]\,,
	\]
where we recall the notation \( |D^kF(x)|_{(\alpha_1,\dots,\alpha_k)\to\beta'} =|D^kF(x)|_{\cB_{\alpha_1}\otimes\dots \otimes\cB_{\alpha_k}\to\cB_{\beta'}}\) for fixed \( x\in \cB_\alpha \).
\end{notation}
In the following, we consider the parameters
\( \frac13 <\gamma \le \gamma_0 <\frac12  \), 
\( \sigma\in [0,\gamma] \),
\( \alpha\in (1-\gamma_0-\gamma+2\sigma,1-\gamma+\sigma)\),
\( \gamma'\in(1-\gamma_0-\gamma, \alpha-\sigma)\), 
and we fix a path \( X\in C^{\gamma_0}([0,T];\R^d).\) We prove that the composition of a controlled rough path with a smooth nonlinear functions is a well-defined operation.
\begin{lemma} \label{lem:composition}
Pick any number \( \theta\in [0,1]\) such that
 \begin{equation}\label{theta_F}
\theta\in\left (\frac{1-\gamma_0-\gamma}{\gamma'},\frac{\alpha-2\sigma}{\gamma'}\right ),
\end{equation}
and consider a continuous and bounded mapping \(F\colon \cB_{\beta}\to\cB_{\beta-\sigma}\) for every \( \beta\in [\alpha-\sigma-\theta\gamma',\alpha].\)
Let \( \varepsilon= \gamma-\sigma\), take any
\((y,y') \in \cD^{\gamma,\gamma'}_{X,\alpha,\sigma}([0,T];\varepsilon)\),
 and introduce
\[
(\zeta_t,\zeta'_t) := (F(y_t),DF(y_t)\circ y'_t) , \quad t\in [0,T]. 
\]
	\begin{enumerate}[label=(\Roman*)]
		\item \label{part_I}
		Suppose that \( F \) is twice continuously differentiable from \(\cB_{\beta}\to\cB_{\beta-\sigma}\) for each \( \beta\in [\alpha-\sigma-\theta\gamma',\alpha],\) and that
		\( \|F\|_{(2)}:=\sup_{\beta\in[\alpha-\sigma-\theta\gamma',\alpha]}|F|_{C^2_b(\cB_{\beta},\cB_{\beta-\sigma})}<\infty.\)
		Then, \( (\zeta,\zeta')\) belongs to  \(\cD^{\gamma,\theta\gamma'}_{X,\alpha-\sigma,\sigma}\), and moreover
\begin{equation}
	\label{est:remainder_F}
		\begin{aligned}
			[R^\zeta]^{(2\varepsilon)}_{\gamma+\theta\gamma',\alpha-2\sigma-\theta\gamma'} 
			&\le |D^2F|_{(\alpha-\sigma,\alpha-\sigma)\to\alpha-2\sigma-\theta\gamma'}([\dd y]^{(\varepsilon)}_{\gamma,\alpha-\sigma})^2
			\\&\quad \quad 
			+|DF|_{(\alpha-\sigma-\theta\gamma')\to\alpha-2\sigma-\theta\gamma'}[R^y]^{(\varepsilon(1+\theta))}_{\gamma + \theta\gamma',\alpha-\sigma-\theta\gamma'}
		\end{aligned}
\end{equation}
\begin{equation}
	\label{est:derivative_F}
		\begin{aligned}
			[\dd \zeta']^{(2\varepsilon)}_{\theta\gamma',\alpha-2\sigma -\theta\gamma'}
			&\le |DF|_{(\alpha-\sigma-\theta\gamma')\to\alpha-2\sigma -\theta\gamma'}
			[\dd y']^{(\varepsilon(1+\theta))}_{\theta\gamma',\alpha-\sigma-\theta\gamma'} 
			\\&\quad \quad 
			+|D^2F|_{(\alpha-\sigma,\alpha-\sigma)\to\alpha-2\sigma-\theta\gamma'}
			[\dd y]^{(\varepsilon)}_{\gamma,\alpha-\sigma}|y'|^{(\varepsilon)}_{0,\alpha-\sigma}
			\quad .
		\end{aligned}
\end{equation}
		Consequently
		\begin{equation}\label{consequently_1}
		\|\zeta,\zeta'\|_{\cD^{\gamma,\theta\gamma'}_{X,\alpha-\sigma,\sigma}(\varepsilon)}
		\lesssim \|F\|_{(2)}(1+\|y,y'\|_{\cD^{\gamma,\gamma'}_{X,\alpha,\sigma}})\|y,y'\|_{\cD^{\gamma,\gamma'}_{X,\alpha,\sigma}(\varepsilon)}\,.
		\end{equation}
		\item\label{part_II}
		Suppose that \(F\) is three times continuously differentiable from \(\cB_{\beta}\to\cB_{\beta-\sigma}\) for each \( \beta\in [\alpha-\sigma-\theta\gamma',\alpha]\) and that \( \|F\|_{(3)}:=\sup_{\beta\in[\alpha-\sigma-\theta\gamma',\alpha]}|F|_{C^3_b(\cB_\beta,\cB_{\beta-\sigma})}<\infty.\)
		If \( (\bar \zeta,\bar \zeta'):=(F(\bar y),DF(\bar y)\circ \bar y')\) for another such \( (\bar y,\bar y')\in\mathcal D_{X,\alpha,\sigma}^{\gamma ,\gamma'},\) then
\begin{equation}
	\label{est:remainder_F_diff}
		\begin{aligned}
			&[R^\zeta-R^{\bar \zeta}]^{(2\varepsilon)}_{\gamma+\theta\gamma',\alpha-2\sigma-\theta\gamma'} 
			\le |D^3F|_{(\alpha,\alpha-\sigma,\alpha-\sigma)\to\alpha-2\sigma-\theta\gamma'}
			|y-\bar y|_{0,\alpha}([\dd y]^{(\varepsilon)}_{\gamma,\alpha-\sigma})^2
			\\&\quad\quad \quad
			 +|D^2F|_{(\alpha-\sigma,\alpha-\sigma)\to\alpha-2\sigma-\theta\gamma'}[\dd y-\dd \bar y]^{(\varepsilon)}_{\gamma,\alpha-\sigma}[\dd \bar y]^{(\varepsilon)}_{\gamma,\alpha-\sigma}
			\\&\quad \quad\quad \quad 
			+|D^2F|_{(\alpha-\sigma-\theta\gamma',\alpha)\to\alpha-2\sigma-\theta\gamma'}
			|y-\bar y|_{0,\alpha}[R^y]^{(2\varepsilon)}_{\gamma+\theta\gamma',\alpha-\sigma-\theta\gamma'}
			\\
			&\quad \quad \quad\quad \quad  
			+|DF|_{(\alpha-\sigma-\theta\gamma')\to\alpha-2\sigma-\theta\gamma'}
			[R^y-R^{\bar y}]^{(2\varepsilon)}_{\gamma+\theta\gamma',\alpha-\sigma-\theta\gamma'}
			\quad ,
		\end{aligned}
\end{equation}
\begin{equation}
		\label{est:derivative_F_diff}
			\begin{aligned}
			&[\dd \zeta'-\dd\bar \zeta']_{\theta\gamma',\alpha-2\sigma-\theta\gamma'}
			\le
			|D^3F|_{(\alpha,\alpha-\sigma,\alpha-\sigma)\to\alpha-2\sigma-\theta\gamma'}
			|y-\bar y|_{0,\alpha}[\dd y]_{\gamma,\alpha-\sigma} |y'|_{0,\alpha-\sigma} 
			\\
			&\quad \quad \quad 
			+|D^2F|_{(\alpha-\sigma,\alpha-\sigma)\to\alpha-2\sigma-\theta\gamma'}
			([\dd y-\dd \bar y]_{\gamma,\alpha -\sigma}|y'|^{(\varepsilon)}_{0,\alpha-\sigma} + [\dd\bar y]^{(\varepsilon)}_{\gamma,\alpha-\sigma}|y'-\bar y'|^{(\varepsilon)}_{0,\alpha-\sigma})
			\\&\quad \quad \quad \quad 
			+|D^2F|_{(\alpha,\alpha-\sigma-\theta\gamma')\to\alpha-2\sigma-\theta\gamma'}
			 |y-\bar y|_{0,\alpha}[\dd y']^{(\varepsilon(1+\theta))}_{\theta\gamma,\alpha-\sigma-\theta\gamma'}
			\\&\quad \quad \quad \quad \quad 
			+|DF|_{(\alpha-\sigma-\theta\gamma')\to\alpha-2\sigma-\theta\gamma'}
			[\dd y'-\dd\bar y']^{(\varepsilon(1+\theta))}_{\theta\gamma',\alpha-\sigma-\theta\gamma'}
		\end{aligned}
\end{equation}
	and consequently
	\begin{align}\label{consequently_2}
	&\|\zeta-\bar\zeta,\zeta'-\bar\zeta'\|_{\cD^{\gamma,\theta\gamma'}_{X,\alpha-\sigma,\sigma}(\varepsilon)} \\
	&\leq \|F\|_{(3)}(1+\|y,y'\|_{\cD^{\gamma,\gamma'}_{X,\alpha,\sigma}(\varepsilon)}+\|\bar y,\bar y'\|_{\cD^{\gamma,\gamma'}_{X,\alpha,\sigma}(\varepsilon)})\|y-\bar y,y'-\bar y'\|_{\cD^{\gamma,\gamma'}_{X,\alpha,\sigma}(\varepsilon)}.
	\end{align}
	\end{enumerate}
\end{lemma}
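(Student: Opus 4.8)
The plan is to reduce both parts to pointwise bounds on the remainder $R^\zeta_{t,s}$ and the increment $\dd\zeta'_{t,s}$ (and, in Part~\ref{part_II}, on their differences with $R^{\bar\zeta}$, $\dd\bar\zeta'$) coming from a second-order, respectively third-order, Taylor expansion of $F$, and then to reconstruct the $\cD^{\gamma,\theta\gamma'}_{X,\alpha-\sigma,\sigma}$-norm by adding up its five constituents, the auxiliary seminorms of $(y,y')$ (and of $(\bar y,\bar y')$) that appear being absorbed into the $\cD^{\gamma,\gamma'}_{X,\alpha,\sigma}$-norm via the interpolation inequality~\eqref{interpolation} (cf.~Lemma~\ref{lem:interpolation}). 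The structural fact that makes this work is that, since $\theta<\frac{\alpha-2\sigma}{\gamma'}$ forces $0<\alpha-2\sigma-\theta\gamma'$, the only spatial indices that ever occur, namely $\alpha$, $\alpha-\sigma$, $\alpha-\sigma-\theta\gamma'$ and $\alpha-2\sigma-\theta\gamma'$, all lie in $[\alpha-\sigma-\theta\gamma',\alpha]$, the range where $F$ is $C^2_b$ (resp.~$C^3_b$), and they form the descending chain $\cB_\alpha\hookrightarrow\cB_{\alpha-\sigma}\hookrightarrow\cB_{\alpha-\sigma-\theta\gamma'}$; hence every evaluation of $DF$, $D^2F$, $D^3F$ below makes sense, with operator norm bounded by the appropriate $|D^kF|_{(\cdots)\to\cdots}$, itself bounded by $\|F\|_{(2)}$, resp.~$\|F\|_{(3)}$.

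For Part~\ref{part_I} I would first settle the qualitative requirements of Definition~\ref{def:controlled}: $\zeta=F(y)\in\cC([0,T];\cB_{\alpha-\sigma})$ and $\zeta'=DF(y)\circ y'\in\cC((0,T];\cB_{\alpha-2\sigma}^d)$ hold by continuity of $F$, $DF$ on the relevant spaces together with $y\in\cC([0,T];\cB_\alpha)$, $y'\in\cC((0,T];\cB_{\alpha-\sigma}^d)$, the H\"older properties then following a posteriori from the quantitative bounds. The estimates $|\zeta|_{0,\alpha-\sigma}\le\|F\|_{(2)}$, $|\zeta'|^{(\varepsilon)}_{0,\alpha-2\sigma}\le\|F\|_{(2)}\,|y'|^{(\varepsilon)}_{0,\alpha-\sigma}$ and $[\dd\zeta]^{(\varepsilon)}_{\gamma,\alpha-2\sigma}\le\|F\|_{(2)}\,[\dd y]^{(\varepsilon)}_{\gamma,\alpha-\sigma}$ are immediate, the last one from $\dd\zeta_{t,s}=\int_0^1 DF(y_s+\tau\,\dd y_{t,s})\circ\dd y_{t,s}\,d\tau$. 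For the remainder, substituting $y'_t\cdot\dd X_{t,s}=\dd y_{t,s}-R^y_{t,s}$ and Taylor-expanding gives
\[
R^\zeta_{t,s}=-\int_0^1(1-\tau)\,D^2F\big(y_t-\tau\,\dd y_{t,s}\big)\circ(\dd y_{t,s})^{\otimes2}\,d\tau+DF(y_t)\circ R^y_{t,s},
\]
and~\eqref{est:remainder_F} follows by estimating the first term in $\cB_{\alpha-2\sigma-\theta\gamma'}$ by $|D^2F|_{(\alpha-\sigma,\alpha-\sigma)\to\alpha-2\sigma-\theta\gamma'}\,|\dd y_{t,s}|^2_{\alpha-\sigma}$ and the second by $|DF|_{(\alpha-\sigma-\theta\gamma')\to\alpha-2\sigma-\theta\gamma'}\,|R^y_{t,s}|_{\alpha-\sigma-\theta\gamma'}$, then multiplying by $s^{2\varepsilon}$, dividing by $(t-s)^{\gamma+\theta\gamma'}$, and discarding the leftover factors $(t-s)^{\gamma-\theta\gamma'}$ and $s^{\varepsilon(1-\theta)}$, whose exponents are nonnegative (because $\theta\le1$ and $\gamma'\le\gamma$) and which are therefore $\le1$ since $T\le1$. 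Likewise, writing $\dd\zeta'_{t,s}=DF(y_t)\circ\dd y'_{t,s}+\int_0^1 D^2F(y_s+r\,\dd y_{t,s})\circ(\dd y_{t,s}\otimes y'_s)\,dr$ and treating the two pieces the same way yields~\eqref{est:derivative_F}. Finally~\eqref{consequently_1} is obtained by summing the five contributions, replacing every operator norm by $\|F\|_{(2)}$, bounding $[\dd y]^{(\varepsilon)}_{\gamma,\alpha-\sigma}$ and $|y'|^{(\varepsilon)}_{0,\alpha-\sigma}$ by $\|y,y'\|_{\cD^{\gamma,\gamma'}_{X,\alpha,\sigma}}$, and bounding $[R^y]^{(\varepsilon(1+\theta))}_{\gamma+\theta\gamma',\alpha-\sigma-\theta\gamma'}$ and $[\dd y']^{(\varepsilon(1+\theta))}_{\theta\gamma',\alpha-\sigma-\theta\gamma'}$ by the same norm using interpolation between $\cB_{\alpha-\sigma}$ and $\cB_{\alpha-\sigma-\gamma'}$; the extra factor $1+\|y,y'\|_{\cD^{\gamma,\gamma'}_{X,\alpha,\sigma}}$ enters only through the quadratic terms $([\dd y]^{(\varepsilon)}_{\gamma,\alpha-\sigma})^2$ and $[\dd y]^{(\varepsilon)}_{\gamma,\alpha-\sigma}\,|y'|^{(\varepsilon)}_{0,\alpha-\sigma}$.

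For Part~\ref{part_II} the plan is to apply the two decompositions above also to $(\bar\zeta,\bar\zeta')$ and subtract, organizing each resulting difference by the usual add-and-subtract telescoping into a finite sum of products in which exactly one factor is a difference ($y-\bar y$, $\dd y-\dd\bar y$, $R^y-R^{\bar y}$, $\dd y'-\dd\bar y'$ or $y'-\bar y'$) while the remaining factors are bare seminorms of $y$, $\bar y$ or of $F$. The differences of base points of $D^2F$ arising there are converted, by the mean value theorem, into $D^3F$ evaluated at an interpolant such as $(1-\tau)(y_t-\bar y_t)+\tau(y_s-\bar y_s)$, which lies in $\cB_\alpha$ with norm $\le|y-\bar y|_{0,\alpha}$, whereas $(\dd y_{t,s})^{\otimes2}-(\dd\bar y_{t,s})^{\otimes2}=(\dd y_{t,s}-\dd\bar y_{t,s})\otimes\dd y_{t,s}+\dd\bar y_{t,s}\otimes(\dd y_{t,s}-\dd\bar y_{t,s})$ takes care of the quadratic increments. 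Performing the same index and weight bookkeeping as in Part~\ref{part_I} then gives~\eqref{est:remainder_F_diff} and~\eqref{est:derivative_F_diff}, and~\eqref{consequently_2} follows by summation, once more invoking interpolation (Lemma~\ref{lem:interpolation}) for the auxiliary seminorms of $y$ and $\bar y$.

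I expect the genuinely delicate part---and the likeliest source of slips---to be the bookkeeping inside the individual terms: for each of the many summands one must choose admissible intermediate spatial indices, confirm that they stay in $[\alpha-\sigma-\theta\gamma',\alpha]$, and check that the weight it produces (either $\varepsilon(1-\theta)$ or $\varepsilon(1+\theta)$, according to the term) together with the residual time exponent ($\gamma-\theta\gamma'$, and so on) is nonnegative, so that it can be absorbed using $T\le1$. No idea beyond those already present in~\cite{gerasimovics2020non} is needed; the only additional ingredients are the spatial-loss parameter $\sigma$ and the nontrivial origin weight $\varepsilon$, whose interplay is precisely what Lemma~\ref{lem:interpolation} is designed to absorb.
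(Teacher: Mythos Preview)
Your proposal is correct and follows essentially the same route as the paper. Both proofs rest on the same Taylor-type identities for $R^\zeta$ and $\dd\zeta'$ (you expand around $y_t$, the paper around $y_s$, which is immaterial), followed by termwise estimation in $\cB_{\alpha-2\sigma-\theta\gamma'}$ and an appeal to Lemma~\ref{lem:interpolation} to reduce the auxiliary seminorms $[R^y]^{(\varepsilon(1+\theta))}_{\gamma+\theta\gamma',\alpha-\sigma-\theta\gamma'}$ and $[\dd y']^{(\varepsilon(1+\theta))}_{\theta\gamma',\alpha-\sigma-\theta\gamma'}$ back to $\|y,y'\|_{\cD^{\gamma,\gamma'}_{X,\alpha,\sigma}}$; Part~\ref{part_II} is likewise handled in both by subtracting the two expansions and telescoping so that exactly one difference factor appears per summand.
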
 
\begin{proof}[Proof of Lemma \ref{lem:composition}]
	This is a slight refinement of  \cite[Lemma 4.7]{gerasimovics2020non}, hence we only sketch the argument.
	
	\textit{Part \ref{part_I}.}
	That \( (\zeta,\zeta') \) is a controlled path is a clear consequence of the estimates.
	To estimate the remainder, we note the Taylor-type identity
	\[
	R^\zeta_{t,s}=(\iint_{0\le \tau' \le\tau\le 1}D^2F(y_s + \tau'\dd y_{t,s})d\tau' d\tau )\circ (\dd y_{t,s})^{\otimes 2}
	+ DF(y_s)\circ R^{y}_{t,s}
	\]
and \eqref{est:remainder_F} follows immediately.
As for the Gubinelli derivative, we rely on the decomposition \( \dd \zeta'_{t,s} =DF(y_s)\circ \dd y'_{t,s} + (DF(y_t)-DF(y_s))\circ y_t'\), from which \eqref{est:derivative_F} follows.
To conclude that \eqref{consequently_1} holds, it is sufficient to combine Lemma \ref{lem:interpolation} with the obvious embedding
\( \cB_{\alpha-\sigma}^{\otimes2}\hookrightarrow \cB_{\alpha-\sigma-\theta\gamma'}^{\otimes2} \)
and use the twice continuously Fr\'echet-differentiability of \( F\).\smallskip
	
	\textit{Part \ref{part_II}.}
	For the first bound, we observe the following algebraic identity:
	\begin{multline}
		\label{id:R_zeta}
		R^{\zeta}_{t,s}-R^{\bar \zeta}_{t,s}
		=
		\iint_{0\le \tau'\le\tau\le 1}D^2F(y_s + \tau'\dd y_{t,s})d\tau'd\tau\circ (\dd y_{t,s})^{\otimes2}
		\\
		-\iint_{0\le \tau'\le\tau\le 1}D^2F(\bar y_s + \tau'\dd \bar y_{t,s})d\tau'd\tau\circ (\dd \bar y_{t,s})^{\otimes2}
		\\
		+(DF(y_s)-DF(\bar y_s))\circ R^y_{t,s}
		+DF(\bar y_s)\circ (R^y_{t,s}-R^{\bar y}_{t,s}),
	\end{multline}
	from which \eqref{est:remainder_F_diff} follows.
	Similarly
\begin{equation}\label{id:zeta_prime}
	\begin{aligned}
		\dd \zeta'_{t,s} - \dd\bar \zeta'_{t,s}
		&=
		\int_0^1(D^2F(y_s+\tau\dd y_{t,s})-D^2F(\bar y_s+\tau\dd \bar y_{t,s})d\tau)\circ (\dd y_{t,s}\otimes y'_t)
		\\&\quad 
		+ \int _0^1D^2F(\bar y_s+\tau\dd \bar y_{t,s})d\tau \circ [(\dd y_{t,s}-\dd \bar y_{t,s})\otimes y'_t + \dd \bar y_{t,s}\otimes (y'_t-\bar y'_t)]
		\\&\quad 
		+(DF(y_s)-DF(\bar y_s))\circ \dd y'_{t,s} + DF(\bar y_s)\circ (\dd y_{t,s}'-\dd\bar y'_{t,s})\,,
	\end{aligned}
\end{equation}
	which yields \eqref{est:derivative_F_diff} after evaluation in \(\cB_{\alpha-2\sigma-\theta\gamma'} .\)
	As for \ref{part_I}, we infer \eqref{consequently_2} from Lemma \ref{lem:interpolation} and our hypotheses on \( F.\)
\end{proof}

\begin{remark}
	\label{rem:other_RP}
The identities \eqref{id:R_zeta} and \eqref{id:zeta_prime} are still meaningful if the pair \( (\bar y,\bar y')\) is replaced by any element of \( \cD_{\bar X,\alpha,\sigma}^{\gamma,\gamma'}\), for another such rough path \( \bar X\in \mathscr C^{\gamma_0}(\R^d).\) 
In this case we obtain similar estimates as in \eqref{est:remainder_F_diff} through \eqref{consequently_2}.
\end{remark}

\section{Solution to subcritical quasilinear equations}
\label{sec:evolution}

In this section we work in the setting of Corollary \ref{cor:improvereg} and Lemma \ref{lem:composition}, in the sense that we are given parameters
\begin{equation}
\frac13 <\gamma < \gamma_0 <\frac12 ,
\quad 
\sigma\in [0,\gamma],
\quad 
\alpha\in (1-\gamma_0-\gamma+2\sigma,1-\gamma+\sigma],
\quad 
\gamma'\in(1-\gamma_0-\gamma, \alpha-\sigma)
\end{equation}
and we let \( \X\in C^{\gamma_0}([0,T];\R^d).\)
We rely on the controlled rough paths framework to prove existence and uniqueness for the rough PDE
	\begin{equation} \label{e:rpde2}
	du_t = (L_t(u_t)u_t +N_t(u_t))dt + \sum\nolimits_{i=1}^dF^i(u_t)d\X^i_t\quad \text{and}\quad  u_0 = x \in \cB_\alpha
	\end{equation}
for a choice of rough non-linearity \( F\colon \cB_\alpha\to \cB_{\alpha-\sigma}\). Similarly, \( N\colon[0,T]\times \cB_{\alpha}\to\cB_{\alpha-\delta}\) is a given drift non-linearity for some \( \delta<1\).\\
We recall that $V$ is the open subset of $\cB_{\eta}$ as introduced in Assumption \ref{ass:L_y}, where $\eta<\alpha$ is fixed and $V_\alpha:= V\cap \cB_\alpha$.
\begin{theorem}[Solutions of quasilinear RPDEs with bounded coefficients] \label{thm:RPDE}
Let \( \gamma_0,\gamma,\gamma'\), \(\sigma,\alpha,\X\) as above with \( \gamma'\le\gamma \) and \( \alpha<1 \).
Let moreover Assumption \ref{ass:L_y} hold true for \( L(\cdot) \), for some \( \eta \in[0,\alpha)\) with \( V =V_{y_0}=\cB_\eta\)~ for all \( y_0\in \cB_\eta \).
Fix a number \( \beta_0\in(\sigma, \alpha-\sigma-1+\gamma+\gamma_0) \) and a bounded, three-times continuously differentiable mapping \( F\colon \cB_\beta\to \cB_{\beta-\sigma} \), \( \beta\in [\beta_0,\alpha]\), such that all the constants appearing in Lemma \ref{lem:composition} are finite.
 In addition, fix \( \delta \in [0,\alpha)\) and consider for each \( t\in [0,T]\) a bounded and \( C_t\)-Lipschitz map
 \( N_t\colon \cB_{\beta}\to \cB_{\beta-\delta} \) for \( \beta\in [\alpha,\alpha+\gamma-\sigma]\) with
  \( \|N\|_1=\sup_{t\in [0,T]}C_t\vee |N_t|_{C^0_b}<\infty.\)
 
	For every $x\in\cB_\alpha$
	there exists a unique continuous path \( u\colon [0,T]\to \cB_{\alpha}\) with \( u\in \cC^{0,\gamma-\sigma}(\cB_{\alpha+\gamma-\sigma})\) such that \((u,F(u))\) belongs to  \(\cD^{\gamma,\gamma'}_{X,\alpha ,\sigma}([0,T];\gamma-\sigma)\) and
	\begin{equation} \label{e:rpde3}
		u_t = S^u_{t,0}x + \int_0^t S^u_{t,r}N_r(u_r)dr+ \int_0^t S^u_{t,r}F(u_r)\cdot d\X_r\,,
		\quad \text{for all }t\in [0,T]\,,
	\end{equation}
	where $S^u=\exp(\int L_r(u_r)dr)$ is the evolution family generated by $t\mapsto L_t(u_t)$, recall Remark \ref{rem:evolution}.
\end{theorem}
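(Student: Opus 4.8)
The plan is to solve \eqref{e:rpde3} by a Banach fixed point argument carried out directly in the weighted controlled–rough–path space, treating the quasilinear coupling $u\mapsto S^u$ by ``freezing and feeding back''. Fix $\varepsilon=\gamma-\sigma\ge0$, fix $\theta$ as in Corollary \ref{cor:improvereg} and in Lemma \ref{lem:composition}, and for a small time $T_0\in(0,T]$ and radii $R,R'>0$ to be chosen let $E=E_{T_0,R,R'}$ be the set of pairs $(y,y')\in\cD^{\gamma,\gamma'}_{X,\alpha,\sigma}([0,T_0];\varepsilon)$ with $y_0=x$, $\|y,y'\|_{\cD^{\gamma,\gamma'}_{X,\alpha,\sigma}}\le R$ and, in addition, $|y|_{\mu_0,\eta}\le R'$ for a fixed exponent $\mu_0\in(0,\min(\varrho,\gamma_0,\alpha-\eta))$; this last constraint, together with Assumption \ref{ass:L_y} (in particular \eqref{lip_constant} and the uniform resolvent bound granted by $V_{y_0}=\cB_\eta$) and the interpolation property \eqref{interpolation}, guarantees that $t\mapsto L_t(y_t)$ satisfies Assumption \ref{ass:L_t} uniformly over $E$, so that $S^y:=\exp(\int L_r(y_r)dr)$ is a well-defined evolution family obeying \ref{P1}--\ref{P4star} with constants $K,\tilde K$ depending only on the data and on $R,R'$ (Remark \ref{rem:evolution}). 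On $E$ I define
\[
\mathcal M(y,y'):=\Big(S^y_{\cdot,0}x+\int_0^{\cdot}S^y_{\cdot,r}N_r(y_r)\,dr+\int_0^{\cdot}S^y_{\cdot,r}F(y_r)\cdot d\X_r\,,\ F(y)\Big),
\]
where the rough integral is the one of Theorem \ref{thm:integral} applied to the controlled integrand $(F(y),DF(y)\circ y')\in\cD^{\gamma,\theta\gamma'}_{X,\alpha-\sigma,\sigma}$ produced by part \ref{part_I} of Lemma \ref{lem:composition}; by construction any fixed point of $\mathcal M$ is a solution of \eqref{e:rpde3} with $u'=F(u)$.

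\emph{Self-mapping.} For $(y,y')\in E$, the first component of $\mathcal M(y,y')$ starts at $x$, and I bound its $\cD^{\gamma,\gamma'}_{X,\alpha,\sigma}$-norm summand by summand. The semigroup term $(S^y_{\cdot,0}x,0)$ lies in $\cD^{\gamma,\gamma'}_{X,\alpha,\sigma}([0,T_0];\varepsilon)$ with norm $\le K|x|_\alpha+C(|x|_\alpha)T_0^{\kappa}$ for some $\kappa>0$ — this is precisely where the weight $\varepsilon=(\gamma-\sigma)_+$ at the origin is essential, since $S^y_{\cdot,0}x$ is not H\"older into $\cB_{\alpha-\sigma}$ near $0$; one uses \eqref{smoothing_S}--\eqref{anti_smoothing_S} as in Lemma \ref{lem:weight}. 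The drift term has vanishing Gubinelli derivative and, since $N$ maps $\cB_\beta\to\cB_{\beta-\delta}$ boundedly with $\delta<\alpha<1$, a Young/Bochner estimate via \eqref{smoothing_S} gives an $\cD$-norm $\lesssim\|N\|_1T_0^{1-\delta}$. Finally the rough convolution is controlled by Corollary \ref{cor:improvereg}, whose bounds \eqref{est:integ_dd}--\eqref{est:integ_dd_prime} give an $\cD^{\gamma,\gamma'}_{X,\alpha,\sigma}$-norm $\lesssim T_0^{\gamma_0-\gamma}\|F(y),DF(y)\circ y'\|_{\cD^{\gamma,\theta\gamma'}_{X,\alpha-\sigma,\sigma}}\lesssim T_0^{\gamma_0-\gamma}\|F\|_{(2)}(1+R)R$ by \eqref{consequently_1}. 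The same three summands are checked to be $\mu_0$-H\"older into $\cB_\eta$ (using $\eta<\alpha$, \eqref{smoothing_S}--\eqref{anti_smoothing_S} and \eqref{est:holder_rho}) with norm $\le C|x|_\alpha+C(R,R')T_0^{\kappa'}$. Choosing $R\simeq|x|_\alpha$, then $R'$, and finally $T_0$ small enough that all the $T_0^{(\cdot)}$-contributions are absorbed, yields $\mathcal M(E)\subset E$.

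\emph{Contraction.} For $(y,y'),(\bar y,\bar y')\in E$, write $\mathcal M(y,y')-\mathcal M(\bar y,\bar y')$ as the sum of (a)~$\big((S^y-S^{\bar y})_{\cdot,0}x,0\big)$, (b)~the corresponding difference of the drift terms, and (c)~the difference of the two rough convolutions. Term (b) is treated as in the self-mapping step, splitting $S^yN(y)-S^{\bar y}N(\bar y)=(S^y-S^{\bar y})N(y)+S^{\bar y}(N(y)-N(\bar y))$ and using the Lipschitz bound on $N$ together with the propagator-difference estimates of Appendix \ref{app:perturbation}; it contributes $T_0^{\kappa}\|y-\bar y,y'-\bar y'\|_{\cD}$. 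Term (c) is split likewise into $\int(S^y-S^{\bar y})F(y)\,d\X+\int S^{\bar y}(F(y)-F(\bar y))\,d\X$: the first piece is bounded by Corollary \ref{c:perturbation}, giving $\lesssim\Gamma_\alpha(L^y,L^{\bar y})T_0^{\gamma_0-\gamma}\|F(y),DF(y)\circ y'\|_{\cD^{\gamma,\theta\gamma'}}$, and since $\alpha<1$ one has $\Gamma_\alpha(L^y,L^{\bar y})=|L(y)-L(\bar y)|_{0,\cB_1\to\cB_0}\le\ell\sup_t|y_t-\bar y_t|_\eta\lesssim\|y-\bar y,y'-\bar y'\|_{\cD}$ by \eqref{lip_constant}; the second piece follows from linearity of the rough integral in the integrand, Corollary \ref{cor:improvereg} and part \ref{part_II} of Lemma \ref{lem:composition} (estimate \eqref{consequently_2}), giving $\lesssim T_0^{\gamma_0-\gamma}\|F\|_{(3)}(1+R)\|y-\bar y,y'-\bar y'\|_{\cD}$. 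The delicate point — and the main obstacle — is term (a): the two evolution families are applied to the \emph{same} datum $x\in\cB_\alpha$, and $t\mapsto(S^y_{t,0}-S^{\bar y}_{t,0})x$ is in general not $\gamma$-H\"older into $\cB_{\alpha-\sigma}$ without a weight, exactly the difficulty highlighted in the introduction; the remedy is to invoke the weighted perturbation estimates for $S^1-S^2$ from Appendix \ref{app:perturbation}, which control $\|S^y-S^{\bar y}\|_{\mathfrak K(\cdot,\cdot)}$ by $\Gamma_\alpha(L^y,L^{\bar y})$ times a positive power of $T_0$, and then to combine them with the weight $\varepsilon$ at the origin exactly as in the analysis of $(S_{\cdot}x,0)$ above, obtaining $\|(S^y-S^{\bar y})_{\cdot,0}x,0\|_{\cD^{\gamma,\gamma'}_{X,\alpha,\sigma}}\lesssim|x|_\alpha T_0^{\kappa}\|y-\bar y,y'-\bar y'\|_{\cD}$. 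Adding (a)--(c) and shrinking $T_0$ so the total prefactor is $\le\tfrac12$, the contraction mapping theorem on the complete metric space $E$ produces a unique solution on $[0,T_0]$.

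\emph{From $[0,T_0]$ to $[0,T]$.} Uniqueness in the stated solution class on $[0,T_0]$ follows because any two solutions obey the same contraction estimate on a possibly smaller interval and hence coincide there, after which a standard bootstrap extends the agreement to all of $[0,T_0]$. To reach the full horizon one runs the usual continuation scheme: the solution extends to a maximal interval $[0,\tau_{\max})$, and on it — using crucially that $F$ is \emph{bounded} (so $F(u)$ and $DF(u)\circ F(u)$ stay bounded in the relevant $\cB_\beta$) and that $N$ is bounded — one derives an a priori bound on $\sup_{t<\tau_{\max}}|u_t|_\alpha$ and on the controlled-rough-path norm of $(u,F(u))$ over compact subintervals, by covering $[0,\tau_{\max})$ with finitely many intervals of a length depending only on the data (no weight being needed away from $0$) and iterating the estimates of the previous steps; this a priori control excludes any blow-up and forces $\tau_{\max}=T$. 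I expect the single hardest ingredient to remain term (a) of the contraction — quantifying the difference of two propagators applied to a fixed $\cB_\alpha$-datum in the weighted controlled-rough-path norm — which is exactly what the combination of the weight at the origin and the perturbation estimates of Appendix \ref{app:perturbation} is engineered to resolve.
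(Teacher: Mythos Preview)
Your overall architecture matches the paper's --- a Banach fixed point for the map $\mathcal M$ on a weighted controlled-path ball, using Lemma~\ref{lem:weight}, Corollary~\ref{cor:improvereg}, Lemma~\ref{lem:composition} for the self-mapping and Corollary~\ref{c:perturbation} plus Lemma~\ref{lem:composition}\ref{part_II} for the contraction --- but there is a genuine gap in your treatment of term~(a), and it is precisely the one you flag as ``the single hardest ingredient''.

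You assert that Appendix~\ref{app:perturbation} controls $\|S^y-S^{\bar y}\|_{\mathfrak K(\cdot,\cdot)}$ by $\Gamma_\alpha(L^y,L^{\bar y})$ \emph{times a positive power of $T_0$}. This is false: Lemma~\ref{lem:K} gives $\|S^1-S^2\|_{(\beta,\beta')}\lesssim\Gamma_\alpha(L^1,L^2)$ with \emph{no} factor of $T$ (the $(t-s)^{\beta-\beta'}$ appearing in its proof is absorbed into the definition of the $\mathfrak K$-norm), and likewise Lemma~\ref{lem:weight} part~2 yields $\|(S^y-S^{\bar y})_{\cdot,0}x,0\|_{\cD}\lesssim\Gamma_\alpha(L^y,L^{\bar y})$ with no small prefactor. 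Since you estimate $\Gamma_\alpha\le\ell\sup_t|y_t-\bar y_t|_\eta\lesssim\|y-\bar y,y'-\bar y'\|_{\cD}$ via the embedding $\cB_\alpha\hookrightarrow\cB_\eta$, term~(a) contributes a constant of order $|x|_\alpha$ to the Lipschitz bound, and you cannot make it $\le\tfrac12$ by shrinking $T_0$. Your H\"older constraint $|y|_{\mu_0,\eta}\le R'$ does not help as stated: used alone it gives $|y_t-\bar y_t|_\eta\le 2R'T_0^{\mu_0}$, which is small but carries no factor of the difference.

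The missing idea is that the power of $T$ must come from $\Gamma_\alpha$ itself, exploiting $y_0=\bar y_0=x$. The paper achieves this by enlarging the \emph{contraction metric} (not just the ball) to include an unweighted H\"older seminorm $[\dd y]_{\varrho,\alpha-\varrho}$ for some small $\varrho\in(0,\alpha-\eta)$, and then interpolating $\cB_\eta$ between $\cB_{\alpha-\varrho}$ and $\cB_\alpha$ to obtain
\[
|y_t-\bar y_t|_\eta\lesssim t^{\vartheta\varrho}\,[\dd(y-\bar y)]_{\varrho,\alpha-\varrho}^{\vartheta}\,|y-\bar y|_{0,\alpha}^{1-\vartheta},\qquad \vartheta=\tfrac{\alpha-\eta}{\varrho},
\]
hence $\Gamma_\alpha(L^y,L^{\bar y})\lesssim T_0^{\vartheta\varrho}\,\vvert y-\bar y,y'-\bar y'\vvert$ in the extended norm. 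This single step then makes \emph{all} the ``propagator-difference'' contributions --- including term~(a) --- genuinely small in $T_0$. To fix your argument, replace the metric on $E$ by one that additionally contains $[\dd y]_{\varrho,\alpha-\varrho}$ (or equivalently $[\dd y]_{\mu_0,\eta}$), verify self-mapping for this extra seminorm via \eqref{est:holder_rho} and its analogue in Corollary~\ref{c:perturbation}, and close the contraction as above.
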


The next subsections are devoted to the proofs of Theorem~\ref{thm:RPDE} and Theorem~\ref{thm:main}. We begin with Theorem~\ref{thm:RPDE} and then drop the boundedness assumptions on the coefficients in Subsection~\ref{ssec:cast_aside_bd}.

\subsection{Proof of the main results}
\label{ssec:prf_thm_RPDE}

First of all we quantify the effect of the initial condition \( x\in\cB_{\alpha}\) in the fixed point argument and state the following result.
\begin{lemma}\label{lem:weight}
	\begin{itemize}
	\item [1)]
	Let \( S=S_{t,s}=\exp(\int_s^tL_rdr)\) for \( L \) as in Assumption \ref{ass:L_t}, let \( x\in \cB_{\alpha}\) and set \( (f^x_t,f^{x,\prime}_t):=(S_{t,0}x,0).\)
	Then \( f^x\) belongs to \( \cD_{X,\alpha,\sigma}^{\gamma,\gamma'}(0,T;\gamma-\sigma)\)
	and 
	\[
	\|f^x,0\|_{\cD^{\gamma,\gamma'}_{X,\alpha,\sigma}([0,T];\gamma-\sigma)}\le K(1+2\tilde K)|x|_{\alpha}.
	\]
	\item [2)] Letting \( \bar f^x_t=\bar S_{t,0}x \), for another evolution family \( \bar{S}=\exp(\int \bar L_rdr) \), we have the stability estimate (recall \eqref{nota:Gamma})
	\[
	\|f^x-\bar f^x,0\|_{\cD^{\gamma,\gamma'}_{X,\alpha,\sigma}([0,T];\gamma-\sigma)}\le C\Gamma_\alpha(L_t,\bar L_t)\,.
\]
	\end{itemize}
\end{lemma}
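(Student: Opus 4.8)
The plan is to verify each of the five weighted seminorms appearing in \eqref{new_norm} for the pair $(f^x,0)=(S_{\cdot,0}x,0)$, with weight $\varepsilon=(\gamma-\sigma)_+=\gamma-\sigma$ (recall $\sigma\le\gamma$), and then deduce the stability estimate from the difference estimates for propagators in Appendix \ref{app:perturbation}. Since $f^{x,\prime}\equiv0$, one immediately has $R^{f^x}_{t,s}=\dd f^x_{t,s}$ and $\dd f^{x,\prime}\equiv0$, so only three quantities need bounding: $|f^x|_{0,\alpha}$, $[\dd f^x]^{(\varepsilon)}_{\gamma,\alpha-\sigma}$, and $[R^{f^x}]^{(2\varepsilon)}_{\gamma+\gamma',\alpha-\sigma-\gamma'}$ — but the latter two coincide once we observe that $[R^{f^x}]^{(2\varepsilon)}_{\gamma+\gamma',\alpha-\sigma-\gamma'}$ must be controlled, and since $R^{f^x}=\dd f^x$, it suffices to bound $\dd f^x$ in the space $\cB_{\alpha-\sigma-\gamma'}$ with time exponent $\gamma+\gamma'$ and weight $2\varepsilon$, and in $\cB_{\alpha-\sigma}$ with exponent $\gamma$ and weight $\varepsilon$.

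\textbf{Part 1).} First, $|f^x|_{0,\alpha}=\sup_t|S_{t,0}x|_\alpha\le K|x|_\alpha$ by the smoothing estimate \eqref{smoothing_S} (with $\alpha\to\alpha$, exponent $0$). Next, for $0<s\le t$ write, using the identity \eqref{id_delta_S} with respect to the constant family and the fact that $f^x$ solves the linear equation, $\dd f^x_{t,s}=S_{t,0}x-S_{s,0}x=(S_{t,s}-I)S_{s,0}x$. Applying \eqref{anti_smoothing_S} with indices $\alpha\to\alpha-\sigma$ (loss $\sigma$) gives $|(S_{t,s}-I)|_{\alpha\to\alpha-\sigma}\lesssim\tilde K|t-s|^{\sigma}$, while $|S_{s,0}x|_\alpha\le K|x|_\alpha$; this is not yet enough to get the $\gamma$-H\"older bound with the right weight, so instead I split: $|(S_{t,s}-I)S_{s,0}x|_{\alpha-\sigma}\le |(S_{t,s}-I)|_{\alpha-\gamma+\sigma\to\alpha-\sigma}\,|S_{s,0}x|_{\alpha-\gamma+\sigma}$, estimate the first factor by $\tilde K|t-s|^{\gamma}$ via \eqref{anti_smoothing_S} (the exponent is $(\alpha-\gamma+\sigma-(\alpha-\sigma))_+=(2\sigma-\gamma)_+$; if this is positive one uses an even lower starting index), and the second by $|S_{s,0}x|_{\alpha-\gamma+\sigma}\le K s^{-(\gamma-\sigma)}|x|_\alpha=Ks^{-\varepsilon}|x|_\alpha$ via \eqref{smoothing_S}. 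Hence $s^{\varepsilon}|\dd f^x_{t,s}|_{\alpha-\sigma}\lesssim K\tilde K|t-s|^{\gamma}|x|_\alpha$, giving $[\dd f^x]^{(\varepsilon)}_{\gamma,\alpha-\sigma}\lesssim K\tilde K|x|_\alpha$. An entirely analogous split — writing $\dd f^x_{t,s}=(S_{t,s}-I)S_{s,0}x$, applying \eqref{anti_smoothing_S} from $\cB_{\alpha-\gamma-\gamma'+\sigma}$ to $\cB_{\alpha-\sigma-\gamma'}$ (exponent $\gamma+\gamma'$) and \eqref{smoothing_S} from $\cB_\alpha$ to $\cB_{\alpha-\gamma-\gamma'+\sigma}$ (exponent $(\gamma+\gamma'-\sigma)_+$, bounded by $s^{-(\gamma+\gamma'-\sigma)}=s^{-(\varepsilon+\gamma')}$, and then by $s^{-2\varepsilon}$ since $\gamma'\le\gamma$ forces $\varepsilon+\gamma'\ge 2\varepsilon$ is false in general — here one must be slightly careful and instead bound by $s^{-2\varepsilon}$ using $\gamma'\le\gamma$, i.e. $\gamma+\gamma'-\sigma\le 2(\gamma-\sigma)+\sigma$; this holds since $\gamma'\le\gamma\le 2\gamma-\sigma$ when $\sigma\le\gamma$) — gives $[R^{f^x}]^{(2\varepsilon)}_{\gamma+\gamma',\alpha-\sigma-\gamma'}\lesssim K\tilde K|x|_\alpha$. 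Collecting the three contributions yields $\|f^x,0\|_{\cD^{\gamma,\gamma'}_{X,\alpha,\sigma}}\le K(1+2\tilde K)|x|_\alpha$ after absorbing constants.

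\textbf{Part 2).} For the stability estimate, set $Q_{t,s}:=S_{t,s}-\bar S_{t,s}$. Then $f^x_t-\bar f^x_t=(S_{t,0}-\bar S_{t,0})x=Q_{t,0}x$, and the difference of the controlled-path components is again of the form $(w,0)$ with $w_t=Q_{t,0}x$. Exactly the same three computations as in Part 1) apply verbatim, with every occurrence of a single propagator $S$ (or $S-I$) in the relevant operator norm replaced by the corresponding norm of $Q=S^1-S^2$ in the spaces $\mathfrak K(\cdot,\cdot)$; by Lemma \ref{lem:K} (the perturbation bounds of Appendix \ref{app:perturbation}) each such norm is $\lesssim\Gamma_\alpha(L_t,\bar L_t)$, with the $T^\varrho|L^1-L^2|_{\varrho}$ term entering only when a top index equal to $1$ is needed (which does not occur here since $\alpha<1$ is not assumed in Part 2, but the constant in \eqref{nota:Gamma} is defined to cover both cases). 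This produces $\|f^x-\bar f^x,0\|_{\cD^{\gamma,\gamma'}_{X,\alpha,\sigma}}\le C\,\Gamma_\alpha(L_t,\bar L_t)$.

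\textbf{Main obstacle.} The only delicate point is bookkeeping of the interpolation exponents in the split $\dd f^x_{t,s}=(S_{t,s}-I)S_{s,0}x$: one must choose the intermediate index so that the anti-smoothing exponent from \eqref{anti_smoothing_S} exactly matches the required H\"older exponent ($\gamma$, resp. $\gamma+\gamma'$) while the smoothing loss from \eqref{smoothing_S} produces a blow-up rate $s^{-\varepsilon}$ (resp. $s^{-2\varepsilon}$) that is \emph{not worse} than the prescribed weight — this is where the hypotheses $\sigma\le\gamma$ and $\gamma'\le\gamma$ are used, and it requires checking that all intermediate indices stay in $[0,1]$, which is where the standing constraints on $\alpha$ ($\alpha\in(1-\gamma_0-\gamma+2\sigma,1-\gamma+\sigma]$) come into play. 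Everything else is a direct application of \eqref{smoothing_S}, \eqref{anti_smoothing_S}, and Lemma \ref{lem:K}.
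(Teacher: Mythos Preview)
Your overall strategy coincides with the paper's: write $\dd f^x_{t,s}=(S_{t,s}-I)S_{s,0}x$, pass through an intermediate space, and combine \eqref{smoothing_S} with \eqref{anti_smoothing_S}. However, there are two concrete execution errors.

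\textbf{Part 1).} Your intermediate index has the wrong sign. To extract $|t-s|^{\gamma}$ from $|(S_{t,s}-I)|_{\beta\to\alpha-\sigma}$ via \eqref{anti_smoothing_S} you need $\beta-(\alpha-\sigma)=\gamma$, i.e.\ $\beta=\alpha+\gamma-\sigma=\alpha+\varepsilon$, not $\alpha-\gamma+\sigma$. With the correct choice one gets, uniformly for $i\in\{0,1\}$,
\[
|\dd f^x_{t,s}|_{\alpha-\sigma-i\gamma'}\le \|S-I\|_{(\alpha+\varepsilon,\alpha-\sigma-i\gamma')}|S_{s,0}x|_{\alpha+\varepsilon}\le K\tilde K\,|t-s|^{\gamma+i\gamma'}s^{-\varepsilon}|x|_\alpha,
\]
since $(\alpha+\varepsilon)-(\alpha-\sigma-i\gamma')=\gamma+i\gamma'$. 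This is exactly the paper's computation. Note that the blow-up is only $s^{-\varepsilon}$ in both cases ($i=0$ and $i=1$), so for the remainder seminorm one has $s^{2\varepsilon}|\dd f^x_{t,s}|_{\alpha-\sigma-\gamma'}\le K\tilde K\, s^{\varepsilon}|t-s|^{\gamma+\gamma'}|x|_\alpha$, and no delicate comparison between $\gamma'$ and $\varepsilon$ is needed. Your attempted inequality $\gamma+\gamma'-\sigma\le 2\varepsilon$ is equivalent to $\gamma'\le\gamma-\sigma$, which is \emph{not} assumed; the correct intermediate index avoids this issue entirely.

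\textbf{Part 2).} The claim that ``the same three computations apply verbatim with $S$ replaced by $Q=S-\bar S$'' fails because $Q$ is not multiplicative: you cannot write $Q_{t,0}=Q_{t,s}Q_{s,0}$, so the identity $\dd w_{t,s}=(Q_{t,s}-I)Q_{s,0}x$ is false. The paper instead uses the decomposition
\[
\dd(f^x-\bar f^x)_{t,s}=(S_{t,s}-\bar S_{t,s})S_{s,0}x+(\bar S_{t,s}-I)(S_{s,0}-\bar S_{s,0})x,
\]
and then bounds each piece separately: the first via $\|S-\bar S\|_{(\alpha+\varepsilon,\alpha-\sigma-i\gamma')}$ and $|S_{s,0}x|_{\alpha+\varepsilon}\le Ks^{-\varepsilon}|x|_\alpha$, the second via $\|\bar S-I\|_{(\alpha+\varepsilon,\alpha-\sigma-i\gamma')}$ and $|(S_{s,0}-\bar S_{s,0})x|_{\alpha+\varepsilon}\le \|S-\bar S\|_{(\alpha,\alpha+\varepsilon)}s^{-\varepsilon}|x|_\alpha$. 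Both difference norms are then controlled by $\Gamma_\alpha(L,\bar L)$ through Lemma~\ref{lem:K}. Once you insert this decomposition, the rest of your argument for Part 2) goes through.
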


\begin{proof}
The first statement is obvious from the properties of $(y,y')$ combined with the fact that $y_t = S_{t,0} y_t + (\id - S_{t,0})y_t$
and the standard estimates
 $|S_{t,0} y_t|_{\alpha} \leq |S_{t,0}|_{\alpha+\varepsilon\to\alpha}|y_t|_{\alpha+\varepsilon}\lesssim_K T^\varepsilon |y|_{0,\alpha+\varepsilon}$.\\
Concerning the second statement, let \( f=f^x\) and \( \varepsilon=\gamma-\sigma\).
Obviously $|f_t|_{\alpha} \leq K |x|_\alpha$ and
	\[
	|f_t|_{\alpha+\varepsilon}\le K t^{-\varepsilon}|x|_{\alpha},
	\]
	thus \( t^\varepsilon|f|_{\alpha+\varepsilon}\le K|x|_{\alpha}<\infty.\)\\
	Moreover, if \( i=0,1\) we have
	\begin{align*}
	|\dd f_{t,s}|_{\alpha-\sigma-i\gamma'} =
	|(S_{t,s}-I)S_{s,0}x|_{\alpha-\sigma-i\gamma'}
&	\lesssim  |t-s|^{i\gamma'+\gamma}\|S-I\|_{(\alpha+\varepsilon,\alpha-\sigma-\gamma')}|S_{s,0}x|_{\alpha+\varepsilon}\\
&	\lesssim K\tilde K|t-s|^{i\gamma'+\gamma}s^{-\varepsilon}|x|_{\alpha},
	\end{align*}
	which proves the second claim.\\
The third assertion is similar, noting the identity \( \dd(f^x-\bar f^x)_{t,s} = (S_{t,s}-\bar S_{t,s})S_{0,s}x + (\bar S_{t,s}-I)(S_{s,0}-\bar S_{s,0})x\). Details are omitted.
\end{proof}

Now, given an evolution family \( S \) and a controlled path \( (y,y')\in\cD^{\gamma,\gamma'}_{X,\alpha,\sigma} \), we introduce the map
\begin{equation}\label{soln_map}
\psi_{T;N,F}(S;y, y')_t :=\Big(S_{t,0}x+\int_0^tS_{t,s}N_s(y_s)ds+\int_0^tS_{t,s}F(y_s)\cdot d\X_s, F(y_t)\Big)\,.
\end{equation}
The proof is based on a fixed point argument for the map \( (y,y')\mapsto \psi_{T;N,F}(S^y;y,y') ,\) where \( S^y_{t,s}=\exp(\int_s^tL_r(y_r)dr)\).
We assume without loss of generality that \( \varepsilon:=\gamma-\sigma>0\) (otherwise introduce \( \tilde \gamma:=\frac{\gamma+\gamma_0}{2}\) and observe that the corresponding statement implies the conclusion). 
We further fix two parameters
\( \gamma '\in(1-2\gamma,\sigma]\), \( \varrho \in (0,(\alpha-\eta)\wedge \delta) \) and a constant
\( R\ge K(1+2\tilde K)|x|_\alpha + |F|_{C^0_b}\).
Let us now introduce
\begin{multline*}
	B_T(x)= \Big\{(y,y') \in \cD^{\gamma,\gamma'}_{X,\alpha ,\sigma}(0,T), \enskip \text{with}\enskip y_0 = x,\; y'_0 = F(x),\,\forall t\in [0,T], 
	\text{ such that:}
	\\
	\text{\textbullet }\quad  y_t\in V_\alpha\quad \text{(resp.\ }y_t\in V_{\alpha+\varepsilon}\text{)},\quad \,\forall t\in [0,T]\quad\text{(resp.\ }\forall t\in (0,T] \text{);}
		\\
	\text{\textbullet }\quad |y|_{0,\alpha+\varepsilon}^{(\varepsilon)} + [\dd y]_{\varrho,\alpha-\varrho}
\le R +1
	\\
	\text{\textbullet } \quad  \|y,y'\|_{\cD_{X,\alpha,\sigma}^{\gamma,\gamma'}(\varepsilon)}\equiv|y|_{0,\alpha} +[\dd y]^{(\varepsilon)}_{\gamma,\alpha-\sigma} + |y'|^{(\varepsilon)}_{0,\alpha-\sigma}+[\dd y']^{(2\varepsilon)}_{\gamma ',\alpha-\sigma-\gamma'} + [R^y]^{(2\varepsilon)}_{\gamma+\gamma ',\alpha -\sigma-\gamma'} 
	\leq R + 1 \Big\}\,.
\end{multline*}
It is clear that \( B_T(x)\neq \emptyset\) since Lemma \ref{lem:weight} asserts in particular that \( t\mapsto(f^x_t,0)=(S^x_{t,0}x,0)\) satisfies the needed requirements. Moreover, letting \( \nn{y,y'}:=|y|^{(\varepsilon)}_{0,\alpha+\varepsilon}+[\dd y]_{\varrho,\alpha-\varrho}+\|y,y'\|_{\cD_{X,\alpha,\sigma}^{\gamma,\gamma'}(\varepsilon)}\), it is plain to check that
 \( (B_T(x),\nn{\cdot})\) is a complete space (we omit the proof).

	\subsubsection{Invariance of $B_T(x)$}
	
 We assume without loss of generality that \( T\le 1.\)
		Let $(y,y') \in B_T(x)$, denote  \( \psi:=\psi_{T;N,F}\) and write
		\[\begin{aligned}
		 \psi (S^y;y,y')_t
		&=\Big(S^y_{t,0}x + \int_0^t S^y_{t,r} N_r(y_r)dr + \int_0^t S^y_{t,r}F(y_r)\cdot d\X_r\,,\, F(y_t)\Big)
		\\&
		=: (f^x_t,0) + (D_t,0) + (z_t, z'_t)
		\end{aligned}
		\]
		for \( f^x:=(S_{t,0}^yx,0)\) and \( (z ,z '):=\Big(\int_0^t S^y_{t,r}\zeta_r \cdot d\X_r\,,\,\zeta_t \Big)\), where \( (\zeta,\zeta')=(F(y_t), DF(y_t)\circ y'_t) \).\\
			We start by estimating the drift term \( D_t\). Using \eqref{smoothing_S} we obtain 
		that
		\begin{align*}
		t^{\varepsilon}|D_t|_{\alpha+\varepsilon}
		=\Big|\int_0^tS^y_{t,r}N_r(y_r)dr\Big|_{\alpha+\gamma-\sigma}
	&	\lesssim \|N\|_{1}\|S^y\|_{(\alpha+\varepsilon-\delta,\alpha+\varepsilon)}(1+R)t^{\varepsilon}\int _0^t(t-r)^{-\delta }r^{-\varepsilon}dr\\
	&	\lesssim 
		T^{1-\delta}\,.
		\end{align*}	
		As a further consequence, Lemma \ref{lem:delta_S} is applicable and we may now focus on the evaluation of reduced increment \( \dd^{S^y} D_{t,s}=D_t-S^y_{t,s}D_s\). For \( i=0,1\), we have
		\[
		\begin{aligned}
		|\dd^{S^y}D_{t,s}|_{\alpha-\sigma-i\gamma'}=\Big|\int _s ^tS^y_{t,r}N_r(y_r) dr\Big|_{\alpha-\sigma-i\gamma'}
		&\le 
		\int_s^t |S^y_{t,r}|_{\alpha-\delta\to \alpha-\sigma-i\gamma'} |N_r(y_r)|_{\alpha-\delta}dr
		\\&
		\lesssim _{K,\|N\|_{1}} (t-s)^{1-(\delta-\sigma-i\gamma')_+}.
		\end{aligned}
		\]
	Observing similarly that \( |\dd^{S^y}D_{t,s}|_{\alpha-\varrho}\lesssim (t-s)^{1+\varrho-\delta} \), we therefore	
	obtain from Lemma \ref{lem:delta_S} that
\begin{multline}
\label{drift_term}
\nn{D,0}=|D|_{0,\alpha+\varepsilon}^{(\varepsilon)} + [\dd D]_{\varrho,\alpha-\varrho}+\|(D,0)\|_{\mathcal D_{X,\alpha ,\sigma}^{\gamma,\gamma '}(\varepsilon)}
\\
\lesssim (|D|^{(\varepsilon)}_{0,\alpha+\varepsilon} + T^\varepsilon\vee_{i=0,1}[\dd^{S} D]_{\gamma+i\gamma',\alpha-\sigma-i\gamma'})
\\
\lesssim 
_{\|N\|_{1},K,\tilde K,|x|_{\alpha}} T^{\varepsilon}+T^{1 - (\delta -\sigma-\gamma')_+ +\varepsilon } + T^{1-\delta}\,.
\end{multline}
Next, pick any \( \theta\in(\frac{1-\gamma_0-\gamma}{\gamma'},\frac{\alpha-\sigma-\beta_0}{\gamma'}] \) which is always possible from the upper bound on \( \beta_0\), and note that for such choice we have \( [1-\sigma - \theta\gamma',\alpha]\subset [\beta_0,\alpha] \). Applying Corollary \ref{cor:improvereg}, Lemma \ref{lem:composition} and using our hypotheses on \( F \), we infer that
\begin{multline*}
\nn{z,z'}=|z|^{(\varepsilon)}_{0,\alpha+\varepsilon}+[\dd z]_{\varrho,\alpha-\varrho}+\|z,z'\|_{\cD_{X,\alpha,\sigma}^{\gamma,\gamma'}(\varepsilon)}
\\
\lesssim _{\rho_\gamma(\X),K,\tilde K} 
[T^{\gamma_0-\gamma} + T^{\varepsilon} + T^{2\varepsilon-\sigma(\frac{\gamma'}{\sigma}-1)_+}] \|\zeta,\zeta'\|_{\cD^{\gamma,\theta\gamma'}_{X,\alpha-\sigma,\sigma}(\varepsilon)} 
\\
\leq C_0(\rho_\gamma(\X),K,\tilde K,\|F\|_{(2)})
[T^{\gamma_0-\gamma} + T^{\varepsilon} + T^{2\varepsilon-\sigma(\frac{\gamma'}{\sigma}-1)_+}] 
\|y,y'\|_{\cD^{\gamma,\gamma'}_{X,\alpha,\sigma}(\varepsilon)} ,
\end{multline*}
where the constant \( \|F\|_{(2)}\) is the one appearing in Lemma \ref{lem:composition}-\ref{part_I}. 
Putting all these estimates together, we observe thanks to Lemma \ref{lem:weight}, triangle inequality and the fact that \( \gamma'\le\gamma \) implies \( 2\varepsilon-\sigma(\frac{\gamma'}{\sigma}-1)_+ \ge \varepsilon\)
		\begin{equation}
			\begin{aligned}
		\label{estim:z}
	\nn{\psi (S^y;y,y'),\psi '(S^y;y,y')}
	    &\le \nn{f^x,0}
	    + \nn{D_t,0}
	    + \nn{z,z'}
		\\&
		\le K(1+2\tilde K)|x|_{\alpha} + C_1 T^{\varkappa}\,,
			\end{aligned}
		\end{equation}
	for a constant \( C_1=C_1(\|F\|_{(2)},K,\tilde K,\rho_\gamma (\X),|x|_{\alpha})>0\) and where
\( \varkappa=\varepsilon\wedge (\gamma_0-\gamma)\wedge (1-\delta).\)
Choosing $T$ small enough shows the claimed stability.

\subsubsection{Contraction property}
We now choose another pair \( (\bar y,\bar y')\in B_T(x)\) and set \( S:=S^y\), \( \bar S:=S^{\bar y}\), \( \bar f^x:=\bar S_{\cdot,0}x\), 
\( \bar D_t= \int _{0}^t\bar S_{t,r}N_r(y_r)dr\) and \( \bar z_t= \int _0^t\bar S_{t,r}F(y_r)\cdot d\mathbf{X}_r\).
By the definition of \( \psi,\) we observe
\begin{multline}
\label{id_tilde}
\psi (S ;y ,y^{\prime})_t - \psi (\bar S ;\bar y ,\bar y ^{\prime})_t
=\psi (\bar S ;y ,y')_t- \psi (\bar S ;\bar y ,\bar y ^{\prime})_t
\\
+\Big(S_{t,0}x-\bar S_{t,0}x + \int_0^t(S _{t,r}-\bar S _{t,r})N_r(y _r)dr + \int_0^t(S _{t,r}-\bar S _{t,r})F(y _r)\cdot d\X_r\,,\, 0\Big)
\\
=\psi (\bar S ;y ,y')_t- \psi (\bar S ;\bar y ,\bar y ^{\prime})_t
+ (f^x_t-\bar f^x_t,0)
+(D_t-\bar D_t ,0)+ (z _t-\bar z_t,0)\,.
\end{multline}
Repeating the same steps as for the stability property (using Lemma \ref{lem:composition}-(II)), we obtain the following estimate for the first two terms
\begin{multline}\label{est:GHN_diff}
\nn{\psi (\bar S ;y ,y')_t- \psi (\bar S ;\bar y ,\bar y ^{\prime})_t}
\\
=\nn{\int_0^t\bar S _{t,r}[N_r(y _r)-N_r(\bar y _r)]dr + \int_0^t\bar S _{t,r}[F(y _r)-F(\bar y _r)]\cdot d\X_r	\,, \, F(y)-F(\bar y)}
\\
\le C_2T^{\varkappa}\|y-\bar y,y'-\bar y'\|_{\cD^{\gamma,\gamma'}_{X,\alpha,\sigma}(\varepsilon)}
\end{multline}
for another constant \( C_2=C_2(\|F\|_{(3)},K,\tilde K,\|N\|_{1}) >0\).\\
To estimate the drift term in \eqref{id_tilde}, we first evaluate its norm in \( \cC^0(\cB_{\alpha+\gamma-\sigma})\).
We have
\begin{multline}\label{improve_D_diff}
t^{\varepsilon}|D_t-\bar D_t|_{\alpha+\varepsilon}
= t^{\varepsilon}|\int_0^t(S-\bar S)_{t,r}N_r(y_r)|_{\alpha+\varepsilon}dr
\\
\lesssim  \|N\|_{1}\|S-\bar S\|_{(\alpha+\varepsilon,\alpha+\varepsilon-\delta )}t^{\varepsilon}\int_0^t(t-r)^{-\delta}r^{-\varepsilon}dr
\lesssim T^{1-\delta}.
\end{multline}
As for the reduced increment, we note that
\begin{align*}
\dd^S D_{t,s} -\dd^{\bar S}\bar D_{t,s}
& = \int\limits_{s}^{t} [S _{t,r} -\bar S _{t,r}] N_r( y _r)dr ,
\end{align*}
which needs to be evaluated in \( \cC^\gamma_2(B_{\alpha-\sigma})\cap\cC_2^{\gamma+\gamma'}(B_{\alpha-\sigma-\gamma'})\).
We observe for \( i=0,1 \)
\begin{align*}
|\dd^S D_{t,s} -\dd^{\bar S}\bar D_{t,s}|_{\alpha-\sigma-i\gamma'}
&\leq \int\limits_{s}^{t} |S _{t,r}-\bar S _{t,r}|_{\alpha-\delta\to\alpha-\sigma-i\gamma'} |N_r(y_r)|_{\alpha-\delta}dr
\\& \lesssim_{\|N\|_{1}} \|S -\bar S \|_{(\alpha-\delta,\alpha-\sigma-i\gamma')}\int\limits_{s}^{t} (t-r)^{\sigma+i\gamma'-\delta}~dr 
\\& 
\lesssim_{\|N\|_{1}}  \sup_{r\in [s,t]}|L_r-\bar L_r|_{1\to0}  (t-s)^{\gamma+i\gamma'}T^{1-\delta+\sigma}.
\end{align*}
We conclude thanks to \eqref{improve_D_diff} and Lemma \ref{lem:delta_S} that a similar bound holds for the plain increment \( \dd(D-\bar D)_{t,s}\).\\
As for the rough convolution, we infer from Corollary \ref{c:perturbation} and Lemma \ref{lem:composition} that
\begin{align*}
\nn{z-\bar z,z'-\bar z'}&
\lesssim _{\rho_{\gamma_0}(\X)}
\sup_{t\in [0,T]}|L^1_t-L^2_t|_{1\to0}T^{\gamma_0-\gamma}\|F(y),DF(y)\circ y'\|_{\cD^{\gamma,\theta\gamma'}_{X,\alpha-\sigma,\sigma}(\varepsilon)}\\
& \lesssim _{\rho_{\gamma_0}(\X)} \sup_{t\in [0,T]}|L^1_t-L^2_t|_{1\to0} T^{\gamma_0-\gamma}(R+1),
\end{align*}
for the same choice of \( \theta \) as before.
Now, the triangle inequality, Lemma \ref{lem:weight} and the previous bounds, imply
\begin{multline*}
\nn{\psi (S ;y ,y')_t- \psi (\bar S ;\bar y ,\bar y ^{\prime})_t}
\\
\le\nn{\psi (\bar S ;y ,y')_t- \psi (\bar S ;\bar y ,\bar y ^{\prime})_t}
	+\nn{f^x_t-\bar f^x_t,0}
	\\
	+\nn{D_t-\bar D_t ,0}
	+ \nn{z _t-\bar z_t,0}
	\\
	\le  C_3\bigg(\sup_{t\in [0,T]}|L_t-\bar L_t|_{1\to 0} + T^{\varkappa}\|y-\bar{y}, y'-\bar{y}'\|_{\cD^{\gamma,\gamma'}_{X,\alpha,\sigma}}\bigg) .
\end{multline*}
To conclude, we regard that \( y_0=\bar y_0=x\) and interpolate \( \cB_{\eta} \) between \( \cB_{\alpha-\varrho} \) and \( \cB_{\alpha} \) which gives for \( \vartheta=\frac{\alpha-\eta}{\varrho} \)
\[
|y_t-\bar y_t|_{\eta}
\le C t^{\vartheta\varrho}[\dd y-\dd \bar y]_{\varrho,\alpha-\varrho}^{\vartheta}|y-\bar y|^{1-\vartheta}_{0,\alpha},\quad t\in [0,T].
\]
Making use of Assumption \ref{ass:L_y} and \( T\le1 \) now leads to
\begin{align*}
\sup_{t\in [0,T]}|L_t-\bar L_t|_{1\to0}
&\le  2\ell C_3\sup_{t\in [0,T]}|y_t-\bar y_t|_{\eta} \\
&\le 2\ell CC_3 T^{\vartheta \varrho}\nn{y-\bar y,y'-\bar y'},
\end{align*}
hence the claimed contraction for \( 0<T\ll 1\). \\
The existence of a unique fixed point for the map \( (y,y)\mapsto \psi(S^y;y,y)\) in \( B_T(x) \) is a consequence of Picard Theorem, which finishes the proof of Theorem \ref{thm:RPDE}.

\subsubsection{Solutions starting from the domain of the generator}

Herein, we complete the proof of Theorem \ref{thm:RPDE} when \( \alpha=1 \), i.e.\ when the initial datum \( x \in D(L_t(x))=\cB_1 .\)
We now show the following result, whose proof is again based on a fixed point argument.
\begin{theorem}[The case when \( x\in \cB_1 \)]
	\label{thm:RPDE_critical}
Fix \( \alpha=1 \) and let \( \gamma_0,\gamma,\gamma',\sigma,\delta,\eta,\X,L,N,F\) be as in Theorem \ref{thm:RPDE}. Suppose moreover that 
\( L_t(\cdot) \) is continuously Fréchet differentiable on \( V \) (equipped with the \( \cB_\eta \)-topology) for each \( t\in [0,T] \) and that two constants \( \ell_0,\ell_1 \) exist such that
\begin{equation}\label{lip_constant_0_RPDE}
|\dd DL_{t,s}-\dd DL_{t,s}|_{\cB_\eta\otimes \cB_1\to\cB_0} \le \ell_0 |t-s|^\varrho\,.
\end{equation}
while
\begin{equation}
\label{lip_constant_1_RPDE}
\sup_{t\in [0,T]}|DL_t(x)-DL_t(y)|_{\cB_\eta\otimes \cB_1\to\cB_0}
\le \ell_1|x-y|_{\eta}, \quad \forall s\le t\in [0,T].
\end{equation}
Then, the same conclusions as in the previous theorem hold. Namely: given $x\in\cB_1$, there is a unique mild solution \((u,F(u))\in \cD^{\gamma,\gamma'}_{X,1 ,\gamma}([0,T];0)\) of \eqref{e:rpde2} and it depends continuously on \( (x,\X)\in \cB_1 \times \mathscr C^{\gamma_0}([0,T];\R^d)\).
\end{theorem}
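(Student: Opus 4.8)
The plan is to run the same fixed point scheme as in the proof of Theorem~\ref{thm:RPDE}, with the parameters that $\alpha=1$ forces: since the scale has no room above $\cB_1$, the origin weight $\varepsilon=(\gamma-\sigma)_+$ must be taken equal to $0$, hence $\sigma=\gamma$, and one works in the \emph{unweighted} space $\cD^{\gamma,\gamma'}_{X,1,\gamma}([0,T];0)$. The hypothesis $x\in\cB_1$ is precisely what makes this legitimate: then $t\mapsto S^y_{t,0}x$ is Lipschitz with values in $\cB_0$, no blow-up at $t=0$ occurs, and the first part of Lemma~\ref{lem:weight} applies verbatim with $\alpha=1$. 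Concretely, I would fix $R\ge K(1+2\tilde K)|x|_1+|F|_{C^0_b}$ and a small auxiliary H\"older exponent $\varrho$ (compatible with the exponent appearing in the hypotheses on $L$), and let $B_T(x)$ be the closed set of $(y,y')\in\cD^{\gamma,\gamma'}_{X,1,\gamma}([0,T];0)$ with $y_0=x$, $y_0'=F(x)$, $y_t\in V$ for all $t$, and $\nn{y,y'}:=|y|_{0,1}+[\dd y]_{\varrho,1-\varrho}+\|y,y'\|_{\cD^{\gamma,\gamma'}_{X,1,\gamma}}\le R+1$; this set is non-empty (it contains $(S^x_{\cdot,0}x,0)$ by Lemma~\ref{lem:weight}) and complete for the induced metric. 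One then shows that $(y,y')\mapsto\psi_{T;N,F}(S^y;y,y')$, with $S^y=\exp(\int L_r(y_r)dr)$, maps $B_T(x)$ into itself and is a contraction for $T$ small; the fixed point is the asserted solution, and uniqueness in the solution class follows as in Theorem~\ref{thm:RPDE} by a local-uniqueness plus patching argument on $[0,T]$.

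For invariance I would decompose $\psi_{T;N,F}(S^y;y,y')_t=(S^y_{t,0}x,0)+(D_t,0)+(z_t,z_t')$ with $D_t=\int_0^tS^y_{t,r}N_r(y_r)dr$ and $(z,z')=\bigl(\int_0^tS^y_{t,r}F(y_r)\cdot d\X_r,\,F(y_t)\bigr)$. The drift term is handled exactly as for $\alpha<1$ using \eqref{smoothing_S} and the boundedness and Lipschitz continuity of $N$, with a gain $T^{1-\delta}$; the rough convolution is controlled by Corollary~\ref{cor:improvereg} together with Lemma~\ref{lem:composition}-\ref{part_I}, with a gain $T^{\gamma_0-\gamma}$ in $|z|_{0,1}$, $[\dd z]_{\gamma,1-\gamma}$, $[R^z]_{\gamma+\gamma',1-\gamma-\gamma'}$ and the auxiliary $[\dd z]_{\varrho,1-\varrho}$. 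Because $\varepsilon=0$, the only two quantities carrying no positive power of $T$ are $|z'|_{0,1-\gamma}=\sup_t|F(y_t)|_{1-\gamma}\le|F|_{C^0_b}$, already absorbed in $R$, and $[\dd z']_{\gamma',1-\gamma-\gamma'}=[\dd F(y)]_{\gamma',1-\gamma-\gamma'}$, for which one interpolates $[\dd F(y)]_{\gamma',1-\gamma-\gamma'}\lesssim T^{\gamma-\gamma'}[\dd F(y)]_{\gamma,1-2\gamma}\lesssim T^{\gamma-\gamma'}(R+1)$, valid provided $\gamma'<\gamma$ (if the prescribed $\gamma'$ equals $\gamma$, first solve in $\cD^{\gamma,\gamma''}_{X,1,\gamma}$ with $\gamma''<\gamma$ and use the embedding $\cD^{\gamma,\gamma}_{X,1,\gamma}\hookrightarrow\cD^{\gamma,\gamma''}_{X,1,\gamma}$). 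Combined with the first part of Lemma~\ref{lem:weight} this gives $\nn{\psi_{T;N,F}(S^y;y,y')}\le K(1+2\tilde K)|x|_1+|F|_{C^0_b}+C\,T^{\varkappa}(R+1)^2$ for some $\varkappa>0$, so shrinking $T$ yields $B_T(x)$-invariance.

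The contraction estimate is where the new hypotheses on $L$ enter, and it is the step I expect to be the main obstacle. Taking $(\bar y,\bar y')\in B_T(x)$, I would write
\begin{multline*}
\psi(S^y;y,y')_t-\psi(S^{\bar y};\bar y,\bar y')_t
=\bigl(\psi(S^{\bar y};y,y')_t-\psi(S^{\bar y};\bar y,\bar y')_t\bigr)
\\
+(f^x_t-\bar f^x_t,0)+(D_t-\bar D_t,0)+(z_t-\bar z_t,0),
\end{multline*}
with $f^x=S^y_{\cdot,0}x$, $\bar f^x=S^{\bar y}_{\cdot,0}x$. The first bracket is bounded by $C\,T^{\varkappa}\nn{y-\bar y,y'-\bar y'}$ as in the invariance step, now using Lemma~\ref{lem:composition}-\ref{part_II} and the fact that $y_0=\bar y_0=x$ makes $|F(y)-F(\bar y)|_{0,1-\gamma}\lesssim T^\gamma\nn{y-\bar y,y'-\bar y'}$. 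The three remaining terms are bounded, via the second part of Lemma~\ref{lem:weight}, direct estimates on $\int_s^t(S^{\bar y}_{t,r}-S^y_{t,r})N_r(y_r)dr$ using the propagator-difference bounds of Appendix~\ref{app:perturbation}, and Corollary~\ref{c:perturbation} respectively, by a constant multiple of $\rho_{\gamma_0}(\X)\,\Gamma_1\bigl(L(y),L(\bar y)\bigr)$ times a positive power of $T$. Since $\alpha=1$, $\Gamma_1$ is the two-term quantity of \eqref{nota:Gamma}: besides $\sup_t|L_t(y_t)-L_t(\bar y_t)|_{\cB_1\to\cB_0}$ (bounded through \ref{Q1}) one must also control the $\varrho$-H\"older-in-time seminorm of $t\mapsto L_t(y_t)-L_t(\bar y_t)$ in $\cL(\cB_1,\cB_0)$. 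For this I would Taylor-expand $L_t(y_t)-L_t(\bar y_t)=\int_0^1DL_t\bigl(\bar y_t+\tau(y_t-\bar y_t)\bigr)\circ(y_t-\bar y_t)\,d\tau$ and take the time increment, producing a term with the time increment $\dd DL$ (controlled by \eqref{lip_constant_0_RPDE}), a term with the spatial increment of the base point of $DL$ (controlled by \eqref{lip_constant_1_RPDE}), and a term with $\dd(y-\bar y)$; in each one uses $y_0=\bar y_0=x$ together with an interpolation of $\cB_\eta$ between $\cB_0$ and $\cB_{1-\gamma}$ and a fractional H\"older interpolation to pull out a positive power of $T$ and a bound by $\nn{y-\bar y,y'-\bar y'}$. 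Altogether $\Gamma_1(L(y),L(\bar y))\lesssim T^{\varkappa'}\nn{y-\bar y,y'-\bar y'}$ with $\varkappa'>0$, so for $T$ small the map is a contraction and the Banach fixed point theorem gives existence and uniqueness on $[0,T]$.

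Finally, continuous dependence on $(x,\X)\in\cB_1\times\mathscr C^{\gamma_0}(0,T;\R^d)$ follows from the standard parametrized-contraction argument: all constants above are uniform for $(x,\X)$ in bounded sets, so the fixed point can be solved on a common small interval and two solutions built over $(x,\X)$ and $(\bar x,\bar\X)$ are compared using Remark~\ref{rem:other_RP} for the composition estimates across distinct rough paths and the explicit $\rho_{\gamma_0}$-dependence already recorded in Theorem~\ref{thm:integral}, Corollary~\ref{cor:improvereg} and Theorem~\ref{thm:perturbation}; one then propagates along $[0,T]$ by iteration. To repeat, the crux is that for $\alpha=1$ it no longer suffices that the uniform-in-time operator norm of $L_t(y_t)-L_t(\bar y_t)$ be small: its $\varrho$-H\"older modulus in time must be small too, which is exactly why the second-order regularity hypotheses \eqref{lip_constant_0_RPDE}--\eqref{lip_constant_1_RPDE} on $L$ are imposed and why the Taylor-expansion-plus-interpolation device (exploiting $y_0=\bar y_0$) is the delicate point.
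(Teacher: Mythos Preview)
Your proposal is correct and follows essentially the same route as the paper: run the fixed point argument of Theorem~\ref{thm:RPDE} with $\alpha=1$, $\sigma=\gamma$, $\varepsilon=0$, note that invariance is as before, and for the contraction use that $\Gamma_1(L(y),L(\bar y))$ now contains the $\varrho$-H\"older-in-time seminorm of $L_\cdot(y_\cdot)-L_\cdot(\bar y_\cdot)$, which one controls via a mean-value/Taylor expansion together with the new hypotheses \eqref{lip_constant_0_RPDE}--\eqref{lip_constant_1_RPDE} and the fact that $y_0=\bar y_0=x$. The paper's decomposition of the time increment is organized slightly differently (it splits off $\dd L_{t,s}(y_t)-\dd L_{t,s}(\bar y_t)$ first and then applies the mean-value theorem in the spatial variable at the frozen time $s$), and it passes through the observation $|L-\bar L|_{0,1\to0}\le T^\varrho[\dd L-\dd\bar L]_{\varrho,1\to0}$ to reduce entirely to the H\"older term; but this is equivalent to your expansion. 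Your treatment of the invariance step is in fact more explicit than the paper's, which simply declares it ``unchanged'': you correctly flag that with $\varepsilon=0$ the bound \eqref{est:integ_dd_prime} no longer carries a positive power of $T$ and remedy this by taking $\gamma'<\gamma$ and interpolating.
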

\begin{proof}
We employ the same notations as in Section \ref{ssec:prf_thm_RPDE}.
The stability part of the fixed point map \( \psi_{T;N,F} \) is unchanged, hence we focus on the contraction property.
This time, Corollary \ref{c:perturbation} gives the inequality
\begin{multline}
	\label{pre_contraction_1}
\|\psi (S ;y ,y')_t- \psi (\bar S ;\bar y ,\bar y ^{\prime})_t\|_{\cD^{\gamma,\gamma'}_{X,\alpha,\gamma}}
\\
	\lesssim |L-\bar L|_{0,1\to 0} + T^{\varrho}[L-\bar L]_{\varrho;1\to 0} + T^{(\gamma-\sigma)\wedge(\gamma_0-\gamma)}\|y-\bar{y}, y'-\bar{y}'\|_{\cD^{\gamma,\gamma'}_{X,\alpha,\sigma}}
\\
\lesssim T^\varrho[\dd L-\dd \bar L]_{\varrho;1\to 0} + T^{(\gamma-\sigma)\wedge(\gamma_0-\gamma)}\|y-\bar{y}, y'-\bar{y}'\|_{\cD^{\gamma,\gamma'}_{X,\alpha,\sigma}}
\end{multline}
where we regard that \( L_0(y_0)=L_0(x)=L_0(\bar y_0) .\)
On the other hand, we have from the mean-value Theorem:
\begin{multline*}
 L_t(y_t) - L_s(y_s) - L_t(\bar y_t) + L_s(\bar y_s)
\\
 =(\dd L_{t,s}(y_t)- \dd L_{t,s}(\bar y_t))
+ (\int _0^1 DL_s(\tau y_t +(1-\tau)y_s )d\tau) \circ \dd y_{t,s} 
\\
-
(\int _0^1 DL_s(\tau \bar y_t +(1-\tau)\bar y_s )d\tau) \circ \dd \bar y_{t,s}  \,.
\end{multline*}
In particular, \eqref{lip_constant_0_RPDE} and \eqref{lip_constant_1_RPDE} yield
\[
|\dd L_{t,s}- \dd \bar L_{t,s} |_{1\to 0}
\le
|t-s|^\varrho\Big(\ell_0|y-\bar y|_{0,\eta}
+\ell_1|y-\bar y|_{0,\eta} [\dd y]_{\varrho;\alpha-\varrho}
+C(1+\ell_1)|\bar y|_{0,\eta}[\dd y-\dd \bar y]_{\varrho,\alpha-\varrho})\,.
\]
Inserting this bound into \eqref{pre_contraction_1} and proceeding similarly as in Section \ref{ssec:prf_thm_RPDE} yields
\begin{multline*}
\nnn{\psi (S ;y ,y')_t- \psi (\bar S ;\bar y ,\bar y ^{\prime})_t}
\\
\lesssim _{\ell_0,\ell_1,R} T^{\varrho} (|y-\bar y|_{0,\eta} + |\dd y-\dd \bar y|_{\varrho,\alpha-\varrho})
+T^{\varkappa}\nn{y-\bar y,y'-\bar y'}
\\
\lesssim T^{\varrho\wedge\varkappa}\nnn{y-\bar y;y'-\bar y'}.
\end{multline*}
This shows the claimed contraction property for \( 0<T\ll 1\).
\end{proof}

\subsubsection{Casting aside boundedness}
\label{ssec:cast_aside_bd}

To complete the proof of Theorem~\ref{thm:main}, we rely on Theorems \ref{thm:RPDE}, \ref{thm:RPDE_critical} and a localization argument.
More precisely, we truncate the nonlinear terms $F$ and $N$ with a smooth cut-off function $\chi:\R\to [0,1]$ such that
\begin{align*}
	\begin{cases}
		\chi(s) = 1, & |s|\leq 1\\
		\chi(s) =0, & |s|\geq 2.
	\end{cases}
\end{align*}
For each \(R>0\) we introduce \( N_{\cdot ,R}(y):= N_{\cdot}(y)\chi _R(|y|_\alpha) \), \( F_R(y):= F(y)\chi_R(|y|_{\alpha}) \) where \( \chi_R(\cdot)=\chi(\cdot/R). \) One can easily show that $F_R$ and $N_R$ satisfy the growth conditions imposed in Theorem~\ref{thm:RPDE}. 
Similarly if \( R'\in (0,\mathrm{dist}(x,\partial V))\), we define the quasilinear operator  
\[
V\mapsto \cL(\cB_1,\cB),\quad 
z\mapsto L^{R'}(z):=L\Big(z + 1_{|z-x|>R'}\frac{z -x}{|z-x|_{\alpha}}(R'-|z-x|_{\alpha})\Big)
\] 
which coincides with \( L(\cdot)\) on \( \bar B(x,R')\subset V\) and is easily seen to be Lipschitz.
With this at hand, we introduce the evolution family 
\( S_{t,s}^{y,R'}=\exp(\int_s ^t L^{R'}(y_r)dr),\) for \( (s,t)\in \Delta_2.\)
Applying Picard's fixed point Theorem, we see that for \( T(R,R')>0 \) chosen small enough, there exists a unique fixed point $(u,u')\in \cD_{X,\alpha,\sigma }^{\gamma,\gamma'}$ for $\psi_{T;N_R,F_R}(S^{\cdot,R'};\cdot,\cdot)$ and it is clearly a local solution of \eqref{e:rpde3}. 
Now, letting \( \tau_{R',\eta}:=\inf\{t\in (0,T], |y_t-x|_{\eta}\ge R'\}\)
and repeating the argument with $T^1:=\tau_{R,\alpha}\wedge\tau_{R',\eta}$ and \(x^1:= u_{T^1}\) instead of the initial pair $(0,x)$, we find the existence of a sequence $(T^n,x^n)$ such that by construction $\tau :=\sup_{n\in\N}T^n< T$ implies $\limsup_{n\to\infty}|x^n|_{\alpha }=\infty$ or \( x^n\) leaves the open set \( V\in \cB_\eta\). Exploiting local uniqueness, we can then construct a maximal solution by a standard concatenation procedure, see~\cite[Lemma 5.9]{HN20}.
\hfill\qed

\subsection{Continuity of the solution map}
Let \( \bar\X \in \cC^{\bar{\gamma_0}}(\R^d)\) be another rough path. 
For each \( (y,y')\in \cD^{\gamma,\gamma'}_{X,\alpha,\sigma} \) and \( (\bar y,\bar y')\in \cD^{\gamma,\gamma'}_{\bar X,\alpha,\sigma} ,\)
we extend the previously introduced metric as follows%
\footnote{This is no longer a metric unless \( X=\bar X\).}
\begin{multline}
\label{nota:distance}
\nn{y,y';\bar y,\bar y'}:=|y-\bar y|_{0,\alpha+\varepsilon}^{(\varepsilon)}+ [y-\bar y]_{\varrho,\alpha-\varrho}
\\
+|y-\bar y|_{0,\alpha } + |y'-\bar y'|_{0,\alpha-\sigma}^{(\varepsilon)} + [\dd y'-\dd \bar y']^{(2\varepsilon)}_{\gamma',\alpha-\sigma-\gamma'} + [R^y-\bar R^{\bar y}]^{(2\varepsilon)}_{\gamma+\gamma',\alpha-\sigma-\gamma'},
\end{multline}
where it is understood that \( R_{t,s}^y=\dd y_{t,s}-y'_t\cdot\dd X_{t,s} \) while \( \bar R_{t,s}^{\bar y} =\dd \bar y_{t,s}-\bar y'_t\cdot\dd\bar X_{t,s} .\)
Given \(L,N,F \) as in Theorem \ref{thm:main}, we define two controlled rough paths via the relations
\[
dy - L_t(y)ydt = N_t(y)dt + F(y)\cdot d\X ,\quad \quad y_0=x
\]
with \( y'=F(y)\),
and
\[
d\bar y - L_t(\bar y)\bar ydt = N_t(\bar y)dt + F(\bar y)\cdot d\bar \X ,\quad \quad \bar y_0=\bar x
\]
where \( \bar y'=F(\bar y)\) and \( \bar x\in \cB_\alpha\) is another choice of starting point.

\begin{claim}
	Let \( C:=\|y,F(y)\|_{\cD^{\gamma,\gamma'}_{X,\alpha,\sigma}}\vee \|\bar y,F(\bar y)\|_{\cD^{\gamma,\gamma'}_{\bar X,\alpha,\sigma}}\).
	If \( \mathscr R_{t,s}:=\mathscr R^{S,y}_{t,s}-\mathscr R^{\bar S,\bar y}_{t,s}\), then
	we have the estimate
	\begin{equation}\label{remainder_apriori}
	s^{2\varepsilon-\iota}|\mathscr R_{t,s}|_{\alpha-\sigma + \kappa}
	\lesssim_{\rho_\gamma(\X),K,\tilde K,C}
	|t-s|^{\gamma_0+\gamma -\sigma -\kappa-\iota}[\nn{y,y';\bar y,\bar y'}+\rho_{\gamma_0}(\X,\bar\X)]
	\end{equation}
	for each \( \kappa+\iota\in [-\sigma,\gamma_0+\gamma-\sigma),\)
	\( \iota\in [0,2\varepsilon].\)
\end{claim}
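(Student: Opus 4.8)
The plan is to mimic, simultaneously, the proofs of Theorem~\ref{thm:integral} and Theorem~\ref{thm:perturbation}: realise $\mathscr R_{t,s}=\mathscr R^{S,y}_{t,s}-\mathscr R^{\bar S,\bar y}_{t,s}$ as the limit of dyadic Riemann sums and control the dyadic increments through Lemma~\ref{lem:dyadic}. First I would fix notation: write $S=S^y=\exp(\int L_r(y_r)dr)$, $\bar S=S^{\bar y}=\exp(\int L_r(\bar y_r)dr)$ and $(\zeta,\zeta'):=(F(y),DF(y)\circ F(y))$, $(\bar\zeta,\bar\zeta'):=(F(\bar y),DF(\bar y)\circ F(\bar y))$, so that $\mathscr R^{S,y}=\mathscr R^{S;\zeta}$ and $\mathscr R^{\bar S,\bar y}=\mathscr R^{\bar S;\bar\zeta}$ are the integral remainders of Theorem~\ref{thm:integral} for the integrands occurring in the mild formulation. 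By the two-driver version of the composition estimate (Remark~\ref{rem:other_RP}), $(\zeta,\zeta')$ and $(\bar\zeta,\bar\zeta')$ are controlled rough paths in the relevant spaces with norms bounded by a constant depending only on $C$ and $\|F\|_{(3)}$, and their ``mixed'' difference norm is bounded by $\nn{y,y';\bar y,\bar y'}+\rho_{\gamma_0}(\X,\bar\X)$ times the same kind of constant. Moreover, since $\eta<\alpha$ forces $\cB_\alpha\hookrightarrow\cB_\eta$ and Assumption~\ref{ass:L_y} is in force, Lemma~\ref{lem:K} gives $\|S-\bar S\|_{(\beta,\beta')}\lesssim_{K,\tilde K,\ell,T}\sup_r|y_r-\bar y_r|_\eta\le\sup_r|y_r-\bar y_r|_\alpha\le\nn{y,y';\bar y,\bar y'}$ for every exponent pair $(\beta,\beta')$ that occurs (with the $\varrho$-H\"older correction of \eqref{nota:Gamma} when $\beta'=1$). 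Thus all three ``small'' inputs --- $\|S-\bar S\|$, the integrand difference, and $\rho_{\gamma_0}(\X,\bar\X)$ --- are bounded by the bracket in \eqref{remainder_apriori}, while all the ``large'' quantities are bounded by $C$ and $\rho_{\gamma_0}(\X)$ and may be folded into the implied constant.

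Next I would set up the dyadic decomposition exactly as in Theorem~\ref{thm:integral}: for fixed $(s,t)\in\Delta_2$ and $[u,v]\subset[s,t]$ put $\xi_{v,u}=\zeta_v\cdot\dd X_{v,u}+\zeta'_v:\xx_{v,u}$, $\bar\xi_{v,u}=\bar\zeta_v\cdot\dd\bar X_{v,u}+\bar\zeta'_v:\xxx_{v,u}$, $J_{v,u}=S_{t,u}\xi_{v,u}$, $\bar J_{v,u}=\bar S_{t,u}\bar\xi_{v,u}$ and $K_{v,u}=J_{v,u}-\bar J_{v,u}$; the sewing construction then gives $\mathscr R_{t,s}=\big(\lim_n\sum_{[u,v]\in\pi_n(s,t)}K_{v,u}\big)-K_{t,s}$, so it suffices to estimate $\dd K_{v,m,u}=\dd J_{v,m,u}-\dd\bar J_{v,m,u}$ in the form \eqref{hyp:dyadic}. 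Applying Chen's relation \eqref{chen} for $\X$ and for $\bar\X$ together with the multiplicativity of $S$ and $\bar S$, one expands $\dd J_{v,m,u}$ and $\dd\bar J_{v,m,u}$ into the same four terms $\mathrm I$--$\mathrm{IV}$ as in the proof of Theorem~\ref{thm:integral}; I would then split each of the four differences into a term carrying a factor $S-\bar S$ (handled as in Theorem~\ref{thm:perturbation}), a term carrying a factor among $R^\zeta-R^{\bar\zeta}$, $\dd\zeta'-\dd\bar\zeta'$, $\zeta-\bar\zeta$, $\zeta'-\bar\zeta'$ (handled via Remark~\ref{rem:other_RP} and the interpolation bounds \eqref{apriori_R_gamma}--\eqref{apriori_dd_prime}), and a term carrying a factor among $\dd X-\dd\bar X$, $\XX-\bar\XX$ (controlled by $\rho_{\gamma_0}(\X,\bar\X)$). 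Using the smoothing and anti-smoothing estimates \eqref{smoothing_S}--\eqref{anti_smoothing_S}, every resulting summand is then bounded by
\[
A_i\,|t-u|^{-\lambda_i}|u-m|^{\nu_i}|m-v|^{\mu_i-\nu_i}m^{-\varepsilon_i},
\]
with precisely the same quadruples $(\mu_i,\lambda_i,\nu_i,\varepsilon_i)$ that already appear in the proofs of Theorem~\ref{thm:integral} and Theorem~\ref{thm:perturbation} --- hence with the hypotheses of Lemma~\ref{lem:dyadic} satisfied and $\mu_i-\lambda_i-(\varepsilon_i-(2\varepsilon-\iota))_+=\gamma_0+\gamma-\sigma-\kappa-\iota$ --- and with $A_i\lesssim_{K,\tilde K,\rho_{\gamma_0}(\X),C}\big[\nn{y,y';\bar y,\bar y'}+\rho_{\gamma_0}(\X,\bar\X)\big]$, where we also bound $\rho_{\gamma_0}(\bar\X)\le\rho_{\gamma_0}(\X)+\rho_{\gamma_0}(\X,\bar\X)$.

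Finally I would invoke Lemma~\ref{lem:dyadic} with $\omega=2\varepsilon-\iota$, which yields $s^{2\varepsilon-\iota}|\mathscr R_{t,s}|_{\alpha-\sigma+\kappa}\lesssim\sum_iA_i|t-s|^{\gamma_0+\gamma-\sigma-\kappa-\iota}$, i.e.\ \eqref{remainder_apriori}. The main obstacle is purely combinatorial: keeping track, in each of the many products produced by the triple splitting, of which factor is the ``small'' one and verifying that the remaining factors still produce admissible exponents; this is entirely parallel to the two proofs already carried out. The one point that genuinely needs care is the absorption of $\|S-\bar S\|$ into the right-hand side, which is exactly why one uses $\cB_\alpha\hookrightarrow\cB_\eta$ to bound $\sup_r|y_r-\bar y_r|_\eta$ by $|y-\bar y|_{0,\alpha}\le\nn{y,y';\bar y,\bar y'}$.
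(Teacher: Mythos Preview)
Your proposal is correct and follows essentially the same approach as the paper: realise $\mathscr R_{t,s}$ via dyadic sums, control $\dd K_{v,m,u}$ by splitting off the contributions of $S-\bar S$, of the integrand differences (via Remark~\ref{rem:other_RP}), and of $\X-\bar\X$, then apply Lemma~\ref{lem:dyadic} with the same exponent profile as in Theorems~\ref{thm:integral} and~\ref{thm:perturbation}. The only cosmetic difference is that the paper organises the expansion of $\dd K_{v,m,u}$ as a single five-term identity \eqref{decomp:continuity} (first isolating $\dd\xi-\dd\bar\xi$ and $\xi-\bar\xi$, then peeling off $S-\bar S$), whereas you take the four terms $\mathrm I$--$\mathrm{IV}$ separately and split each; both routes yield the same summands and the same estimates.
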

\begin{proof}
	Introduce \( (\zeta,\zeta')=(F(y),DF\circ F(y))\) and define \( (\bar \zeta,\bar \zeta')\) similarly. For \( u\le v\), let \( \xi_{v,u}=\zeta_v\cdot \dd X_{v,u} + \zeta'_v:\xx_{v,u} \). Define \( \bar\xi_{v,u}\) in a similar way and observe that
	\begin{equation}\label{diff_xi}
	\begin{aligned}
	(\xi - \bar \xi)_{v,u} 
	&=(\zeta_v-\bar\zeta_v)\cdot \dd X_{v,u} + (\zeta'_v-\bar\zeta'_v):\xx_{v,u}
	\\&\quad 
	+\zeta_v\cdot \dd (X-\bar X)_{v,u} + \bar\zeta_v:(\xx -\xxx)_{v,u}\,.
	\end{aligned}
	\end{equation}
	If an additional point \( m\in [u,v]\) is given, then Chen's relations also yield 
	\begin{equation}\label{diff_dd_xi}
	\begin{aligned}
	(\dd \xi-\dd\bar\xi)_{v,m,u}=
	&=(R^{\zeta}-\bar R^{\bar \zeta})_{v,m}\cdot \dd X_{m,u} + (\dd\zeta' - \dd \bar\zeta)_{v,m}:\xx_{m,u}
	\\&\quad 
	+\bar R_{v,m}\cdot \dd (X-\bar X)_{m,u} + \dd\bar\zeta_{v,m}:(\xx -\xxx)_{m,u}\,.	
	\end{aligned}	
	\end{equation}
	On the other hand, if \( s\le u\le m\le v\le t\), we see that
	\begin{equation}\label{decomp:continuity}
	\begin{aligned}
	\delta \mathscr R_{v,m,u}
	&= S_{t,u}(\dd\xi-\dd\bar\xi)_{v,m,u} 
	+ (S-\bar S)_{t,u}\dd\bar\xi_{v,m,u}
	\\&
	+S_{t,u}(I-S_{m,u})(\xi-\bar\xi)_{v,m}
	+ (S-\bar S)_{t,u}(I-S_{m,u})\bar\xi_{v,m}
	+\bar S_{t,u}(\bar S-S)_{m,u}\bar \xi_{v,m}\,.
	\end{aligned}
	\end{equation}
	Combining this identity with \eqref{diff_xi} and \eqref{diff_dd_xi}, we see thanks to similar arguments as for the fixed point that each term in \( \dd \mathscr R_{v,m,u}\) can be estimated above by a constant times an expression of the form
	\[
	A|t-u|^{-\lambda}|u-m|^{\nu}|m-v|^{\mu-\nu}m^{-\epsilon}
	\]
	where \( A\lesssim \nn{y,y';\bar y,\bar y'} + \rho_{\gamma_0}(\X,\bar \X)\).
	For instance, Remark \ref{rem:other_RP} and the smoothing properties of \( S\) yield
	\[
	\begin{aligned}
	|S_{t,u}(R^{\zeta}-\bar R^{\bar \zeta})_{v,m,u} \cdot \dd X_{m,u}|_{\alpha-\sigma+\kappa}
	&\lesssim 
	(t-u)^{-\kappa-\sigma-\gamma'}(v-m)^{\gamma+\gamma'}
	\\&\quad \quad 
	\times(m-u)^{\gamma_0}m^{-2\varepsilon}\nn{y,y';\bar y,\bar y'},
	\\
	|S_{t,u}(\dd\zeta'-\dd\bar\zeta')_{v,m} \cdot \xx_{m,u}|_{\alpha-\sigma+\kappa}
	&\lesssim 
	(t-u)^{-\kappa-\sigma-\gamma'}(v-m)^{\gamma'}
	\\&\quad \quad 
	\times(m-u)^{2\gamma_0}m^{-2\varepsilon}\nn{y,y';\bar y,\bar y'},
	\\
	|S_{t,u}\bar R^{\bar \zeta}_{v,m} \cdot \dd (X-\bar X)_{m,u}|_{\alpha-\sigma+\kappa}
	&\lesssim 
	(t-u)^{-\kappa-\sigma-\gamma'}(v-m)^{\gamma+\gamma'}
	\\&\quad \quad 
	\times(m-u)^{\gamma_0}m^{-2\varepsilon}[X-\bar X]_{\gamma_0},
	\end{aligned}
	\]
	and 
	\[
	\begin{aligned}
	&|S_{t,u}\dd\bar\zeta_{v,m} \cdot (\xx-\xxx)_{m,u}|_{\alpha-\sigma+\kappa}
	\\
	&\lesssim 
	(t-u)^{-\kappa-\sigma-\gamma'}(v-m)^{\gamma'}(m-u)^{2\gamma_0}m^{-2\varepsilon}
	([\XX-\mathbb{\bar X}]_{2\gamma_0} + [\dd (X-\bar X)]_{\gamma_0}[\dd(X+\bar X)]_{\gamma_0}),
	\end{aligned}
	\]
	which gives the corresponding estimate for the first term in \eqref{decomp:continuity}.
	Using Assumption \ref{ass:L_y} to deal with occurences of \( S-\bar S\), the other terms in \eqref{decomp:continuity} are treated by similar arguments.
	The conclusion follows by an application of Lemma \ref{lem:dyadic}.
\end{proof}
\begin{corollary}
	The solution map \[
	\mathcal S\colon \mathscr C^{\gamma_0}(\R^d)\times \cB_{\alpha}\to \cD_{X,\alpha,\sigma}^{\gamma,\gamma'}\]
	which to every pair
	\((\X,x)\) assigns the solution \( (u,F(u))=\mathcal S(\X,x)\) of \eqref{e:rpde2}, is continuous. 
	More precisely, we have the explicit bound 
	\[
	\nn{\mathcal S(\X,x)-\mathcal S(\mathbf{\bar X},\bar x)}
	\lesssim |x-\bar x|_{\alpha} + \rho_{\gamma_0}(\X,\mathbf{\bar X})\,.
	\]
	where the implied constant only depends on \( \rho_{\gamma_0}(\X),\rho_{\gamma_0}(\mathbf{\bar X}), |x|_\alpha\) and \( |\bar x|_{\alpha}\).
\end{corollary}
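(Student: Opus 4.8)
The plan is to run a perturbation of the fixed point argument of Section~\ref{ssec:prf_thm_RPDE}. Write $(u,F(u))=\mathcal S(\X,x)$ and $(\bar u,F(\bar u))=\mathcal S(\bar\X,\bar x)$, with associated evolution families $S=\exp(\int L_r(u_r)dr)$ and $\bar S=\exp(\int L_r(\bar u_r)dr)$. Subtracting the two variation of constants formulas~\eqref{e:rpde3} and mimicking the decomposition~\eqref{id_tilde}, I would split $u-\bar u$ into three contributions: the difference of linear parts $S_{t,0}x-\bar S_{t,0}\bar x$; the difference of drift integrals $\int_0^t(S_{t,r}N_r(u_r)-\bar S_{t,r}N_r(\bar u_r))dr$; and the difference of rough convolutions $\int_0^t(S_{t,r}F(u_r)\cdot d\X_r-\bar S_{t,r}F(\bar u_r)\cdot d\bar\X_r)$, each to be estimated in the metric $\nn{\cdot}$ of~\eqref{nota:distance}.

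For the linear part I would write $S_{t,0}x-\bar S_{t,0}\bar x=(S_{t,0}-\bar S_{t,0})x+\bar S_{t,0}(x-\bar x)$ and invoke Lemma~\ref{lem:weight} (parts~1 and~2) together with Assumption~\ref{ass:L_y}, which bounds $\Gamma_\alpha(L_\cdot(u_\cdot),L_\cdot(\bar u_\cdot))$ by $\sup_{t}|u_t-\bar u_t|_\eta$. For the drift I would split into $\int(S-\bar S)N(u)$ and $\int\bar S(N(u)-N(\bar u))$, the first controlled through the propagator-difference estimates of Appendix~\ref{app:perturbation} as in~\eqref{improve_D_diff}, the second through the Lipschitz property of $N$; both gain a positive power of $T$. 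For the rough convolution I would peel off the \emph{change-of-propagator} term $\int(S-\bar S)F(u)\cdot d\X$, handled directly by Corollary~\ref{c:perturbation}, from the \emph{change-of-integrand-and-base-path} term $\int\bar S(F(u)-F(\bar u))\cdot d\X-\int\bar S F(\bar u)\cdot d(\X-\bar\X)$, which is exactly what the a~priori estimate~\eqref{remainder_apriori} is built to handle; combined with the composition bounds of Lemma~\ref{lem:composition} and Remark~\ref{rem:other_RP} (to view $F(u)-F(\bar u)$ as a controlled path over $X$ versus $\bar X$) and with Corollary~\ref{cor:improvereg}, this yields a factor $T^{\varkappa'}$, $\varkappa'>0$, times $\nn{u,F(u);\bar u,F(\bar u)}+\rho_{\gamma_0}(\X,\bar\X)$.

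Collecting everything, on any subinterval of length $T\le1$ I expect an inequality of the schematic form
\[
\nn{u,F(u);\bar u,F(\bar u)}\lesssim |x-\bar x|_\alpha+\rho_{\gamma_0}(\X,\bar\X)+\sup_{t}|u_t-\bar u_t|_\eta+T^{\varkappa}\nn{u,F(u);\bar u,F(\bar u)}
\]
with $\varkappa>0$. To close it I would interpolate $\cB_\eta$ between $\cB_{\alpha-\varrho}$ and $\cB_\alpha$ as at the end of Section~\ref{ssec:prf_thm_RPDE}, using $u_0-\bar u_0=x-\bar x$, to obtain $\sup_t|u_t-\bar u_t|_\eta\lesssim|x-\bar x|_\alpha+T^{\vartheta\varrho}\nn{u,F(u);\bar u,F(\bar u)}$ with $\vartheta=\tfrac{\alpha-\eta}{\varrho}$; then for $T=T_0$ small (depending only on the uniform a~priori bounds $\rho_{\gamma_0}(\X),\rho_{\gamma_0}(\bar\X),|x|_\alpha,|\bar x|_\alpha$) the self-referential terms are absorbed, giving the bound on $[0,T_0]$. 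Since those a~priori bounds stay uniform along the trajectories, a concatenation over $\lceil T/T_0\rceil$ consecutive intervals (restarting from the previous endpoint and using the triangle inequality for $\nn{\cdot}$) propagates the estimate to $[0,T]$. The main obstacle I anticipate is the bookkeeping in the rough convolution difference: tracking the origin weights $s^{2\varepsilon-\iota}$ consistently through all four terms of~\eqref{decomp:continuity}, and verifying that the threshold $T_0$ depends only on the a~priori data so that the concatenation is genuinely uniform; the analytic substance is already encapsulated in~\eqref{remainder_apriori}, Corollary~\ref{c:perturbation} and Lemma~\ref{lem:composition}.
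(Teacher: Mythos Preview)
Your proposal is correct and follows essentially the same route as the paper's proof. The only organizational difference is that the paper bundles the change-of-propagator and change-of-base-path contributions to the rough convolution into a single estimate (the Claim yielding~\eqref{remainder_apriori}, whose decomposition~\eqref{decomp:continuity} already contains the $(S-\bar S)$ terms), whereas you peel off the change-of-propagator piece and treat it separately via Corollary~\ref{c:perturbation}; both arrive at the same inequality. Your handling of the $\sup_t|u_t-\bar u_t|_\eta$ term via interpolation, accounting explicitly for $u_0-\bar u_0=x-\bar x$, is in fact slightly more explicit than the paper, which defers to ``similar arguments as in the proof of the contraction property'' for that step.
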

\begin{proof}
	We assume for notational simplicity that \( N=0\). First, observe that
	\begin{multline*}
	y_t-\bar y_t
	= (S-\bar S)_{t,0}x + \bar S(x-\bar x)
	+ (S-\bar S)_{t,0}(\bar\zeta_t \cdot X_t + \bar\zeta'_t:\xx_{t,0}) 
	\\
	+ \bar S_{t,0}((\zeta-\bar\zeta)_t \cdot X_t + (\zeta'-\bar\zeta')_t:\xx_{t,0}) 
	\\
	+ \bar S_{t,0}(\bar\zeta_t \cdot (X-\bar X)_t + \bar\zeta'_t:(\xx-\xxx)_{t,0}) 
	+\mathscr R_{t,0}\,.
	\end{multline*}
	From the previous claim and Assumption \ref{ass:L_y}, it follows that 
	\begin{multline*}
	t^\varepsilon|y_t-\bar y_t|_{\alpha+\varepsilon}
	\lesssim
	|y-\bar y|_{0,\eta}[|x|_\alpha + C\rho_{\gamma_0}(\X)t^{\gamma_0-\gamma}]
	\\
	+|x-\bar x|_\alpha + (t^{\gamma_0-\sigma} + t^{2(\gamma_0-\gamma)})[\nn{y,y';\bar y,\bar y'}+C\rho_{\gamma_0}(\X,\bar\X)]
	\end{multline*}
	Proceeding with similar arguments as in the proof of the contraction property for the previous fixed point theorem, each term in \eqref{nota:distance} is estimated in the same way, and one ends up with the relation
	\[
	\nn{y,y';\bar y,\bar y'}
	\lesssim_{K,\tilde K,C} \rho_{\gamma_0}(\X,\bar\X) + |x-\bar x|_{\alpha}
	+
	o(t)\nn{y,y';\bar y,\bar y'}
	\]
	where \( o(t)\to 0\) if \( t\to0.\)
	Regarding this, we conclude that if \( T \) is chosen small enough (depending only on the implicit quantities above), then
	\( \nn{y,y';\bar y,\bar y'} \lesssim
	\rho_\gamma(\X,\bar\X) + |x-\bar x|_{\alpha}
	\)
	which proves the desired continuity, locally in time. 
	To obtain continuity up to the common maximal existence time, we can repeat the argument starting with \( u_{T},\bar u_{T} \) instead of \( x,\bar x \), and so on. The proof is then completed by an obvious induction.
\end{proof}

\section{Quasilinear parabolic systems}
\label{sec:quasilinear_systems}

Throughout this section we let $m,n,n_1\in\mathbf N$, \( k\in \mathbf N_0\) with \( k\le 2m-1\) and we fix a domain $\dom\subset \R^n.$
We denote by $G$ an open set of $\R^{n_1}\times \R^{n_1\times n}\dots \R^{n_1\times n^k}\simeq \R^{{\mu}(k)}$ where, given an integer \( j \), we adopt the notation \[
{\mu}(j):={n_1}\sum_{|\beta|\leq j}1.
\]
We consider a family of linear differential operators of order $2m:$
\begin{equation}\label{generic_A}
\mathscr L(t,x,y)u:=(-1)^{m-1}\sum_{|\beta |\leq 2m} a_\beta (t,x,y )D^\beta u
\end{equation}
for each \( t\in [0,T],\) \( x\in \dom\) and \( y\in G\) with
\begin{equation}
	\label{coef_regularity}
a_{\beta} \in C^{\varrho ,0,1-}([0,T]\times \dom\times G,\cL(\R^{n_1}))\,
\end{equation}
and we suppose that $\mathscr L$ is strongly parabolic. \\
Denoting by $a(t,x,y ;\xi ):=\sum_{|\beta |=2m}a_\beta (t,x,y )\xi ^{\beta } \in \cL(\R^{n_1})$, this means that
\begin{equation}\label{coercive}
\mathrm{Re}\left\langle a(t,x,y;\xi )\zeta \,,\, \zeta \right\rangle>0
\end{equation}
 for any 
\( (t,x,y,\xi,\zeta)\in[0,T]\times\mathcal{\bar O}\times G\times(\R^n\setminus 0)\times (\C^{n_1}\setminus 0). \)\\
We now fix \( p\in [1,\infty] \) and let \( \cX=L^p(\R^n,\R^{n_1})\) for some \( n,{n_1}\in \mathbf N\), while \( L\) is as in \eqref{generic_A}--\eqref{coercive}.
In the special case when \( m=2 \) and \( \cX_1=W^{2,p}(\R^n,\R^{n_1}) \), then the Sobolev scale \(\cB_\alpha:= W^{2\alpha,p}(\R^n;\R^{n_1})\) satisfies Assumption \ref{ass:intermediate} (see Example \ref{exa:scales}). In general if \( m> 2\), we can let \( \cB_\alpha := (\cX,\cX_1)_{\alpha,p}\), which coincides with \( W^{2\alpha m,p}(\R^n,\R^{n_1})\) if and only if \( 2m\alpha\notin \mathbb N.\)

\subsection{The Dirichlet case}

In the setting of \eqref{generic_A}--\eqref{coercive} with \(k_0:=k\in\{0,\dots, 2m-1\}\),
let us fix two additional integers \( 0\le k_1\le 2m-1 \) and \( 0\le k_2 \le \lfloor 2\gamma_0 m \rfloor \)
and consider Nemyitskii non-linearities \( g(x;\cdot)\in C^1(\R^{\mu(k_1)};\R^{n_1}) \) and \( f(x;\cdot )\in C^3(\R^{\mu(k_2)}; \R^{n_1}\otimes\R^d) \) for each \( x\in \dom \). 
We assume that the dependency on \( x\) is regular enough in the following sense: for every smooth and compactly supported \( \varphi=\varphi(y)\) on \( G\), it holds \( |\varphi g|_{W^{2\alpha,p}_{\mathscr D}(\dom;C_b^1)} , |\varphi f|_{W^{2\alpha,p}_{\mathscr D}(\dom;C_b^3)}<\infty.\)\\
We specialize our main well-posedness result (Theorem~\ref{thm:RPDE}) to the case of a quasilinear parabolic evolution system driven by a rough path \( \X=(X,\XX)\in \mathscr C^{\gamma_0}(\R^d) \) with \( \gamma_0\in (\frac13,\frac12) \), of the form
\begin{equation}
	\label{quasilinear_system}
	\left \{\begin{aligned}
		&du - \mathscr L(t,x,u,\dots,D^{k_0}u)udt 
		\\
		&\quad \quad \quad 
		= g(t,x,u,\dots ,D^{k_1}u)dt
		+f(x,u,\dots ,D^{k_2}u)\cdot d\X_t, 
		\quad \text{ in }(0,T]\times\dom
		\\
		&\mathscr Du=0\quad \text{ on }(0,T]\times \partial \dom,
		\\
		&u_0(\cdot)=u^0\quad \text{ on }\dom,
	\end{aligned}\right .
\end{equation}
whose unknown \( u =(u^j)_{j=1,\dots ,n_1}\) is a continuous path from \( [0,T]\to \cB_\alpha=W^{2m\alpha,p}_{\mathscr D}(\dom;\R^{n_1}) ,\) where $\alpha\in(0,1)$ is suitably chosen.
Herein $\dom$ is a bounded smooth domain in $\R^n,$ and $\mathscr D$ denotes the Dirichlet boundary operator
\( \mathscr Du:= \big(u\big|_{\partial\dom},\dots , \frac{\partial^{m-1}u}{\partial\nu^{m-1}}\big|_{\partial\dom}\big) \),
where $\nu$ is the outward unit vector at $\partial\dom$. \\
Firstly we introduce the notation
\begin{equation}
	\label{nota_V}
	V^{\eta,p}(\cO):=\{u\in W^{2m\eta}_{\mathscr D}: u(\dom),\dots ,D^{k_0}u(\dom)\subset G\}\,.
\end{equation}
Because of the Sobolev embedding theorem, it is plain to check that  \( V^{\eta,p} \) is open in \(  \cB_{\eta} \) if \( \eta>\frac{1}{2m}(k_0+n/p) \).
For \( \beta \) large enough and $u\in\cB_\beta$ we define the Nemyitskii operators
\[
N_t(u):= g(t,\cdot,u(\cdot),\dots ,D^{k_1}u(\cdot)),
\quad 
F(u):= f(\cdot,u(\cdot),\dots , D^{k_2}u(\cdot)).
\]
Similarly if \( v\in V^{\beta,p}(\cO) \) and \( u\in \mathcal B_1 \), we define 
\[
L_t(v)u:=\mathscr L(t,\cdot,v(\cdot),\dots ,D^{k_0}v(\cdot))u\,.
\]
With this at hand, our main result in this section states as follows.

\begin{theorem}
	\label{thm:system}
	Let \( n< p < \infty \), fix a real number $\alpha\notin \frac{1}{2m}(\frac1p+\mathbf N_0)$ such that
	\[ \alpha_0:=\frac{1}{2m}\left (\max_{0\le i\le2}(k_i)+\frac{n}{p}\right ) \vee (1-\gamma_0-\gamma+2\sigma) < \alpha <1\]
	and finally let \( \sigma\in (\frac{k_2}{2m},\gamma_0) \). 
	For each \( u^0\in V^{\alpha,p}(\cO) \) there is a \( \tau\in(0,T] \), such that the problem \eqref{quasilinear_system} has a maximal solution
	\[
	u\in C([0,\tau);W^{2m\alpha,p}_{\mathscr D}(\cO))\cap C^{\gamma,\varepsilon}([0,\tau); W^{2m(\alpha-\sigma),p}_{\mathscr D}(\cO)).
	\]
	This solution is unique amongst controlled paths \( (u,u') \) such that \( u'_t=F(u_t) \) for all \( t\in [0,\tau) \) and
 \( (u,u')\in \cD^{\gamma,\gamma}_{X,\alpha,\sigma}([0,\tau);\gamma) \).
\end{theorem}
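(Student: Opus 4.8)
The plan is to deduce Theorem~\ref{thm:system} from the abstract well-posedness result Theorem~\ref{thm:RPDE} (and, in the borderline situation, Theorem~\ref{thm:RPDE_critical}) by verifying, one by one, that the concrete data $L,N,F$ associated with~\eqref{quasilinear_system} meet all the structural hypotheses of the abstract theory. The first step is to fix the functional-analytic setting: take $\cX=L^p(\cO;\R^{n_1})$ and $\cX_1=W^{2m,p}_{\mathscr D}(\cO;\R^{n_1})$ (the domain of the $L^p$-realization of a strongly elliptic operator of order $2m$ under Dirichlet boundary conditions), and let $\cB_\beta:=(\cX,\cX_1)_{\beta,p}$. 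Since $2m\alpha\notin \tfrac1p+\mathbf N_0$ by hypothesis, $\cB_\alpha=W^{2m\alpha,p}_{\mathscr D}(\cO;\R^{n_1})$ and Assumption~\ref{ass:intermediate} holds by Example~\ref{exa:scales}. The choice $\eta$ will be any number in $\bigl(\tfrac1{2m}(k_0+n/p),\alpha\bigr)$, which is nonempty because $\alpha>\alpha_0$; with this choice $V:=V^{\eta,p}(\cO)$ is open in $\cB_\eta$ by the Sobolev embedding $W^{2m\eta,p}\hookrightarrow C^{k_0}$ recalled after~\eqref{nota_V}.

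Next I would check Assumption~\ref{ass:L_y} for $L_t(v)u=\mathscr L(t,\cdot,v,\dots,D^{k_0}v)u$. Property~\ref{Q1}: since $a_\beta\in C^{\varrho,0,1-}$ and $v\mapsto (v,\dots,D^{k_0}v)$ is bounded from $\cB_\eta$ into $C^{k_0}(\bar\cO;\R^{\mu(k_0)})$ for $\eta$ as above, the map $(t,v)\mapsto L_t(v)$ is $C^{\varrho,1-}$ from $[0,T]\times V$ into $\cL(\cB_1,\cB_0)$ — multiplication of a $W^{2m,p}$-function by an $L^\infty$-coefficient is bounded on $L^p$, and the Lipschitz dependence in $v$ follows from the $C^{1-}$-dependence of $a_\beta$ on $y$ composed with the Lipschitz map $v\mapsto D^{\le k_0}v$. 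Property~\ref{Q2} (sectoriality with uniform resolvent bounds on neighbourhoods) is exactly the classical generation result for strongly parabolic systems with Dirichlet boundary conditions and continuous leading coefficients; since~\eqref{coercive} is a pointwise coercivity condition and $x\mapsto a(t,x,v(x);\xi)$ is continuous on the compact $\bar\cO$ for fixed $v\in V$, the operator $L_t(v)$ has the required sectorial bounds, locally uniformly in $(t,v)$. Then I would verify the hypotheses on $F$: with $\sigma\in(\tfrac{k_2}{2m},\gamma_0)$ one has $2m\sigma>k_2$, so for $\beta$ in a suitable interval $[\beta_0,\alpha]$ with $\beta_0>\tfrac1{2m}(k_2+n/p)$ — which is possible since $\alpha_0\ge \tfrac1{2m}(k_2+n/p)$ and the admissible window $\beta_0\in(\sigma,\alpha-\sigma-1+\gamma+\gamma_0)$ is nonempty by $\alpha>\alpha_0$ — the Nemytskii map $F(u)=f(\cdot,u,\dots,D^{k_2}u)$ is bounded and three times continuously differentiable from $\cB_\beta=W^{2m\beta,p}_{\mathscr D}$ into $\cB_{\beta-\sigma}=W^{2m(\beta-\sigma),p}_{\mathscr D}$; this rests on the standard fact that composition with an $f\in W^{2m\alpha,p}(\cO;C^3_b)$ (in the $x$-variable, as assumed) maps $W^{s,p}\to W^{s,p}$ boundedly with bounded derivatives for $s>n/p$, combined with $2m\beta-k_2>n/p$ and $2m(\beta-\sigma)\le 2m\beta-k_2$. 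An analogous, easier check gives that $N_t(u)=g(t,\cdot,u,\dots,D^{k_1}u)$ is bounded and Lipschitz from $\cB_\beta\to\cB_{\beta-\delta}$ for $\delta\in[0,\alpha)$ chosen so that $2m\delta\ge k_1$ and $2m\beta-k_1>n/p$, uniformly in $t$.

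Having verified all hypotheses, the conclusion is immediate: Theorem~\ref{thm:RPDE} (when $\alpha<1$) yields for each $u^0\in V^{\alpha,p}(\cO)$ a local-in-time controlled-path solution $(u,F(u))\in\cD^{\gamma,\gamma}_{X,\alpha,\sigma}([0,\tau);\gamma)$ to the mild formulation~\eqref{e:rpde3} with $S^u=\exp\int L_r(u_r)\,dr$, which is precisely a solution of~\eqref{quasilinear_system} in the sense of Definition~\ref{def:var_sol}; the maximality and the blow-up/escape-from-$V$ alternative, together with uniqueness among controlled paths with Gubinelli derivative $F(u)$, come from the concatenation/localization argument of Subsection~\ref{ssec:cast_aside_bd}. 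The regularity statement $u\in C([0,\tau);W^{2m\alpha,p}_{\mathscr D})\cap C^{\gamma,\varepsilon}([0,\tau);W^{2m(\alpha-\sigma),p}_{\mathscr D})$ with $\varepsilon=\gamma-\sigma$ is part of the output of Theorem~\ref{thm:RPDE}.

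The main obstacle, and the part requiring genuine care rather than bookkeeping, is the verification that the concrete operators satisfy the abstract hypotheses with the \emph{right} quantitative indices — in particular, pinning down the interval of admissible $\beta$ for which $F$ is simultaneously well-defined, bounded, and $C^3$ from $\cB_\beta$ to $\cB_{\beta-\sigma}$. This is where the arithmetic conditions $\sigma>\tfrac{k_2}{2m}$, $\alpha>\alpha_0$ and the constraint $\beta_0\in(\sigma,\alpha-\sigma-1+\gamma+\gamma_0)$ must be shown to be mutually compatible, and where one invokes sharp Sobolev multiplication and Nemytskii-composition estimates (e.g.\ from \cite{amann95quasilinear}) to control the loss of $k_2$ derivatives by the spatial-loss parameter $\sigma$. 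A secondary technical point is confirming that the Dirichlet boundary conditions are preserved under these Nemytskii operations (so that one genuinely lands in the $W^{s,p}_{\mathscr D}$-scale and not merely in $W^{s,p}$), which is automatic for low-order compositions but should be recorded. Everything else is a routine translation of the abstract framework.
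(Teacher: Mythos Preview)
Your proposal is correct and follows essentially the same route as the paper: set up the Sobolev scale $\cB_\beta=W^{2m\beta,p}_{\mathscr D}$, verify Assumption~\ref{ass:L_y} via the coefficient regularity~\eqref{coef_regularity} and Sobolev embeddings (with $\eta$ chosen so that $2m\eta>k_0+n/p$), invoke the classical generation result for strongly parabolic systems for~\ref{Q2}, check that $N$ and $F$ are Nemytskii operators with the right mapping properties (the paper cites \cite[Lemma~10.1]{amann1986quasilinear} for $N$), and then apply Theorem~\ref{thm:RPDE}. Your additional remarks on index compatibility and boundary-condition preservation are more explicit than the paper's treatment but entirely in the same spirit; note that since the statement has $\alpha<1$ strictly, Theorem~\ref{thm:RPDE_critical} is not needed here.
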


\begin{proof}
	Let us first check that Assumption \ref{ass:L_y} is satisfied on \( L \) for any $\eta\in (0,1)$ such that \( 2m\eta\in(k+\frac np ,2m\alpha). \)
	The property \ref{Q1} is a consequence of the regularity \eqref{coef_regularity} on the coefficients and Sobolev embeddings: observe indeed that for any \( u\in \cB_1 \) and each \( v,y\in W^{\beta,p} \)
	\[
	\begin{aligned}
		|L(v)u-L(w)u|_{0} \le \sum_{|\beta |\leq 2m} |a_\beta (t,x, v,\dots,D^kv )-a_\beta (t,x, w,\dots,D^kw )|_{L^\infty}|D^\beta u|_{L^p}
		\\
		\lesssim _{a}|v-w|_{W^{k,\infty}}|u|_1,
	\end{aligned}
	\]
	which is bounded by a constant times \( |v-w|_{\eta}\) provided \( 2m\eta > k+ \frac np \).
	As for \ref{Q2}, it is a well-known consequence of \eqref{coercive} (see, e.g., \cite{amann1985global}), the continuous embedding \( W^{2m\eta,p}(\cO)\hookrightarrow C^k(\cO) \) and the definition of \( V^{s,p}(\cO) \) which ensures that $(u(\cdot),\dots ,Du(\cdot))$ is \( G \)-valued.\\
	Furthermore, for the right hand side of \eqref{quasilinear_system}, we know due to~\cite[Lemma 10.1]{amann1986quasilinear} that \( N \) belongs to \(C^{1-}(V^{\alpha,p}(\cO), W^{2m(\alpha-\delta),p}(\cO))\) for every \(\delta> \frac{1}{2m} (k_1+\frac np)\).
	Similarly, fixing \( \sigma\in (\frac{k_2}{2m},\gamma_0) \) and
	\( \beta_0> \frac{1}{2m}(\frac np +k_2) \),
	we see that for any \( \beta\in [\beta_0,\alpha] \) the map \( F \colon \cB_{\beta}\to\cB_{\beta-\sigma} \) is well-defined and three-times continuously Fr\'echet-differentiable.\\
	Consequently, Theorem~\ref{thm:RPDE} entails the existence of a unique maximal mild solution in $\cD^{\gamma,\gamma}_{X,\alpha,\sigma}([0,\tau);(\gamma-\sigma)_+)$ as described above. 
\end{proof}

\subsection{Other types of boundary conditions for second order differential operators and polynomial nonlinearities}\label{neumann}
Herein, we restrict to \( m=1=k_0=k_1\), \(k_2=0\). We assume for concreteness that the nonlinear terms are polynomials of the form 
\begin{equation}\label{polynomial_g}
	 g(u,Du)(x)= \sum_{i,j,k}g_{i,k}(x)u^i(x)^{\mu_{ijk}}D_ku^j(x)^{\nu_{ijk}}
\end{equation}
and similarly 
\begin{equation}\label{polynomial_f}
f(u)(x)=\sum_{i}f_i(x)u^i(x)^{q_i},
\end{equation}
 for coefficients \( f_i,g_{i,k}\) belonging to a well-chosen functional space, and given numbers \( \mu_{ijk},\nu_{ijk},q_i\in \{0,1,\dots \}\).

Our purpose now is to discuss more general types of boundary conditions in this setting.
We consider the ansatz 
\begin{equation}
	\label{system_B}
	\left \{\begin{aligned}
		&du -\mathscr L(u,Du) u dt = g(u,D u)+ f(u)\cdot d\X\,\quad 
		\text{ on }[0,T]\times\dom\,,
		\\
		&\mathscr Bu_t=0\text{ for each }t\in [0,T],
		\\
		& u_0\text{ given in } W^{2\alpha,p}_{\mathscr B}(\dom;\R^{n_1}),
	\end{aligned}\right .
\end{equation}
for some boundary operator \( \mathscr B \), and suppose that we are either in one of the following cases
\begin{itemize}
	\item (periodic) \( \dom=(-1,1)^n \) in which case we introduce \( \mathscr Bu(x)= u(x)-u(Rx) \) where \(Rx\) is the vector obtained from \( x \) by replacing each coordinate of length \( 1 \) with their opposite value;
	\item (Neumann, resp.\ Dirichlet homogeneous) \( \dom \) is bounded with smooth boundary and \( \mathscr Bu(x)= \nu(x)\cdot \nabla u(x)\), resp.\ \( \mathscr Bu(x)=u(x)\), \( x\in \partial \dom \);
	\item (whole space) \( \dom =\R^n \) and \( \mathscr B=0. \)
\end{itemize}
With this at hand we introduce the spaces \( W^{\beta,p}_{\mathscr B}= W^{\beta,p}(\dom;\R^{n_1})\cap \mathrm{Ker}\mathscr B \) for each \( \beta\in [0,2] \) and \( p\in [1,\infty] .\)
The main result of this paragraph, whose proof is left to the reader, states as follows.
\begin{theorem}\label{thm:system_B}
	Let \( n<p<\infty \) such that \( \left (\frac12+\frac{n}{2p}\right )\vee (1-2\gamma_0)< \alpha \le1 \) and fix \( x\in V^{\alpha,p}(\cO).\) If \( \alpha=1\), suppose in addition that
	\begin{multline}
		\label{coef_regularity_critical}
		\dd a \in \cC_2^{\varrho }\Big(0,T;,C^{0,1-}(\dom\times G,\cL(\R^{n_1}))\Big)
		\\ \text{and that}\quad 
		D_y a \in \cC^{0}\Big(0,T;C^{0,1-}(\dom\times G,\cL(\R^{n_1}))\Big).
	\end{multline}
Let the non-linearities \( f(u),g(u,Du)\) be polynomials in the unknown, with coefficients \( f_i(x),g_{ijk}(x)\) belonging to \( \cB_\alpha\). 
Then there exist \( \tau\in(0,T) \) and a unique solution
	\[
	u\in C^0([0,\tau);\cB_{\alpha})\cap C^{\gamma,(\gamma-\sigma)_+}([0,\tau);\cB_{\alpha-\sigma})
	\]
	of the system \eqref{system_B} such that
	\begin{itemize}
		\item \( (u,f(u))\in \cD^{\gamma,\gamma}_{X,\alpha,0}([0,\tau);\gamma) \) if \( \alpha<1-\gamma\)
		\item \((u,f(u))\in \cD^{\gamma,\gamma}_{X,\alpha,\sigma}([0,\tau);\gamma-\sigma) \) if \( 1-\gamma \le \alpha \le 1-\gamma+\sigma\) for some \( \sigma\in(0,\gamma]\).
	\end{itemize}
\end{theorem}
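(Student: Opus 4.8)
The plan is to derive Theorem~\ref{thm:system_B} as a corollary of Theorem~\ref{thm:RPDE} (when $\alpha<1$) and of Theorem~\ref{thm:RPDE_critical} (when $\alpha=1$), following the same scheme as the proof of Theorem~\ref{thm:system}; the only points that genuinely require attention are that the boundary-adapted Sobolev scale fits the abstract framework and that the polynomial nonlinearities have the required mapping properties. First I would set $\cX=L^p(\dom;\R^{n_1})$ and $\cX_1=W^{2,p}_{\mathscr B}(\dom;\R^{n_1})$, and recall that in each of the four listed configurations (periodic cube, Neumann or Dirichlet on a smooth bounded domain, whole space) the $\cX$-realization of a strongly elliptic second order operator subject to $\mathscr B$ generates an analytic semigroup with constant domain $\cX_1$; after a shift we may assume $0\in\rho(L)$, so $\cB_0=\cX$ and $\cB_1=\cX_1$. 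The classical interpolation identity $(\cX,\cX_1)_{\beta,p}=W^{2\beta,p}_{\mathscr B}(\dom;\R^{n_1})$ for $2\beta\notin\frac1p+\mathbf N_0$ (see \cite{lunardi,amann1986quasilinear}) then yields Assumption~\ref{ass:intermediate} through Example~\ref{exa:scales}, with the convention that the boundary condition encoded in $\mathscr B$ is only effective in $\cB_\beta$ when $2\beta$ exceeds the order of $\mathscr B$ plus $1/p$ and disappears otherwise.

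Next I would verify Assumption~\ref{ass:L_y} for $L_t(v)u:=\mathscr L(t,\cdot,v,Dv)u$ with $V=V^{\eta,p}(\dom)$ for some $\eta$ satisfying $\frac12+\frac{n}{2p}<\eta<\alpha$. Property~\ref{Q1} follows from the regularity \eqref{coef_regularity} of the coefficients together with the Sobolev embedding $W^{2\eta,p}\hookrightarrow C^1$ (valid since $2\eta>1+\frac np$), exactly as in the proof of Theorem~\ref{thm:system}; property~\ref{Q2} is the standard resolvent estimate for strongly elliptic operators with the chosen boundary conditions, which holds locally uniformly in the frozen argument $v$ because then $(v,Dv)$ ranges in a fixed compact subset of $G$. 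In the critical case $\alpha=1$, the two additional hypotheses of Theorem~\ref{thm:RPDE_critical} --- H\"older continuity in $t$ of $\dd DL$ and uniform Lipschitz continuity of $DL_t$ as a map into $\cL(\cB_\eta\otimes\cB_1,\cB_0)$ --- are obtained by differentiating $v\mapsto\mathscr L(t,\cdot,v,Dv)$ in the direction of $W^{2\eta,p}\hookrightarrow C^1$ and invoking \eqref{coef_regularity_critical}.

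For the nonlinearities I would exploit that $\cB_\beta=W^{2\beta,p}$ is a multiplication algebra as soon as $2\beta>\frac np$, and that pointwise multiplication is bounded from $W^{2\beta,p}\times W^{2\beta,p}$ into $W^{2(\beta-\sigma),p}$, and from $W^{2\beta,p}\times W^{2\beta-1,p}$ into $W^{2(\beta-\delta),p}$ for suitable $\delta$. Since $f(u)=\sum_i f_i\,u^{q_i}$ with $f_i\in\cB_\alpha$, the Nemytskii map $F$ is a polynomial in $u$, hence $C^\infty$; in the range $\alpha<1-\gamma$ one takes $\sigma=0$ and obtains $F\colon\cB_\beta\to\cB_\beta$, while for $1-\gamma\le\alpha\le 1-\gamma+\sigma$ one picks $\sigma\in(0,\gamma]$ and the multiplication estimate gives $F\colon\cB_\beta\to\cB_{\beta-\sigma}$, bounded on bounded sets and three times Fr\'echet differentiable, for all $\beta$ in a neighbourhood of $\alpha$ bounded below by some admissible $\beta_0>\sigma$. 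Likewise $g(u,Du)$ is polynomial of degree one in $Du$, so $N_t\colon\cB_\beta\to\cB_{\beta-\delta}$ is Lipschitz on bounded sets for $\delta$ slightly larger than $\frac12+\frac{n}{2p}$, and since $\alpha>\frac12+\frac{n}{2p}$ we may fix such a $\delta<\alpha$. Truncating $F$ and $N$ with the cut-off $\chi_R$ as in Section~\ref{ssec:cast_aside_bd} then supplies the global boundedness required by Theorem~\ref{thm:RPDE}.

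With these verifications in place, Theorem~\ref{thm:RPDE} (respectively Theorem~\ref{thm:RPDE_critical} when $\alpha=1$, choosing $\gamma<\gamma_0$ close enough to $\gamma_0$ to meet the admissibility window) yields the unique maximal controlled-path solution $(u,F(u))\in\cD^{\gamma,\gamma}_{X,\alpha,\sigma}$ with the stated regularity and continuous dependence on the data, which is the assertion. The main obstacle --- and the only delicate point --- is the interpolation bookkeeping for the boundary conditions: one must confirm that $W^{2\beta,p}_{\mathscr B}$ really is the interpolation space $(\cX,\cX_1)_{\beta,p}$ in each of the four geometric settings (this is classical for the torus and for Neumann or Dirichlet on smooth domains, and trivial for $\dom=\R^n$), and that Assumption~\ref{ass:intermediate} holds, which here only needs the two-sided embedding \eqref{intermediate_spaces} and hence follows from the real-interpolation description without appealing to bounded imaginary powers. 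Once the scale is fixed, the polynomial structure renders all the Nemytskii and differentiability estimates routine.
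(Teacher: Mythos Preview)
Your proposal is correct and is precisely the approach the paper intends: the authors explicitly leave the proof of Theorem~\ref{thm:system_B} to the reader, and your argument follows the template of the proof of Theorem~\ref{thm:system} (the Dirichlet case), adapting it to the other boundary operators and to polynomial nonlinearities via the multiplication-algebra property of $W^{2\beta,p}$ for $2\beta>n/p$. One minor inaccuracy: the drift $g(u,Du)$ in \eqref{polynomial_g} is allowed to contain arbitrary powers $\nu_{ijk}$ of $Du$, not only degree one, but this does not affect your argument since the polynomial structure and the embedding $\cB_\alpha\hookrightarrow C^1$ still yield the required Lipschitz estimate for $N$ on bounded sets.
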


\begin{remark}
	Obviously, it is possible to consider other types of right hand sides for \eqref{quasilinear_system} which are neither polynomials nor Nemyitskii operators and obtain similar results.
	For instance, the fractional flux-type nonlinearity \( f(u) =(-\Delta)^\sigma(u^q)\) would work choosing \( q\ge1\) suitably small. 
\end{remark}


\section{Stochastic examples}
\label{sec:stochastic_examples}
Herein we fix a complete, filtered probability space \( (\Omega,\mathcal F,(\mathcal F_t)_{t\in[0,T]},\mathbb P) \) with a right-continuous filtration $(\cF_t)_{t\in[0,T]}$.  
Basic examples of rough paths \( \X =(X,\mathbb X)\) are provided by the theory of stochastic processes and It\^o (or Stratonovich) integration. 
For instance if \( \gamma_0<\frac12 \), a \( d \)-dimensional Wiener process \( W \colon \Omega\times[0,T]\to\R^d\), gives rise to the following \( \mathscr C^{\gamma_0}(0,T;\R^d) \)-valued random variables 
\[
\mathbf W_{s,t}=(W_t-W_s,\mathbb W_{s,t}),\quad 
\mathbf{\tilde W}_{s,t}=(W_t-W_s,\mathbb {\tilde W}_{s,t}),\quad 0\le s\le t\le T,
\]
where for \( 1\le i,j\le d \) the term \( \mathbb W^{i,j}_{t,s}=\int_s^t \dd W_{r,s}^idW^j_r \) is understood in the It\^o sense, while \( \mathbb{\tilde W}^{i,j}_{t,s}= \int_s^t \dd W^i_{r,s}\circ dW^j_r = \mathbb W_{t,s} + \frac12(t-s)\mathbf 1_{i=j}\) corresponds to the Stratonovich integration.

\subsection{The stochastic Shigesada-Kawasaki-Teramoto population model}\label{skt}
Let $\dom\subset\R$ be an open bounded domain. We fix parameters $\alpha_1, \alpha_2, \delta_{11}, \delta_{21} > 0$. We are interested in studying a cross-diffusion SPDE, which was originally introduced by Shigesada, Kawasaki and Teramato (SKT) in the deterministic setting, to analyze population segregation by induced cross-diffusion in a two-species model. Note that the nonlinear {drift} term correspond to those arising in the classical Lokta-Volterra competition model. The stochastic SKT system is given by 
\begin{equation}
	\label{eq:SKT}
	\begin{split}
		&du_{1}=\left(\Delta\left(\alpha_{1} u_{1}+\gamma_{1} u_{1} u_{2}+\beta_{1} u_{1}^{2}\right)+\delta_{11} u_{1}-\theta_{11} u_{1}^{2}-\theta_{12} u_{1} u_{2}\right) dt+F_{1}(u_{1}, u_{2}) d  W^{1}_t, \\
		&du_{2}=\left(\Delta\left(\alpha_{2} u_{2}+\gamma_{2} u_{1} u_{2}+\beta_{2} u_{2}^{2}\right)+\delta_{21} u_{2}-\theta_{21} u_{1} u_{2}-\theta_{22} u_{2}^{2}\right) d t+F_2(u_{1}, u_{2}) dW^{2}_t,
	\end{split}
\end{equation}
for $t \in[0,T]$ and $x \in \dom$ and is supplemented with the following boundary and initial conditions:
\begin{align*}
	&\frac{\partial}{\partial n} u_{1}(t,x) = \frac{\partial}{\partial n} u_{2}(t,x) = 0, & t > 0, \; x \in \partial \dom,\\
	&u_{1}(x, 0) = u_{1}^0(x) \geq 0,\quad  u_{2}(x, 0) = u_{2}^0(x) \geq 0, &x \in \dom.
\end{align*}
$W = (W^1, W^2)$ is a two-dimensional Wiener process and the equation~\eqref{skt} is understood in the It\^o sense. Its solution $u:=(u_1,u_2)$, where $u_1=u_1(x, t)$ and $u_2 = u_2(x, t)$,
represents the densities of two competing species $S_1$ and $S_2$ at certain location $x \in \dom$, at time $t$.
The coefficients $\theta_{11}, \theta_{22} > 0$ denote the intraspecies competition rates in $S_1$, respectively in $S_2$
and $\theta_{12}, \theta_{21} > 0$ stand for the interspecies competition rates between $S_1$ and $S_2$. Furthermore, the terms $\Delta(\beta_1 u_1^2)$ and $\Delta(\beta_2 u_2^2)$ represent the self-diffusions of $S_1$ and $S_2$ with rates $\beta_1, \beta_2 \ge 0$, and $\Delta(\gamma_1 u_1 u_2)$, $\Delta(\gamma_2 u_1 u_2)$ represent the cross-diffusions of $S_1$ and $S_2$ with rates $\gamma_1, \gamma_2 \ge 0$. The nonlinear terms $F_1$ and $F_2$ are Nemytskii operators as above. Further details on this model are available in~\cite[Chapter 15]{Yagi1}. The SKT system \eqref{eq:SKT} can be rewritten as an abstract quasilinear SPDE on the product space $\mathbb{L}^2(\cO):=L^2(\cO)\times L^2(\cO)$
\begin{equation}
	\label{eq:SKT_abs}
	\begin{cases}
		&du=[L(u) u+N(u)]~ d t+F(u)\cdot d{\bf W}_t, \quad t \in[0, T] \\
		&u(0)=u_{0},
	\end{cases}
\end{equation}
where 
\[
L(u) u:=\textnormal{div}(B(u)\nabla u)-\Gamma u,
\]
with 
\[B(u)=\begin{pmatrix}
	\alpha_{1}+2\beta_1u_{1} + \gamma_1 u_{2} & \gamma_1 u_{1} \\
	\gamma_2 u_{2} & \alpha_{2}+2 \beta_2 u_{2} +\gamma_2 u_{1}
\end{pmatrix}, \qquad 
\Gamma(u)=\begin{pmatrix}
	\delta_{11} & 0 \\
	0 & \delta_{21}
\end{pmatrix}.\]
Here ${\bf W}(\omega)$ is the It\^o lift of a two-dimensional Brownian motion, which belongs to $\cC^{\gamma_0}(\R^2)$ for every $\gamma_0<\frac{1}{2}$ and each $\omega\in \Omega^{\gamma_0}$, where $\Omega^{\gamma_0}\subset\Omega$ of full probability. 
Furthermore, the nonlinear term $N$ corresponds to the Lotka-Volterra type competition model
\[
N(u)=\begin{pmatrix}
	2\delta_{11} u_{1}-\theta_{11} u_{1}^{2}-\theta_{12} u_{1} u_{2} \\
	2\delta_{21} u_{2}-\theta_{21} u_{1} u_{2}-\theta_{22} u_{2}^{2}
\end{pmatrix}.\]
\begin{theorem}
 Let $\alpha\in(\frac{3}{4},1]$ and let $u_0$ belong to an open set of $\cB_\alpha$. Then there exists a stopping time $\tau\in(0,T)$ and a unique solution  $(u,F(u))\in \cD^{\gamma,\gamma}_{W,\alpha,0}(\gamma)$ of~\eqref{skt} such that the path component $$u\in C^0([0,\tau);\cB_\alpha)\cap C^{\gamma,\gamma}([0,\tau);\cB_{\alpha-\sigma}).$$ 
\end{theorem}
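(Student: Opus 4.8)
The statement is the instance of Theorem~\ref{thm:system_B} with $\cO\subset\R$ a bounded interval, Neumann boundary conditions and $p=2$, once we recast \eqref{eq:SKT} as \eqref{eq:SKT_abs} on $\cX=\mathbb L^2(\cO)$ with $L(v)=\mathrm{div}(B(v)\nabla\,\cdot\,)-\Gamma$, drift $N$ the Lotka--Volterra polynomial, and $F=(F_1,F_2)$ the Nemytskii rough term. The plan is to carry out the verification of the structural hypotheses. We use the Sobolev scale $\cB_\beta:=W^{2\beta,2}_{\mathscr N}(\cO;\R^2)$, which satisfies Assumption~\ref{ass:intermediate} by Example~\ref{exa:scales}, and which has bounded imaginary powers since $-\Delta_{\mathscr N}-\Gamma$ is positive self-adjoint, so the endpoint $\alpha=1$ is covered. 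Fix $\gamma_0\in(\gamma,\tfrac12)$; the It\^o lift $\mathbf W(\omega)$ lies in $\mathscr C^{\gamma_0}(0,T;\R^2)$ for every $\omega$ in a full-probability set $\Omega^{\gamma_0}$. Since $\alpha>\tfrac34>1-\gamma$, one is forced (and allowed) to work with a strictly positive \emph{artificial} spatial loss: I take $\sigma:=\alpha-1+\gamma\in(0,\gamma]$ (so that $\varepsilon:=(\gamma-\sigma)_+=1-\alpha$ and $\alpha+\gamma-\sigma=1$), $\gamma':=\gamma$, an auxiliary $\eta\in(\tfrac34,\alpha)$, and $\beta_0\in(\tfrac14\vee\sigma,\gamma_0)$; all these parameters land in the admissible ranges of Theorems~\ref{thm:main},~\ref{thm:RPDE},~\ref{thm:RPDE_critical} and of Lemma~\ref{lem:composition}.

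For the quasilinear part, written in non-divergence form $L(v)u=B(v)\Delta u+(B'\nabla v)\cdot\nabla u-\Gamma u$ (here $B'$ is a constant tensor because $B$ is affine in $u$), I would verify Assumption~\ref{ass:L_y} with $V:=\{v\in\cB_\eta:\ B(v(x))\text{ uniformly positive definite on }\bar\cO\}$, which is open in $\cB_\eta$ by the embedding $\cB_\eta\hookrightarrow C^1(\bar\cO)$ (valid since $2\eta>\tfrac32$) and contains a neighbourhood of any nonnegative state, hence of the admissible data. Property~\ref{Q1} holds because $B(v)$ and $B'\nabla v$ are affine in $v$, respectively linear in $\nabla v$, and bounded in $L^\infty(\cO)$ by $|v|_{C^1}\lesssim|v|_\eta$, giving $|L(v)u-L(w)u|_0\lesssim|v-w|_\eta\,|u|_1$. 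Property~\ref{Q2} is the classical fact that a uniformly strongly elliptic second-order divergence-form operator with $C^1$ coefficients and Neumann conditions on an interval is sectorial on $L^2$ with domain exactly $W^{2,2}_{\mathscr N}$, the constants being uniform on a small $\cB_\eta$-neighbourhood $V_{v_0}$ of $v_0\in V$ since both the ellipticity constant and the $C^1$-norms of $B(v)$ are locally uniformly controlled; see \cite{amann1985global,Yagi1}. For $\alpha=1$ the extra hypotheses (i)--(ii) of Theorem~\ref{thm:main} are immediate: $L$ is time-independent so $\dd L\equiv0$, and $v\mapsto L(v)u$ is affine, hence $C^\infty$ and in particular $C^{2-}$ with globally Lipschitz $DL$.

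For the nonlinearities, I would use that in one space dimension $\cB_\beta=W^{2\beta,2}(\cO)$ is a Banach algebra for every $\beta>\tfrac14$, in particular for $\beta\in[\beta_0,1]$. Hence the Lotka--Volterra drift $N(u)$, a quadratic polynomial in $u$, maps $\cB_\beta$ into itself (drift loss $\delta=0$) and is smooth and locally Lipschitz; and the Nemytskii operator $F$, being composition with a smooth function of $u$ (no derivatives), is smooth from $\cB_\beta$ into $\cB_\beta\hookrightarrow\cB_{\beta-\sigma}$ for $\beta\in[\beta_0,\alpha]$, so the artificial loss $\sigma$ is absorbed. After truncating $N$ and $F$ outside a ball as in Section~\ref{ssec:cast_aside_bd}, all the constants of Lemma~\ref{lem:composition} are finite, Theorems~\ref{thm:RPDE}/\ref{thm:RPDE_critical} apply to the truncated problem, and the localization and concatenation argument of Section~\ref{ssec:cast_aside_bd} yields, for each $\omega\in\Omega^{\gamma_0}$, a unique maximal controlled-path solution $(u,F(u))\in\cD^{\gamma,\gamma}_{W(\omega),\alpha,\sigma}([0,\tau(\omega));1-\alpha)$ with $u\in C([0,\tau(\omega));\cB_\alpha)\cap C^{\gamma,1-\alpha}([0,\tau(\omega));\cB_{\alpha-\sigma})$ satisfying the variation-of-constants formula \eqref{e:rpde3}. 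Finally, causality of the fixed-point construction and continuity of the solution map (the Corollary after the Claim in Section~\ref{sec:evolution}) make $t\mapsto u_t$ be $(\cF_t)$-adapted, and $\tau$, being the blow-up/exit time of an adapted continuous process from the open set $V\cap\cB_\alpha$, is a stopping time, which completes the argument.

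I expect the main obstacle to be the \emph{uniform} verification of \ref{Q2}: one must control the sectoriality constants of the whole family $\{L(v)\}_{v\in V_{v_0}}$ of divergence-form operators through a common ellipticity constant and the $C^1$-norm of $B(v)$, and identify their common domain with $W^{2,2}_{\mathscr N}$. This is precisely what forces $\cB_\eta\hookrightarrow C^1(\bar\cO)$, i.e.\ $\eta>\tfrac34$, and together with the compatibility constraints linking $\sigma$, $\gamma$ and $\gamma_0$ in Lemma~\ref{lem:composition}, it is the source of the lower bound $\alpha>\tfrac34$.
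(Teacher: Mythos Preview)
Your proposal is correct and follows the same high-level route as the paper: recast \eqref{eq:SKT} as the abstract problem \eqref{eq:SKT_abs}, verify Assumption~\ref{ass:L_y} for $L(v)=\Div(B(v)\nabla\cdot)-\Gamma$ on the scale $\cB_\beta=\mathbb H^{2\beta}(\cO)$, check the drift and rough nonlinearities are compatible, and invoke Theorem~\ref{thm:system_B}.

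A few differences are worth noting. The paper outsources the verification of \ref{Q1}--\ref{Q2} to \cite[Propositions~15.1 and~15.3]{Yagi1}, which treats $L(v)$ directly in divergence form and yields \ref{Q2} already for $\eta=\frac{1+\varepsilon_1}{2}\in(\tfrac12,\tfrac34)$; you instead pass to non-divergence form and need $\cB_\eta\hookrightarrow C^1$, hence $\eta>\tfrac34$. Both work since the theorem only requires $\eta<\alpha$ and $\alpha>\tfrac34$, but the paper's route explains why $\alpha>\tfrac34$ is the natural threshold (it is the boundary condition index for the fractional-power domains), whereas in your argument it appears only through the artificial choice $\eta>\tfrac34$. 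You also correctly observe that since $\alpha>1-\gamma$ one must take a positive artificial loss $\sigma=\alpha-1+\gamma$, landing in the second bullet of Theorem~\ref{thm:system_B}; the paper's statement writes $\cD^{\gamma,\gamma}_{W,\alpha,0}(\gamma)$, which is at odds with that constraint, so your reading is the consistent one. One omission: your claim that $V$ contains a neighbourhood of every nonnegative state tacitly uses the parameter restriction $\gamma_1^2<8\alpha_1\beta_1$, $\gamma_2^2<8\alpha_2\beta_2$ of \eqref{pos:def:skt}, which the paper makes explicit and which you should state.
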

\begin{proof}
	In order to ensure the positive definiteness of the matrix $B$, the following restriction on the parameters is necessary:
	\begin{align}\label{pos:def:skt}
		\gamma_{1}^{2}<8\alpha_{1}\beta_{1}\quad\mbox{and}\quad\gamma_{2}^{2}<8\alpha_{2}\beta_{2}.
	\end{align}
	This assumption is required in order to show that $L(u)$ satisfies Assumption~\ref{ass:L_y} (with $\eta=\frac{1+\varepsilon_1}{2}$, for  $\varepsilon_1\in(0,\frac{1}{2})$) for $u$ belonging to an open set of $\mathbb{H}^{1+\varepsilon_1}(\cO):=\mathbb{H}^{1+\varepsilon_1}(\cO)\times \mathbb{H}^{1+\varepsilon_1}(\cO) $ for some $0<\varepsilon_1<\frac{1}{2}$, see~\cite[Proposition 15.1]{Yagi1}. Moreover, for $u\in \mathbb{H}^{1+\varepsilon_1}(\dom)$, one can show that $D(L(u)^\theta)=[\mathbb{L}^2(\dom),\mathbb{H}_N(\dom)]_\theta$ for any $0\leq \theta\leq 1$, where $[\cdot,\cdot]_\theta$ stands for complex interpolation.
	This further leads to~\cite[Proposition 15.3]{Yagi1} 
	\begin{align*}
		\begin{cases}
			D(L(u)^\theta) = \mathbb{H}^{2\theta}(\dom),~ \mbox{ for } 0\leq\theta<\frac{3}{4}\\
			D(L(u)^\theta)= \mathbb{H}^{2\theta}_N(\dom),~ \mbox{ for } \frac{3}{4}<\theta\leq 1.
		\end{cases} 
	\end{align*}
	The set of initial conditions is given by $\cK =\{ u_0 \in H^{1+\varepsilon_2}(\cO)\times H^{1+\varepsilon_2}(\cO), u_0\geq 0, v_0\geq 0 \}$ for some $0<\varepsilon_1<\varepsilon_2$ and the nonlinear drift term $N$ maps $\mathbb{H}^{1+\varepsilon_1}(\dom)$ into $\mathbb{L}^2(\dom)$. This means that we can work with the Sobolev tower built from the fractional powers of the operator $L(u)$ for $u\in\cB_\eta$. Therefore we obtain the spaces $\cB_\alpha= \mathbb{H}^{2\alpha}(\dom)$, for $\alpha=\frac{1+\varepsilon_2}{2}$. 
	Putting this together for  $\alpha=\frac{1+\varepsilon_2}{2}$, we observe that we are in the setting of Theorem~\ref{thm:system_B}. Therefore the conclusion follows.
\end{proof}
\begin{remark}
	The existence of a {\em pathwise mild solution} $u\in C([0,\tau];\mathbb{H}^{1+\varepsilon_2}(\dom))$ for~\eqref{skt} was obtained in~\cite{kuehn2018pathwise}. The equivalence of these two solution concepts (pathwise mild and controlled rough path) will be discussed in a forthcoming work.
\end{remark}

\subsection{Stochastic Landau-Lifshitz-Gilbert equation}
\label{subsec:LLG}
For any given \( \phi\in W^{1,\infty}_{\mathscr B}(\cO) \),
we aim to derive the existence of pathwise local mild solutions of the stochastic Landau-Lifshitz-Gilbert (LLG) system. 
 This can be written as the following vector-valued stochastic equation in the Stratonovich sense
\begin{equation}
\label{LLG}
\left \{\begin{aligned}
	&du = (\Delta u + u\times\Delta u+ u|\nabla u|^2)dt + u\times \circ dW_\phi -\epsilon u\times(u\times \circ dW_\phi)\,\quad 
\text{ on }[0,T]\times\dom\,,
\\
&\mathscr Bu_t=0\text{ for each }t\in [0,T], 
\\
& u_0\text{ given in } W^{2\alpha,p}_{\mathscr B}(\dom;\mathbb S^2)\,~ \text{ with } |u_0(x)|=1 \text{ a.e.}
\end{aligned}\right .
\end{equation}
where \( \epsilon\in \{0,1\}\),
 \( W_{\phi,t}(x)= \phi(x)W_t\), for a 3-dimensional Wiener process \( W _t(\omega)=(W^1_t,W^2_t,W^3_t)(\omega),\) while \( \times \) denotes vector product. This equation describes the magnetization of a ferromagnetic material occupying a domain $\cO$ and the noise term is introduced in order to model thermal fluctuations. Based on a Doss-Sussman transformation weak martingale solutions have been investigated in~\cite{br1}. A similar transformation was considered in~\cite{hausenblas} in order to treat the LLG equation driven by a geometric rough path.\\
Here we justify that~\eqref{LLG} fits into our abstract functional analytic setting and derive the existence of mild solutions in the sense of Definition~\ref{def:var_sol}. Note that the quasilinear term has the form \eqref{generic_A} with $m=2$, \( k=0\), $n_1=3$ and for each \( y\in \R^3 \)
\[
\mathscr L(y)= \sum_{1\le i,j\leq n}a_{ij}(y)D_iD_j
\,,\quad \text{for }
a_{ij}(y):=\mathbf1_{i=j}(\id_{3}+y\times\cdot)\,.
\]
Observe that for each $y\in \R^3$, \(\zeta\in \R^n,\) and \(\xi\in\C^3\)
\[
\Re \langle a(y;\xi)\zeta,\zeta\rangle =|\xi|^2\Re\langle \zeta-y\times\zeta,\zeta\rangle=|\xi|^2|\zeta|^2
\,,
\]
hence the property \eqref{coercive} is clear.
Moreover, since the previous right hand side is independent of \( y \), we may take \( G=\R^{\mu(0)}=\R^3 \) in \eqref{nota_V}, which implies that \( V^{\alpha,p}(\cO)\) is the whole space \(W^{\alpha,p}(\cO) .\)

\begin{theorem}
Let \( n<p< \infty \) and \( \alpha \in (\frac12 + \frac{n}{2p},1] \).
Then there exists a stopping time \( \tau>0 \), a random variable \( (u,u')\colon \Omega\to \cD^{\gamma,\gamma}_{X,\alpha,0}(0,\tau;\gamma) \)
with the property that \( \mathbb P\)-almost surely \( u'_t(\omega,x)=f_\phi(x,u_t(\omega,x)), \text{ for all } t\in[0,\tau)\), where
\begin{multline}
	\label{f_LLG}
f_\phi(x,y):=
\phi(x)\begin{pmatrix}
0 & - y^3 & y^2\\
y^3 & 0 & -y^1\\
-y^2 & y^1 & 0
\end{pmatrix}
\\
+
\epsilon\phi(x)\begin{pmatrix}
	(y^2)^2+(y^3)^2 &  y^1y^2 & y^1y^3\\
	y^2y^1 & (y^1)^2 +(y^3)^2 & y^2y^3\\
	y^3y^1 & y^3y^2 & (y^1)^2+(y^2)^2
\end{pmatrix},
\end{multline}
for \( (x,y)\in \dom\times \R^3\),
and a set \( \Omega_0\in \mathcal F \) of full measure such that for each \( \omega\in \Omega_0 \), the pair \( (u(\omega),u'(\omega)) \) solves \eqref{LLG}, understood here as the following RPDE with random coefficients
\[
	du = (\Delta u + u\times\Delta u+ u|\nabla u|^2)dt + u' \cdot d\mathbf{\tilde W} \,,
\]
in the sense of Definition \ref{def:var_sol}. This solution is unique amongst the previous class.
\end{theorem}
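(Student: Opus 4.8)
The plan is to fix, for a.e.\ realisation, the Stratonovich lift $\X=\mathbf{\tilde W}(\omega)\in\mathscr C^{\gamma_0}(0,T;\R^3)$ and read \eqref{LLG} as an instance of the quasilinear rough PDE \eqref{e:rpde2}, so that Theorem~\ref{thm:main} (and Theorem~\ref{thm:RPDE_critical} in the endpoint case $\alpha=1$) can be applied $\omega$-wise; the measurability and stopping-time assertions then follow from the continuity of the solution map together with the adaptedness of the lift. Concretely I would use the scale $\cB_\beta=W^{2\beta,p}_{\mathscr B}(\dom;\R^3)$ over $\cX=L^p(\dom;\R^3)$, $\cX_1=W^{2,p}_{\mathscr B}$, which obeys Assumption~\ref{ass:intermediate} in each admissible boundary setting of Subsection~\ref{neumann} (real interpolation / Sobolev tower, Example~\ref{exa:scales}), and pick $\gamma<\gamma_0<\tfrac12$ together with a spatial loss $\sigma\in[0,\gamma]$ subject to the window of Theorem~\ref{thm:main}; since the admissible data satisfy $\alpha\in(\tfrac12+\tfrac n{2p},1]$, this means $\sigma=0$ when $\alpha<1-\gamma$ and a small positive $\sigma$ otherwise, which is harmless because the rough nonlinearity will be seen to carry essentially no genuine loss.

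For the quasilinear part the excerpt already records that $L(y)u=\Delta u+y\times\Delta u$ has a $y$-independent principal symbol, so \eqref{coercive} holds and one may take $G=\R^3$, $V^{\alpha,p}=W^{2\alpha,p}_{\mathscr B}$. I would then verify Assumption~\ref{ass:L_y}: the resolvent bound \ref{Q2} is the usual consequence of strong parabolicity, with $V=\cB_\eta$ for any $\eta\in(\tfrac n{2p},\alpha)$, and \ref{Q1} follows from $|L(y)u-L(z)u|_{0}\le|y-z|_{L^\infty}|u|_1\lesssim|y-z|_\eta|u|_1$ using $W^{2\eta,p}\hookrightarrow L^\infty$ (so $\partial V=\emptyset$). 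Since $L$ is affine in $y$, its derivative $DL$ is constant, hence the additional hypotheses in the second bullet of Theorem~\ref{thm:main} (equivalently \eqref{lip_constant_0_RPDE}--\eqref{lip_constant_1_RPDE}) are automatic when $\alpha=1$.

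For the semilinear terms I would exploit $p>n$ and $\alpha>\tfrac12+\tfrac n{2p}$: $W^{s,p}$ is a Banach algebra continuously embedded in $C^0$ whenever $s>\tfrac np$, which holds for $s=2\beta$ on the whole range $[\beta_0,\alpha]$ demanded by Theorem~\ref{thm:main} and for $s=2\beta-1$ when $\beta\ge\alpha$. Hence $F(u)=f_\phi(\cdot,u)$, the Nemytskii operator attached to the degree-$2$ polynomial \eqref{f_LLG} with $W^{1,\infty}_{\mathscr B}$ coefficients, is $C^\infty$ and bounded on balls from $\cB_\beta$ into $\cB_{\beta-\sigma}$ with all the constants of Lemma~\ref{lem:composition} finite, while the drift $N(u)=u|\nabla u|^2$ maps $\cB_\beta$ into $W^{2\beta-1,p}=\cB_{\beta-1/2}$ and is locally Lipschitz, so after the $\chi_R$-localisation of Subsection~\ref{ssec:cast_aside_bd} it satisfies $\|N\|_1<\infty$ with loss $\delta=\tfrac12<\alpha$. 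Theorem~\ref{thm:main} (or Theorem~\ref{thm:RPDE_critical} when $\alpha=1$) then yields, for every $\omega$ in the full-measure set $\Omega^{\gamma_0}$ on which $\mathbf{\tilde W}(\omega)$ is a genuine $\gamma_0$-Hölder rough path (the Kolmogorov-type bound recalled at the start of Section~\ref{sec:stochastic_examples}), a maximal time $\tau(\omega)>0$ and a unique maximal controlled solution $(u(\omega),u'(\omega))=\mathcal S(\mathbf{\tilde W}(\omega),u_0)$ with $u'(\omega)_t=F(u(\omega)_t)=f_\phi(\cdot,u(\omega)_t)$ lying in $\cD^{\gamma,\gamma}_{X,\alpha,0}(0,\tau;\gamma)$; when a positive auxiliary $\sigma$ was used, one first solves in $\cD^{\gamma,\gamma'}_{X,\alpha,\sigma}(0,\tau;\gamma-\sigma)$ and then upgrades using $F(u)\in\cB_\alpha$ and Corollary~\ref{cor:improvereg} applied to \eqref{e:rpde3}, the smaller weight $\gamma-\sigma$ only strengthening the statement. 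Finally, $\omega\mapsto\mathbf{\tilde W}(\omega)$ is adapted and measurable into $\mathscr C^{\gamma_0}$, so composing with the continuity of the solution map established in Section~\ref{sec:evolution} (one embeds the $\X$-dependent targets into a common space via $\rho_{\gamma_0}$) gives measurability of $(u,u')$; since the fixed point on $[0,t]$ uses only $\mathbf{\tilde W}|_{[0,t]}$, the local solution is adapted and $\tau=\sup_nT^n$ is a stopping time, and restricting to $\Omega_0:=\Omega^{\gamma_0}$ and unwinding the rough integral with $\X=\mathbf{\tilde W}(\omega)$ recovers \eqref{LLG} in the sense of Definition~\ref{def:var_sol}; uniqueness in the stated class is the uniqueness part of Theorem~\ref{thm:main}.

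The step I expect to be the real work is the one just described: one has to confirm that the gradient-quadratic drift $u|\nabla u|^2$ — which is not literally of the polynomial-linear-in-$Du$ type treated in Theorem~\ref{thm:system_B} — sits in the scale with an admissible loss $\delta<\alpha$, and that the full range $\alpha\in(\tfrac12+\tfrac n{2p},1]$ (in particular $\alpha$ near $1$, where $\alpha\le1-\gamma+\sigma$ forces $\sigma>0$) stays compatible with the window $(1-\gamma-\gamma_0+2\sigma,1-\gamma+\sigma]$, with the algebra threshold $2\beta_0>n/p$, and with the regularity of $\phi$ needed for $F$ to act on $\cB_\beta$ without exceeding the admissible loss; once this bookkeeping is settled, everything else is a routine specialisation of the abstract machinery together with the standard pathwise-lift and continuity argument.
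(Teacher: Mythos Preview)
Your approach is essentially the same as the paper's: fix $\omega\in\Omega^{\gamma_0}$ on the full-measure set where the Stratonovich lift $\mathbf{\tilde W}(\omega)$ is a $\gamma_0$-H\"older rough path, and apply the abstract well-posedness theory $\omega$-wise. The paper's proof is in fact shorter than yours --- it simply invokes Theorem~\ref{thm:system_B} (noting that affine-linearity of $v\mapsto L(v)$ makes \eqref{coef_regularity_critical} void, so $\alpha=1$ is allowed) --- whereas you unpack the verification of hypotheses for Theorem~\ref{thm:main} directly; but this is the same argument at a different level of detail.

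One small correction: your worry that $u|\nabla u|^2$ is ``not literally of the polynomial-linear-in-$Du$ type treated in Theorem~\ref{thm:system_B}'' is misplaced. The template \eqref{polynomial_g} allows arbitrary non-negative integer exponents $\nu_{ijk}$ on $D_ku^j$, so the term $u^i(D_ku^j)^2$ fits with $\mu_{ijk}=1$, $\nu_{ijk}=2$, $g_{i,k}\equiv1$. Hence the drift does fall under Theorem~\ref{thm:system_B} verbatim, and the additional bookkeeping you flag as ``the real work'' is already absorbed there. Your extra care with measurability of $(u,u')$ and the stopping-time property of $\tau$ (via continuity of the solution map and adaptedness of the lift) is a welcome addition that the paper's proof leaves implicit.
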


\begin{proof}
	It is well-known that the Stratonovitch lift \( \mathbf{\tilde W}(\omega)\) as defined above is in \( \mathscr C^{\gamma_0}(\R^3)\) for every \( \gamma_0<\frac12\) and each \( \omega\) in a set  \(\Omega^{\gamma_0} \subset \Omega\) with full-probability.
If we fix such \( \omega\in \Omega^{\gamma_0}\), the equation \eqref{LLG} thus falls into the framework of Theorem \ref{thm:system_B} and the conclusion follows. (Observe that since the map \( v\mapsto L(v)\) is affine linear, the condition \eqref{coef_regularity_critical} is void, hence the case \( \alpha=1\) is allowed.)
\end{proof}

We point out the following notion of solution for~\eqref{LLG} which was investigated in~\cite{gussetti2021} for linear noise (in which case it is possible to obtain a priori estimates for \eqref{LLG}, in suitable Sobolev spaces). This will turn out to be equivalent to the concept introduced in Definition~\ref{def:var_sol} as justified in Section~\ref{rds}.
\begin{remark}[Alternative notion of solution in 1D]
\label{rem:LLG}
Fix \( \epsilon=0\), \( n=1 \), \( \gamma\in (\frac13,\frac12)\), let \( \dom=\mathbf T \)
and take \( k\in \N, \phi\in H^{k+1}\).
In \cite{gussetti2021}, a solution of \eqref{LLG} starting at \( u_0 \in H^k\)
is defined as a continuous stochastic process $u\colon \Omega\times [0,T]\to H^{k-1}(\T;\R^3)$ satisfying \( u_0=u^0 \),
			\(\mathbb P(|u_t(x)|=1,\; \forall(t,x)\in[0,T]\times\T)=1=\mathbb P(u\in L^\infty (0,T;H^k)\cap L^2(0,T;H^{k+1}))\)
 and there exists a random variable $u^{\natural}\in L^0(\Omega; \cC_2^{3\gamma}(0,T;H^{k-1}))$ such that
			\begin{equation}
			\label{rLLG_def}
			\delta u_{s,t}-\int_s^t(\partial_{xx}u_r +u_r|\partial_x u_r|^2 +u_r\times\partial_x^2 u_r)  d r
			=G_{s,t}u_s + \mathbb G_{s,t}u_s + u^\natural_{s,t}\,,
			\end{equation}
			$\mathbb P$-a.s.\ as an equality in $L^2(\T ;\R^3)$ for every $s\leq t\in[0,T] $, and where \( (G_{t,s},\mathbb G_{t,s})\) is the pair of linear maps given for each \( v\in \R^3\) by
			\begin{equation}
				\label{G}
			G_{t,s}(x)v:= 
			\phi(x)\begin{pmatrix}
				0 & \dd W^3_{t,s} & -\dd W^2_{t,s}\\
				-\dd W^3_{t,s} & 0 & \dd W^1_{t,s}\\
				\dd W^2_{t,s} & -\dd W^1_{t,s} & 0
			\end{pmatrix}
		 \begin{pmatrix}
		 v^1\\
		 v^2\\
		 v^3
		 \end{pmatrix}
		 = f(x,v)\dd W_{t,s}
			\end{equation}
			while
			\begin{equation}
			\label{GG}
		\begin{aligned}
			\mathbb G_{t,s}(x)v&:= 
			\phi(x)^2\begin{pmatrix}
				-\mathbb{\tilde W}^{3,3}_{t,s} - \mathbb{\tilde W}^{2,2}_{t,s} &  \mathbb{\tilde W}^{1,2}_{t,s} & -\mathbb{\tilde W}^{1,3}_{t,s}\\
				-\mathbb{\tilde W}^{2,1}_{t,s} & -\mathbb{\tilde W}^{3,3}_{t,s} - \mathbb{\tilde W}^{1,1}_{t,s} & -\mathbb{\tilde W}^{2,3}_{t,s}\\
				\mathbb{\tilde W}^{3,1}_{t,s} & -\mathbb{\tilde W}^{3,2}_{t,s} & -\mathbb{\tilde W}^{2,2}_{t,s} - \mathbb{\tilde W}^{1,1}_{t,s}
			\end{pmatrix}
		\begin{pmatrix}
			v^1\\
			v^2\\
			v^3
			\end{pmatrix}
		\\&
		= D_2f(x,v) \otimes f(x,v)[\mathbb{\widetilde W}_{t,s}]\,.
		\end{aligned}
			\end{equation}
\end{remark}

\subsection{Random dynamical systems}\label{rds}
The main goal of this subsection is to establish the existence of flows for~\eqref{problem}. To this aim we assume that the solution constructed in Theorem~\ref{thm:main} is global-in-time. We emphasize that global well-posedness results for rough partial differential equations are difficult to obtain in general. See however \cite{HN20,HN21} for a treatment of the semilinear case under some linear growth conditions. Global well-posedness results for rough differential equations with a dissipative drift term have also been obtained in~\cite{Weberglobal}.\\
In order to construct a random dynamical system corresponding to~\eqref{problem}, we firstly introduce some concepts from the theory of random dynamical systems~\cite{Arnold}. The following definition describes a model of the driving noise.

\begin{definition}\label{mds} 
	Let $(\Omega,\mathcal{F},\mathbb{P})$ stand for a probability space and 
	$\theta:\mathbb{R}\times\Omega\rightarrow\Omega$ be a family of 
	$\mathbb{P}$-preserving transformations (i.e.,~$\theta_{t}\mathbb{P}=
	\mathbb{P}$ for $t\in\mathbb{R}$) having the following properties:
	\begin{description}
		\item[(i)] The mapping $(t,\omega)\mapsto\theta_{t}\omega$ is 
		$(\mathcal{B}(\mathbb{R})\otimes\mathcal{F},\mathcal{F})$-measurable, where 
		$\mathcal{B}(\cdot)$ denotes the Borel sigma-algebra;
		\item[(ii)] $\theta_{0}=\textnormal{Id}_{\Omega}$;
		\item[(iii)] $\theta_{t+s}=\theta_{t}\circ\theta_{s}$ for all 
		$t,s,\in\mathbb{R}$.
	\end{description}
	Then the quadrupel $(\Omega,\mathcal{F},\mathbb{P},(\theta_{t})_{t\in\mathbb{R}})$ 
	is called a metric dynamical system.
\end{definition}

\begin{definition}
	\label{rdsy} 
	A continuous random dynamical system on a separable Banach space $\cX$ over a metric dynamical 
	system $(\Omega,\mathcal{F},\mathbb{P},(\theta_{t})_{t\in\mathbb{R}})$ 
	is a mapping $$\varphi:[0,\infty)\times\Omega\times \cX\to \cX,
	\mbox{  } (t,\omega,x)\mapsto \varphi(t,\omega,x), $$
	which is $(\mathcal{B}([0,\infty))\otimes\mathcal{F}\otimes
	\mathcal{B}(\cX),\mathcal{B}(\cX))$-measurable and satisfies:
	\begin{description}
		\item[(i)] $\varphi(0,\omega,\cdot{})=\textnormal{Id}_{\cX}$ 
		for all $\omega\in\Omega$;
		\item[(ii)]$ \varphi(t+\tau,\omega,x)=
		\varphi(t,\theta_{\tau}\omega,\varphi(\tau,\omega,x)), 
		\mbox{ for all } x\in \cX, ~t,\tau\in[0,\infty),~\omega\in\Omega;$
		\item[(iii)] $\varphi(t,\omega,\cdot{}):\cX\to \cX$ is 
		continuous for all $t\in[0,\infty)$ and all $\omega\in\Omega$.
	\end{description}
\end{definition}

The second property in Definition~\ref{rdsy} is referred to as the 
cocycle property. The generation of a random dynamical system from an It\^{o}-type stochastic partial differential equation (SPDE) has been a long-standing open problem, since Kolmogorov's theorem breaks down for random fields parametrized by infinite-dimensional Banach spaces.~As a consequence it is not obvious how to obtain a random dynamical system from an SPDE, since its solution is defined almost surely, which contradicts the cocycle property. Particularly, this means that there are exceptional sets which depend on the initial condition and it is not clear how to define a random dynamical system if more than countably many exceptional sets occur. This issue does not occur in a pathwise approach. Under suitable assumptions on the coefficients, rough path driven equations generate random
dynamical systems provided that the driving rough path forms a rough path cocycle, as established in~\cite{BRiedelScheutzow}.\\
Let $(\Omega,\mathcal{F},\mathbb{P},(\theta_{t})_{t\in\mathbb{R}})$ be a metric dynamical system as in Definition~\ref{mds}. We say that 
\begin{align*}
\mathbf{X}=(X,\XX):\Omega\to C^{\gamma_{0}}_{\text{loc}}([0,\infty);\mathbb{R}^d) \times C^{2\gamma_0}_{\text{loc}}([0,\infty);\mathbb{R}^{d\times d}) 
\end{align*}
is a continuous ($\gamma_0$-H\"older) rough path cocycle if $\mathbf{X}|_{[0,T]}$ is a continuous $\gamma_0$-H\"older rough path for every $T>0$ and for every $\omega\in\Omega$ and the following cocycle property holds true for every $s,t\in[0,\infty)$ and $\omega\in\Omega$
\begin{align*}
&X_{s,s+t}(\omega)= X_t(\theta_s\omega)\\
&\XX_{s,s+t}(\omega)=\XX_{0,t}(\theta_s\omega).
\end{align*}
According to~\cite[Section 2]{BRiedelScheutzow} rough path lifts of various stochastic processes define cocycles.
These include Gaussian processes with stationary increments under certain
assumption on the covariance function~\cite[Chapter 10]{friz2014course} and particulary apply to the fractional Brownian motion with Hurst index $H>\frac{1}{4}$. Recall that here we fixed the $\gamma_0$-H\"older regularity of the rough path $\gamma_0\in(\frac{1}{3},\frac{1}{2})$, consequently the results obtained apply to fractional Brownian motion for $H\in(\frac{1}{3},\frac{1}{2}]$.

\begin{theorem}\label{thm:rds}
	Let $u_0\in\cB_\alpha$ and assume that the solution constructed in Theorem~\ref{thm:main} is global-in-time. Then its solution operator generates a continuous random dynamical system on $\cB_\alpha$. 
\end{theorem}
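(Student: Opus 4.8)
The plan is to define the candidate random dynamical system by $\varphi(t,\omega,x):=u_t$, where $(u,F(u))$ is the unique global-in-time solution of \eqref{problem} (equivalently, of the fixed point equation \eqref{e:rpde3}) with initial datum $x\in\cB_\alpha$ and driving rough path the cocycle $\X(\omega)$; by the standing global well-posedness hypothesis this is well-defined for every $t\ge0$, every $\omega\in\Omega$ and every $x\in\cB_\alpha$. Since the driving input of a random dynamical system must be autonomous, it is understood here that $L$, $N$ and $F$ carry no explicit time dependence, so that the only time-dependence of \eqref{problem} enters through $\X$. One then checks the three properties of Definition~\ref{rdsy} in turn.

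Properties (i) and (iii) come essentially for free. Evaluating \eqref{nota:solution} at $t=0$ gives $\varphi(0,\omega,x)=S_{0,0}x=x$, since $S_{0,0}=I$ and both integrals vanish. Continuity of $x\mapsto\varphi(t,\omega,x)$ on $\cB_\alpha$, for fixed $t$ and $\omega$, is the continuity of the solution map proved at the end of Section~\ref{sec:evolution} (see the estimate following \eqref{nota:distance}) composed with the bounded evaluation $\cD^{\gamma,\gamma'}_{X,\alpha,\sigma}\ni(y,y')\mapsto y_t\in\cB_\alpha$. The required joint measurability of $(t,\omega,x)\mapsto\varphi(t,\omega,x)$ follows from the same source: the solution map $\mathcal S$ is continuous in $(\X,x)$, the map $\omega\mapsto\X(\omega)$ is $(\mathcal F,\mathcal B(\mathscr C^{\gamma_0}_{\mathrm{loc}}))$-measurable by the definition of a rough path cocycle, and $(t,(y,y'))\mapsto y_t$ is continuous; hence $\varphi$ is a Carath\'eodory map (continuous in $(t,x)$, measurable in $\omega$) and therefore jointly measurable.

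The cocycle property (ii) is the heart of the matter and rests on uniqueness together with the rough path cocycle identities $X_{\tau,\tau+t}(\omega)=X_t(\theta_\tau\omega)$ and $\XX_{\tau,\tau+t}(\omega)=\XX_{0,t}(\theta_\tau\omega)$. Fix $\omega,x,\tau$, write $u_\cdot:=\varphi(\cdot,\omega,x)$, put $y:=u_\tau=\varphi(\tau,\omega,x)$ and $v_t:=u_{t+\tau}$. Using the multiplicativity property \ref{P2} of the evolution family, $S^u_{t+\tau,r}=S^u_{t+\tau,\tau}S^u_{\tau,r}$, and splitting the integrals in \eqref{e:rpde3} at $r=\tau$, the bracket $S^u_{\tau,0}x+\int_0^\tau S^u_{\tau,r}N(u_r)\,dr+\int_0^\tau S^u_{\tau,r}F(u_r)\cdot d\X_r$ is exactly $u_\tau=y$; after the substitution $r=s+\tau$ this yields $v_t=S^u_{t+\tau,\tau}y+\int_0^t S^u_{t+\tau,s+\tau}N(v_s)\,ds+\int_0^t S^u_{t+\tau,s+\tau}F(v_s)\cdot d\X^{(\tau)}_s$, where $\X^{(\tau)}$ denotes the rough path with increments $(\dd X_{t+\tau,s+\tau},\XX_{t+\tau,s+\tau})$. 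Because $L$ is autonomous, $r\mapsto L(u_{r+\tau})=L(v_r)$, and by the uniqueness of the associated propagator (Remark~\ref{rem:evolution}) $S^u_{t+\tau,s+\tau}=S^v_{t,s}$, the evolution family generated by $r\mapsto L(v_r)$. Finally, by the cocycle identities $\X^{(\tau)}=\X(\theta_\tau\omega)$, so $(v,F(v))$ is a controlled rough path for $\X(\theta_\tau\omega)$ and, by the uniqueness characterisation \ref{itm:delta_S}--\ref{itm:integ_remainder} of the rough convolution, $\int_0^tS^v_{t,s}F(v_s)\cdot d\X^{(\tau)}_s=\int_0^tS^v_{t,s}F(v_s)\cdot d\X_s(\theta_\tau\omega)$. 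Hence $v$ solves \eqref{e:rpde3} with initial datum $y$ and driving rough path $\X(\theta_\tau\omega)$; by uniqueness in Theorem~\ref{thm:main}, $v=\varphi(\cdot,\theta_\tau\omega,y)$, i.e.\ $\varphi(t+\tau,\omega,x)=\varphi(t,\theta_\tau\omega,\varphi(\tau,\omega,x))$ for all $t,\tau\ge0$.

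The step I expect to cost the most care is the transfer of the rough convolution under the time shift, including the weight at the origin, which for $v$ is anchored at $t=0$ (original time $\tau>0$, an interior point of the global solution, where $u$ is in fact bounded) whereas the solution from $y$ naturally carries the $t^\varepsilon$ weight at its own origin; both paths nonetheless lie in $\cD^{\gamma,\gamma'}_{X(\theta_\tau\omega),\alpha,\sigma}(0,T;\varepsilon)$, and the cleanest way to identify the two integrals is through the uniqueness properties \ref{itm:delta_S}--\ref{itm:integ_remainder} rather than by manipulating compensated Riemann sums directly. It is also worth recording explicitly that the autonomy of $L,N,F$ is genuinely used in (ii): for time-dependent coefficients \eqref{problem} only produces a random two-parameter flow, not a cocycle over $(\theta_t)_{t\in\R}$.
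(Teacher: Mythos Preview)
The paper states Theorem~\ref{thm:rds} without proof: immediately after the statement it proceeds to the corollary on the Landau--Lifshitz--Gilbert equation, so there is no argument in the paper to compare against. Your proposal therefore has to be assessed on its own merits.

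Your argument is correct and follows the expected route for such results (compare \cite{BRiedelScheutzow}, which the paper cites as the template). The definition of $\varphi$, properties (i) and (iii), and the measurability argument are all fine. For the cocycle property (ii), the splitting of \eqref{e:rpde3} at $r=\tau$ via \ref{P2} and \ref{itm:delta_S}, the identification $S^u_{t+\tau,s+\tau}=S^v_{t,s}$ through the uniqueness of the propagator in Remark~\ref{rem:evolution}, and the transfer of the shifted rough path via the cocycle identities are exactly the right ingredients; invoking the uniqueness characterisation \ref{itm:delta_S}--\ref{itm:integ_remainder} to identify the shifted rough convolution is indeed cleaner than chasing Riemann sums. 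Your observation that autonomy of $L,N,F$ is genuinely required is correct and worth stating, since Theorem~\ref{thm:main} as written allows explicit time dependence. The remark about the weight at the origin is also well taken: since $u_\tau\in\cB_\alpha$ and the solution starting from $u_\tau$ lives in $\cD^{\gamma,\gamma'}_{X(\theta_\tau\omega),\alpha,\sigma}(0,T;\varepsilon)$ by Theorem~\ref{thm:main}, while the shifted path $v$ inherits the same regularity (the weight being trivially satisfied away from the original origin), uniqueness applies.
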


A particular application we have in mind is the following.

\begin{corollary}[Random dynamical system for the Landau-Lifshitz-Gilbert equation with linear noise in 1D]
	Let \( n=1\), take \( \dom=\mathbb T\) and fix some initial datum \( u_0\in H^2(\mathbf T;\R^3)\) such that \( |u_0(x)|=1,\forall x\).
	Suppose moreover that \( \epsilon=0\) in \eqref{f_LLG}.
	 The rough solution of \eqref{LLG} constructed in section \ref{subsec:LLG} is also a pathwise mild solution in \( \cD^{2\gamma}_{W(\omega),1}\), for any \( \gamma\in (\frac13,\frac12)\) and \( \omega\in \Omega^{\gamma}\). Moreover, it is global in time and generates a continuous random dynamical system on $\cX=H^2$ in the sense of Definition \ref{rdsy}.
\end{corollary}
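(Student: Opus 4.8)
The plan is to verify the three assertions of the corollary in turn: (i) that the rough solution constructed in Section~\ref{subsec:LLG} coincides with a pathwise mild solution in the sense of Remark~\ref{rem:LLG}; (ii) that it is global in time; and (iii) that the solution operator generates a continuous random dynamical system via Theorem~\ref{thm:rds}. For (i), I would fix $\gamma\in(\frac13,\frac12)$ and $\omega\in\Omega^\gamma$, and note that by the theorem in Section~\ref{subsec:LLG} applied with $n=1$, $p=2$, $\alpha=1$, $\epsilon=0$, there is a unique $(u,u')\in\cD^{\gamma,\gamma}_{W(\omega),1,0}(0,\tau;\gamma)=\cD^{2\gamma}_{W(\omega),1}(0,\tau)$ (using the identification from Lemma~\ref{lem:interpolation}) solving the RPDE $du=(\Delta u+u\times\Delta u+u|\nabla u|^2)dt+u'\cdot d\tilde{\mathbf W}$ in the sense of Definition~\ref{def:var_sol}, with $u'_t(x)=f_\phi(x,u_t(x))$. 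The identification with the formulation \eqref{rLLG_def} is then a matter of unwinding definitions: the remainder $u^\natural$ in \eqref{rLLG_def} is exactly the integral remainder $\mathscr R^{S;\zeta}$ associated to the rough convolution (up to the drift terms), the linear maps $G,\mathbb G$ in \eqref{G}--\eqref{GG} are precisely $f_\phi(x,\cdot)\dd W$ and $D_2f_\phi(x,\cdot)\otimes f_\phi(x,\cdot)[\widetilde{\mathbb W}]$ as indicated in the remark, and the regularity $u^\natural\in\cC_2^{3\gamma}(0,T;H^{k-1})$ with $k=2$ follows from the remainder estimate \eqref{e:integration2} in Theorem~\ref{thm:integral} (taking $\sigma=0$, so $\varepsilon=0$, $\beta=1$, $\kappa=-1$, giving regularity $\gamma_0+\gamma-\kappa=\gamma_0+\gamma+1$ in time loss, hence $3\gamma$-type H\"older control in the space $H^{k-1}$ after accounting for the spatial downgrade). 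One also checks $|u_t(x)|=1$ is preserved (this is standard for LLG and follows from testing the equation against $u$, or is inherited from the a priori bounds of \cite{gussetti2021}), and $u\in L^\infty(0,T;H^2)\cap L^2(0,T;H^3)$ again by those a priori estimates. Since both solution concepts have uniqueness, they must coincide on their common interval of existence.

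For (ii), the point is that \cite{gussetti2021} provides global-in-time a priori bounds for \eqref{LLG} with linear noise ($\epsilon=0$) in one dimension: the constraint $|u_t(x)|=1$ together with energy estimates controls $\|u_t\|_{H^2}$ uniformly on $[0,T]$, for every $T$ and every $\omega\in\Omega^\gamma$. By the blow-up alternative in Theorem~\ref{thm:main} (with $\alpha=1$: either $\limsup_{t\to\tau}|u_t|_1=\infty$ or $u$ leaves the open set $V$, which here is all of $W^{1,\infty}$-controlled space since $V^{\alpha,p}=W^{\alpha,p}$ by the discussion before the theorem in Section~\ref{subsec:LLG}), the uniform bound on $|u_t|_{H^2}$ rules out both blow-up scenarios, so $\tau=T$ for every $T$, i.e.\ the solution is global. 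I would cite \cite{gussetti2021} for the a priori estimate and \cite{HN20,HN21} for the concatenation/globalization mechanism, combined with the continuity of the solution map already established in Section~4.2.

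For (iii), I would invoke Theorem~\ref{thm:rds} directly: once global well-posedness is established, the solution operator of \eqref{problem} generates a continuous random dynamical system on $\cB_\alpha$; here $\cB_\alpha=H^{2\alpha}=H^2$ when $\alpha=1$. The remaining ingredient is that the Stratonovich lift $\widetilde{\mathbf W}$ of the three-dimensional Brownian motion $W_t(\omega)$ is a rough path cocycle over the canonical Wiener shift $(\theta_t)$, which is standard (see \cite[Section 2]{BRiedelScheutzow} and the discussion preceding Theorem~\ref{thm:rds}), so Theorem~\ref{thm:rds} applies verbatim and yields a continuous RDS on $\cX=H^2$ in the sense of Definition~\ref{rdsy}. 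The main obstacle is really step (i)--(ii): establishing rigorously that the rough-path a priori estimates of \cite{gussetti2021} transfer to our mild-solution framework and that the two notions of solution genuinely agree — this requires matching the $u^\natural$ term, the linear maps $G$, $\mathbb G$, and the functional-analytic setting (Sobolev tower for $\Delta+u\times\Delta$ with periodic boundary conditions, which is covered by Example~\ref{exa:sobolev_tower} since the relevant operator is, after the affine reduction, elliptic with bounded imaginary powers). Everything downstream of global existence is then a bookkeeping application of Theorem~\ref{thm:rds}.
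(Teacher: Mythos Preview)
Your three-part structure is right and step (iii) is correct, but step (i) has a genuine gap and your direction of argument is the reverse of the paper's.

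The paper does not start from the local mild solution of Section~\ref{subsec:LLG} and verify \eqref{rLLG_def}. It starts from the \emph{global} solution $u$ of \cite{gussetti2021} (which exists in the sense of Remark~\ref{rem:LLG}) and shows that this $u$ is a mild solution in the sense of Definition~\ref{def:var_sol}; uniqueness of mild solutions (Theorem~\ref{thm:RPDE_critical}) then forces it to coincide with the local mild solution, giving $\tau=T$ immediately --- no blow-up alternative is needed. Your route (mild $\Rightarrow$ \eqref{rLLG_def}, then borrow the a priori bounds, then blow-up) could work, but it requires the identification first, and this is where the real analysis lies. Your claim that ``$u^\natural$ is exactly the integral remainder $\mathscr R^{S;\zeta}$ (up to drift terms)'' is incorrect: the formulation \eqref{rLLG_def} uses the plain increment $\delta u_{s,t}$, the plain Lebesgue integral $\int_s^t(L_ru_r+N(u_r))dr$, and germs $G_{s,t}u_s,\ \mathbb G_{s,t}u_s$ at the \emph{left} endpoint with no $S_{t,s}$ factor, whereas the mild remainder involves $\dd^S u$, $\int_s^t S_{t,r}N(u_r)dr$, and $S_{t,s}(\zeta_t\cdot\dd X_{t,s}+\zeta'_t:(\XX-\dd X^{\otimes2})_{t,s})$ at the \emph{right} endpoint. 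The paper bridges these discrepancies by writing $u_t-S_{t,s}u_s=(I-S_{t,s})u_t+S_{t,s}\delta u_{t,s}$, inserting \eqref{rLLG_def}, and isolating a six-term remainder $\mathscr{\bar R}_{t,s}$, each piece of which is shown to lie in $\cC_2^{z}(\cZ)$ with $z>1$ and $\cZ\hookleftarrow L^2$ (this uses $u\in L^2(0,T;H^3)$, the bound $[\dd u]_{\gamma;H^1}<\infty$, and the forward-remainder estimate $|\cR_{t,s}|_{H^1}\lesssim|t-s|^{2\gamma}$ from \cite{gussetti2021}). Only then does the uniqueness principle of Remark~\ref{rem:uniqueness} identify $\mathscr{\bar R}$ with $\mathscr R^{S,\zeta}$. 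The estimate \eqref{e:integration2} you cite controls $\mathscr R^{S,\zeta}$, not $u^\natural$, so it does not close the gap; and incidentally, for $\alpha=1$ the relevant space is $\cD^{2\gamma}_{W,1}=\cD^{\gamma,\gamma}_{W,1,\gamma}$ (so $\sigma=\gamma$, not $\sigma=0$).
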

\begin{proof}
	Let \( T>0\) be arbitrary and take \( \gamma\in (\frac13,\frac12)\). We fix the sample parameter \( \omega\in \Omega^\gamma\) all throughout and omit it in the notations.
	In this setting, the existence of a solution in \(L^\infty(0,T;H^2)\cap L^2(0,T;H^3)\), in the sense of Remark \ref{rem:LLG}, has been established in \cite{gussetti2021}.
	By Theorem \ref{thm:rds} we only need to show that this solution, denoted by \( u\) in the sequel, is also a mild solution in $\cB_1=H^2$.
	
	As was observed in \cite{gussetti2021}, \( \dd u \) belongs to \(\cC^{\gamma}_2(0,T;\cB_{\alpha-\gamma})\), and moreover
	\begin{equation}\label{fwd_rest}
		|\cR_{t,s}|_{H^1} \lesssim |t-s|^{2\gamma},
	\end{equation}
	where \( \cR_{t,s}:=\dd u_{t,s}- G_{t,s}u_s \). In particular, if we denote by \( L:=L(u_\cdot)\) then by \eqref{lip_constant} we have that
	\( S:=\exp(\int L_rdr)\) is a well-defined evolution family. Moreover, it holds \( (u,F(u))\in \cD^{2\gamma}_{W,\alpha}\).
	Next, using that \( x-S_{t,s}x=-\int_s^tS_{t,r}L_rxdr \) for each \( x\in H^2\) (Bochner sense, in \( L^2\)), we find thanks to \eqref{rLLG_def} that 
	\[
	\begin{aligned}
	u_t - S_{t,s}u_s 
	&= (I-S_{t,s})u_t + S_{t,s}\dd u_{t,s}
	\\& 
	=-\int_s^t S_{t,r}L_ru_tdr + S_{t,s}[\int _s^t (L_ru_r+N(u_r))dr + (G_{t,s}+ \mathbb G_{t,s})u_s + u^{\natural}_{t,s}]
	\\&
	=\int _s^t S_{t,r}N(u_r)dr + S_{t,s}[G_{t,s} + \mathbb G_{t,s}-(G_{t,s})^2]u_t + \mathscr{\bar R}_{t,s}
	\end{aligned}
	\]
where
	\begin{align*}
	\mathscr{\bar R}_{t,s}
	= \int_s^t S_{t,r}(S_{r,s}-I)L_ru_rdr -\int _s^tS_{t,r}L_r \dd u_{t,r}dr 
	+\int _s ^tS_{t,r}(S_{r,s}-I)N(u_r)dr
	\\
	- S_{t,s} G_{t,s}(\dd u_{t,s}-G_{t,s}u_t)  -\mathbb{G}_{t,s} \dd u_{t,s} + S_{t,s}u^\natural_{t,s}.
	\end{align*}
	Is is plain to check that every term above belongs to \( \cC_2^{z}([0,T];\cZ)\) for some \( z>1\) and \( \cZ\hookleftarrow L^2\). For instance, regarding that \( r\mapsto L_ru_r\in L^2(0,T;H^1)\), we have for the first term
	\[
		 |\int_s^t S_{t,r}(S_{r,s}-I)L_ru_rdr|_0
		 \lesssim \int _s^t (r-s)^{1/2}|Lu_r|_{H^1}
		 \lesssim (t-s)(\int _s^t|u_r|_{H^3}^2dr)^{1/2}
	\]
	and we obtain a similar bound for the third term.
	For the second term, we use the H\"older regularity of the solution to get
	\[
	|\int _s^tS_{t,r}L_r \dd u_{t,r}dr |_{H^{-1}}
	\lesssim 
	\int _s ^t (t-r)^{\gamma}dr [\dd u]_{\gamma;H^1}\lesssim (t-s)^{1+\gamma}.
	\]
	Next, using \eqref{fwd_rest}
	 we observe that
	\[
	|S_{t,s} G_{t,s}(\dd u_{t,s}-G_{t,s}u_t)|_0
	\le 
	|S_{t,s}G_{t,s}\cR_{t,s}|_0 + |S_{t,s}G_{t,s}^2\dd u_{t,s}|_0
	\lesssim 
	|t-s|^{3\gamma } 
	\]
	and similarly for the fifth and last terms.\\
	Therefore, using the identities \eqref{G}-\eqref{GG} and the uniqueness statement of Remark \ref{rem:uniqueness} (with the choice \( \cZ=H^{-1}\)), we find that
	\[
	\begin{aligned}
		u_t - S_{t,s}u_s &
		=\int _s^t S_{t,r}N(u_r)dr + S_{t,s}(\zeta_t\cdot \dd W_{t,s}+ \zeta'_{t}:\mathbb{\tilde W}_{t,s}) + \mathscr R^{S,\zeta}_{t,s}
	\end{aligned}
	\]
where \( (\zeta,\zeta')=(F(u), DF(u)\circ F(u))\)
and \(\mathscr R^{S,\zeta}_{t,s}\) is the integral remainder as defined in Theorem \ref{thm:integral}. This shows in particular that \( u \) is a mild solution, and it is necessarily the same as the one supplied by Theorem \ref{thm:system_B} when \( \alpha=1\) (in particular \( \tau(\omega)= T\)). This concludes the proof.
\end{proof}

\appendix
\section{Some technical results}

\subsection{Perturbation results for evolution families }\label{app:perturbation}

We provide some estimates of the difference of two evolution families, which are crucial for the perturbation results of the sewing map established in Section~\ref{ssec:perturb}.
Recall that the Euler Beta function is the map \( \mathrm B(x,y)=\int _0^1\theta^{x-1}(1-\theta)^{y-1} d\theta\), for \( x,y>0. \)
\begin{lemma}
	\label{lem:K}
Consider two evolution families $S^1,S^2$ on a Banach space \( \cX=\cX_0 \), whose respective infinitesimal generators $L^1_\cdot,L^2_\cdot$ are such that $D(L^1_t)=D(L^2_t)=\cX_{1}\hookrightarrow\cX$ for all \( t\in [0,T] \).
Suppose moreover that \( (\cX_\beta)_{\beta\in [0,1]} \) is a scale of Banach spaces subject to Assumption \ref{ass:intermediate}.
\begin{itemize}
\item
Let $0\leq\beta\le \beta'\leq 1$ be such that $|\beta-\beta'|<1.$ 
Then
\[
S^1-S^2 \in \mathfrak K(\beta,\beta').
\]
If $\beta>0$ and $\beta'<1$, we have the estimate
\[
\|S^1-S^2\|_{(\beta,\beta')}
\leq
\mathrm B(1-\beta',\beta)\|S^1\|_{(0,\beta')}\|S^2\|_{(\beta,1)}\sup_{t\in [0,T]}|L^1-L^2|_{\cX_1\to \cX}.
\]
\item
If either \( \beta =1\) or \( \beta'=0\) and provided \( L^1-L^2\in C^\varrho([0,T];\cL(\cX_1,\cX))\) with \( \varrho\in (0,\beta\wedge (1-\beta')) \) we have instead, for \( T=T_{K,\varrho}>0 \) chosen small enough
\begin{equation}\label{perturb_critical}
\|S^1-S^2\|_{(\beta,\beta')}
\lesssim _{K,\tilde K} \sup_{t\in [0,T]}|L^1_t-L^2_t|_{\cX_1\to \cX}+T^{\varrho}|L^1-L^2|_{\varrho;\cX_{1}\to\cX}.
\end{equation}
\end{itemize}
\end{lemma}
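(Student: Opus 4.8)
The plan is to reduce everything to the integral equations \eqref{first_int}--\eqref{second_int} satisfied by $S^1$ and $S^2$, which express each propagator as a perturbation series around the ``frozen-coefficient'' semigroups $a_{t,s}=e^{(t-s)L_s}$ and $b_{t,s}=e^{(t-s)L_t}$. Subtracting these identities for the two families and iterating will produce a telescoping expansion of $S^1_{t,s}-S^2_{t,s}$ in which each term carries one factor $\dd L_{\cdot,\cdot}:=L^1_\cdot-L^2_\cdot$ and a product of propagator kernels, and the key point is that the resulting convolution integrals are of Beta-function type. Concretely, I would first treat the \emph{bulk} case $0<\beta\le\beta'<1$ with $|\beta-\beta'|<1$: writing schematically $S^1-S^2 = S^1\star((L^1-L^2)\,\cdot\,) \star S^2$ (using $S^i$ as left/right approximants via \eqref{first_int}--\eqref{second_int}), one gets pointwise
\[
|S^1_{t,s}-S^2_{t,s}|_{\cX_\beta\to\cX_{\beta'}}
\le \int_s^t |S^1_{t,r}|_{\cX_0\to\cX_{\beta'}}\,|L^1_r-L^2_r|_{\cX_1\to\cX_0}\,|S^2_{r,s}|_{\cX_\beta\to\cX_1}\,dr .
\]
Inserting $|S^1_{t,r}|_{0\to\beta'}\le\|S^1\|_{(0,\beta')}(t-r)^{-\beta'}$ and $|S^2_{r,s}|_{\beta\to1}\le\|S^2\|_{(\beta,1)}(r-s)^{-(1-\beta)}$ and substituting $r=s+\theta(t-s)$ gives exactly the factor $\mathrm B(1-\beta',\beta)(t-s)^{1-\beta'-(1-\beta)}=\mathrm B(1-\beta',\beta)(t-s)^{\beta-\beta'}$, which after multiplying by $(t-s)^{\beta'-\beta}$ yields the claimed bound for $\|S^1-S^2\|_{(\beta,\beta')}$ and in particular its finiteness, i.e.\ $S^1-S^2\in\mathfrak K(\beta,\beta')$. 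The integrability of $\theta\mapsto\theta^{\beta-1}(1-\theta)^{-\beta'}$ is precisely guaranteed by $\beta>0$ and $\beta'<1$.

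For the endpoint cases $\beta=1$ or $\beta'=0$, the naive estimate above fails because one of the exponents hits the non-integrable threshold, so one cannot simply lose a full unit of regularity on one propagator factor. The remedy is to exploit the extra H\"older regularity hypothesis $L^1-L^2\in C^\varrho([0,T];\cL(\cX_1,\cX))$: I would add and subtract $(L^1_r-L^2_r)$ evaluated at an endpoint ($s$ or $t$), so that the ``frozen'' part can be absorbed into a genuine semigroup comparison (handled by Remark \ref{rem:constant_K}, i.e.\ the anti-smoothing estimate \eqref{anti_smoothing_S} for $S^i-I$ that buys back the missing power of $(t-s)$), while the remaining part comes with a factor $|r-s|^\varrho$ (or $|t-r|^\varrho$) which makes the convolution integral $\int_s^t(t-r)^{-\beta'}|r-s|^{\varrho-(1-\beta)}\,dr$ of Beta type $\mathrm B(1-\beta',\varrho+\beta)$ — finite precisely under $\varrho<\beta\wedge(1-\beta')$. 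Iterating this one step (a single Picard-type iteration of the integral equation, valid for $T=T_{K,\varrho}$ small so the iteration contracts) produces the two-term bound $\sup_t|L^1_t-L^2_t|_{\cX_1\to\cX} + T^\varrho|L^1-L^2|_{\varrho;\cX_1\to\cX}$ with an implied constant depending only on $K,\tilde K$ (through $\|S^i\|_{(0,1)}$, $\|S^i-I\|_{(1,0)}$ and the relevant Beta values); the smallness of $T$ is needed only to sum the geometric series of the iteration, and for $T$ not small one concatenates finitely many such estimates.

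The main obstacle I anticipate is bookkeeping at the endpoints: making the add-and-subtract decomposition rigorous requires care that every intermediate propagator norm stays within the admissible index window $[0,1]$ of the scale, and that the anti-smoothing bound \eqref{anti_smoothing_S} is applied between indices whose difference is strictly less than $1$; one must also check that when $\beta=1$ and $\beta'=0$ simultaneously, the hypothesis $\varrho\in(0,\beta\wedge(1-\beta'))=(0,1)$ still permits a single splitting that regularizes both ends (here one splits at the midpoint or does two successive reductions). The bulk estimate, by contrast, is a direct one-line convolution computation once \eqref{first_int}--\eqref{second_int} are invoked, so essentially all the real work is in the second bullet.
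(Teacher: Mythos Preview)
Your bulk case ($0<\beta$, $\beta'<1$) is exactly the paper's argument: both use the representation
\[
S^1_{t,s}-S^2_{t,s}=\int_s^t S^1_{t,r}\,(L^1_r-L^2_r)\,S^2_{r,s}\,dr
\]
(this is \cite[Lemma~5.1.4]{amann95quasilinear}, and is what \eqref{first_int}--\eqref{second_int} collapse to here) followed by the Beta convolution. Nothing to add.

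For the endpoint case your proposal diverges from the paper and has a gap. After the add-and-subtract, the H\"older part is fine, but the ``frozen'' piece
\[
\int_s^t S^1_{t,r}\,(L^1_t-L^2_t)\,S^2_{r,s}\,dr
\]
still carries the kernel $(t-r)^{-1}(r-s)^{\beta-1}$ when $\beta'=1$, and it is not clear how the anti-smoothing bound for $S^i-I$ removes this: $(L^1_t-L^2_t)$ does not commute with either propagator, so there is no obvious cancellation or integration-by-parts that stays inside the scale $(\cX_\alpha)_{\alpha\in[0,1]}$. Your remark about a Picard iteration is pointing in the right direction, but the decomposition has to be carried out on the Volterra kernels, not on $L^1-L^2$ inside the Duhamel formula.

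What the paper does instead is work directly with \eqref{second_int}: writing $S^j=b^j-h^j\star S^j$ with $h^j_{t,s}=\dd L^j_{t,s}\,b^j_{t,s}$ and subtracting gives
\[
(I-h^2\star)(S^1-S^2)=(b^1-b^2)+(h^1-h^2)\star S^1.
\]
Here $b^1-b^2$ is a genuine \emph{semigroup} difference (frozen generators) with a clean $\mathfrak K(\beta,1)$-bound, and $h^1-h^2$ is controlled by $|L^1-L^2|_{\varrho}$ because $h^j$ already contains the increment $\dd L^j_{t,s}$. One then inverts $I-h^2\star$ using that $h^2\star$ has zero spectral radius on $\mathfrak K(\beta,1)$ (a contraction for $T$ small), which is the iteration you alluded to. The H\"older hypothesis on $L^1-L^2$ thus enters through $h^1-h^2$, not through freezing $L^1-L^2$ at an endpoint.
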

\begin{proof}
 Using~\cite[Lemma 5.1.4]{amann95quasilinear} we have that
\begin{align}\label{representation1}
S^1_{t,s} - S^2_{t,s}=\int\limits_s^t S^1_{t,\tau} [L^1_\tau -L^2_\tau] S^2_{\tau, s}d\tau.
\end{align}
Then we estimate the norm $|\cdot|_{\cX_\beta\to\cX_{\beta'}}$ as follows assuming that $\beta'\neq 1$ and $\beta\neq 0$.
\begin{align*}
|S^1_{t,s} - S^2_{t,s}|_{\cX_\beta\to\cX_{\beta'}} &\leq \int\limits_s^t |S^1_{t,\tau} (L^1_\tau -L^2_\tau) S^2_{\tau, s}|_{\cX_{\beta}\to\cX_{\beta'}}d\tau \\
& \le \int\limits_s^t |S^1_{t,\tau}|_{\cX\to\cX_\beta'} |L^1_\tau -L^2_\tau|_{\cX_1\to\cX} |S^2_{\tau,s}|_{\cX_\beta\to \cX_1}d\tau\\
& \le \|S^1\|_{(0,\beta')}\|S^2\|_{(\beta,1)} \int\limits_s^t (t-\tau)^{-\beta'} (\tau -s)^{\beta-1} d\tau \sup_{r\in [s,t]}|L^1_r-L^2_r|_{\cX_1\to\cX}\\
& \le\mathrm B(1-\beta',\beta) \|S^1\|_{(0,\beta')}\|S^2\|_{(\beta,1)}\sup_{r\in [s,t]}|L^1_r-L^2_r|_{\cX_1\to\cX} (t-s)^{\beta-\beta'}.
\end{align*}

We now adress the proof of \eqref{perturb_critical} when $\beta>0$ and $\beta'=1$, the other case being similar (hence omitted).
For that purpose we need to use the H\"older continuity of the evolution families, i.e.\ the fact that $L^1-L^2\in C^{\rho} (0,T;\cL(\cX_{1},\cX)) $. This step is technically more involved and can be justified from the integral formulas \eqref{first_int}-\eqref{second_int}.
In the notations of Remark \ref{rem:evolution}, we have for \( j=1,2 \)
\[
S^j= b^j - h^j\star S^j,
\]
where \( h^j_{t,s}=\dd L^j_{t,s}.b^j_{t,s} \) and we recall that \( b^j_{t,s}=e^{(t-s)L_t} \).
It is plain to see that
\begin{equation*}
\|h^1-h^2\|_{(1,1;1-\rho)}\lesssim |L^1-L^2|_{\varrho; \cX_1\to\cX }
\end{equation*}
where we introduce the notation
\(\|h\|_{(\alpha,\beta;\kappa)}=\sup_{s<t\in [0,T]}(t-s)^{\kappa}|h^1_{t,s}-h^2_{t,s}|_{\alpha\to \beta}\).
A basic change of variable then yields
\begin{equation}
\label{est:diff_h}
\begin{aligned}
\|(h^1-h^2)\star S\|_{(\beta,1)}
&\le \mathrm B(\varrho,\beta-\varrho)\|h^1-h^2\|_{(1,1;1-\varrho)}\|S\|_{(\beta,1;1-\beta+\varrho)}
\\&
\lesssim_{K,\varrho,\beta} T^\varrho|L^1-L^2|_{\varrho; \cX_1\to\cX }\,.
\end{aligned}
\end{equation}
(we refer to \cite[Lemma 5.3]{amann1986quasilinear}) for details).

Consequently
\begin{equation}
\label{pre_neumann}
(I - h^2\star )(S^1-S^2) = b^1-b^2 + (h^1-h^2)\star S^1\,.
\end{equation}
Observing that the linear operator \( h^2\star \in \cL \big(\mathfrak K(\beta,1) \big)\) has zero spectral radius, we can revert \eqref{pre_neumann} and obtain in particular for \( 0<T\ll1 \) (using \eqref{est:diff_h})
\[
\begin{aligned}
\|S^1-S^2\|_{(\beta,1)} 
&\le \frac{1}{1-|h^2\star|_{\mathfrak K(\beta,1)\to\mathfrak K(\beta,1)}} [\|b^1-b^2\|_{(\beta,1)} + \|(h^1-h^2)\star S^1\|_{(\beta,1)}]
\\&
\lesssim 
\frac{1}{1-cT^\varrho}[|L^1-L^2|_{0;\cX_1\to\cX } + T^\varrho |L^1-L^2|_{\varrho;\cX_1\to\cX_0}]
\end{aligned}
\]
which gives the desired conclusion.
\end{proof}

\subsection{Reduced versus plain increments}\label{app:plain}

Recall that, for a Banach space \( \cX\), a multiplicative \( S=S_{t,s}\in \mathscr L(\cX)\) and a path \( y\colon [0,T]\to\cX\),
the \textit{reduced increment} of \( y\) (w.r.t.\ \( S\)) is the 2-parameter quantity \[
\dd^{S}y_{t,s}=y_t-S_{t,s}y_s,\quad \quad 0\le s\le t\le T.
\]
If \( (y,y')\) is a controlled path, we may also introduce the reduced remainder
\[
R^{S,y}_{t,s}:= \dd^Sy_{t,s}-S_{t,s}y'_t\cdot\dd X_{t,s}\,.
\]
The main idea of the next result is that the H\"older norm of certain controlled rough paths (satisfying an additional regularity property) can be estimated in terms of the corresponding ``reduced'' quantity.
\begin{lemma}
	\label{lem:delta_S}
	Let \( S=\exp(\int L_rdr)\) be as in Assumption \ref{ass:L_t}. 
	Fix \( X\in \mathscr C^{\gamma}(\R^d)\) with \( \gamma\in (\frac13,\frac13)\) and \( \alpha\in (0,1)\).
Furthermore let \( (y,y')\in \cD_{X,\alpha,\sigma}^{\gamma,\gamma'}(0,T;\varepsilon)\) where \( \varepsilon=\gamma-\sigma\) and additionally suppose that \( y\in C^{0,\varepsilon}(\cB_{\alpha+\varepsilon})\).
	\begin{itemize}
	\item [1)] We have the norm equivalence
	\begin{equation}
		\label{delta_S_equiv}
	|y|_{0,\alpha+\varepsilon}^{(\varepsilon)}+[\dd^{S}y]^{(\varepsilon)}_{\gamma,\alpha-\sigma}
	\simeq
	|y|_{0,\alpha+\varepsilon}^{(\varepsilon)}+[\dd y]^{(\varepsilon)}_{\gamma,\alpha-\sigma}
	\end{equation}
and similarly for the remainders
	\begin{equation}
	\label{R_S_equiv}
	|y|_{0,\alpha+\varepsilon}^{(\varepsilon)}+ |y'|_{0,\alpha-\sigma}^{(\varepsilon)}+[R^{S,y}]^{(2\varepsilon)}_{\gamma+\gamma',\alpha-\sigma-\gamma'}
	\simeq	|y|^{(\varepsilon)}_{0,\alpha+\varepsilon}+|y'|_{0,\alpha-\sigma}^{(\varepsilon)}+[R^y]^{(2\varepsilon)}_{\gamma,\alpha-\sigma-\gamma'},
\end{equation}
	for constants depending only on \( K,\tilde K\) and \(\rho_{\gamma}(\X).\)
	\item [2)]
	Fix another evolution family \( \bar S=\exp(\int \bar L_rdr)\) and \((\bar y,\bar y')\in \cD_{X,\alpha,\sigma}^{\gamma,\gamma'}(0,T;\varepsilon)\) such that \( \bar y\in C^{0,\varepsilon}(\cB_{\alpha+\varepsilon}).\)
	Let \( N:=|y|^{(\varepsilon)}_{0,\alpha+\varepsilon}\vee |\bar y|^{(\varepsilon)}_{0,\alpha+\varepsilon}\). We have the norm equivalences
	\begin{multline}\label{delta_S_equiv_diff}
|y-\bar y|^{(\varepsilon)}_{0,\alpha+\varepsilon}+[\dd^{S}y-\dd^{\bar S}\bar y]^{(\varepsilon)}_{\gamma,\alpha-\sigma} +\sup_{t\in[0,T]}|L_t-\bar L_{t}|_{1\to0}
	\\
	\simeq
	|y-\bar y|^{(\varepsilon)}_{0,\alpha+\varepsilon}+[\dd y-\dd \bar y]^{(\varepsilon)}_{\gamma,\alpha-\sigma}+\sup_{t\in[0,T]}|L_t-\bar L_{t}|_{1\to0}
	\end{multline}
and
\begin{multline}\label{R_S_equiv_diff}
	|y-\bar y|^{(\varepsilon)}_{0,\alpha+\varepsilon}+|y'-\bar y'|^{(\varepsilon)}_{0,\alpha-\sigma}+[R^{S,y}-R^{\bar S,y}]^{(2\varepsilon)}_{\gamma+\gamma',\alpha-\sigma-\gamma'} +\sup_{t\in[0,T]}|L_t-\bar L_{t}|_{1\to0}
	\\
	\simeq
	|y-\bar y|^{(\varepsilon)}_{0,\alpha+\varepsilon}+|y'-\bar y'|^{(\varepsilon)}_{0,\alpha-\sigma}+[R^{y}-R^{\bar y}]^{(2\varepsilon)}_{\gamma+\gamma',\alpha-\sigma-\gamma'}+\sup_{t\in[0,T]}|L_t-\bar L_{t}|_{1\to0},
\end{multline}
for implicit constants depending on \( K,\tilde K,\rho_{\gamma}(\X)\) and \( N.\)
\end{itemize}
\end{lemma}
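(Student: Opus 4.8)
The plan is to deduce all four equivalences from the tautological identity \eqref{id_delta_S}, i.e.\ $\dd y_{t,s}=\dd^{S}y_{t,s}+(S_{t,s}-I)y_s$, together with its analogues for the reduced remainders and for differences of two paths, namely
\[
R^{y}_{t,s}-R^{S,y}_{t,s}=(S_{t,s}-I)y_s+(S_{t,s}-I)y'_t\cdot \dd X_{t,s}
\]
and
\[
\dd^{S}y_{t,s}-\dd^{\bar S}\bar y_{t,s}=(\dd y_{t,s}-\dd\bar y_{t,s})-(S_{t,s}-I)(y_s-\bar y_s)-(S_{t,s}-\bar S_{t,s})\bar y_s
\]
(with the obvious variant for $R^{S,y}-R^{\bar S,\bar y}$, obtained by subtracting the two remainder identities). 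Consequently the lemma reduces to bounding the various ``correction terms'' appearing on the right in the relevant weighted H\"older semi-norms. All the power-counting is governed by the two exponents $\gamma$ (time regularity) and $\varepsilon=\gamma-\sigma$ (weight), and rests on the single identity $\varepsilon+\sigma=\gamma$ together with $s\le t$; the only analytic inputs are the smoothing and anti-smoothing bounds \eqref{smoothing_S}--\eqref{anti_smoothing_S} and the integral representation \eqref{representation1}.

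First I would settle Part 1). For \eqref{delta_S_equiv} I would invoke \eqref{anti_smoothing_S} in the form $|S_{t,s}-I|_{\alpha+\varepsilon\to\alpha-\sigma}\lesssim_{\tilde K}|t-s|^{\gamma}$ (here $\alpha+\varepsilon-(\alpha-\sigma)=\gamma$), obtaining $|(S_{t,s}-I)y_s|_{\alpha-\sigma}\lesssim|t-s|^{\gamma}s^{-\varepsilon}|y|^{(\varepsilon)}_{0,\alpha+\varepsilon}$; multiplying by $s^{\varepsilon}|t-s|^{-\gamma}$ shows that this correction is controlled by $|y|^{(\varepsilon)}_{0,\alpha+\varepsilon}$, and then \eqref{id_delta_S} (used in both directions, since $S_{t,s}-I$ enters with either sign) yields \eqref{delta_S_equiv}. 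For \eqref{R_S_equiv} I would treat $(S_{t,s}-I)y_s$ in the same way but in $\cB_{\alpha-\sigma-\gamma'}$ (exponent $\gamma+\gamma'$, the surplus $s^{\varepsilon}$ from the $s^{2\varepsilon}$-weight contributing a harmless $T^{\varepsilon}$), and $(S_{t,s}-I)y'_t\cdot\dd X_{t,s}$ via $|S_{t,s}-I|_{\alpha-\sigma\to\alpha-\sigma-\gamma'}\lesssim|t-s|^{\gamma'}$ and $|\dd X_{t,s}|\le\rho_{\gamma}(\X)|t-s|^{\gamma}$, where the remaining factor $s^{2\varepsilon}t^{-\varepsilon}\le s^{\varepsilon}$ is disposed of using $s\le t$.

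Next I would do Part 2). The correction $(S_{t,s}-I)(y_s-\bar y_s)$ is handled exactly as above with $y$ replaced by $y-\bar y$. The genuinely new object is $(S_{t,s}-\bar S_{t,s})\bar y_s$ (and, for the remainder equivalence, $(S_{t,s}-\bar S_{t,s})\bar y'_t$), for which I would plug in $(S-\bar S)_{t,s}=\int_s^t S_{t,\tau}(L_\tau-\bar L_\tau)\bar S_{\tau,s}\,d\tau$ and estimate $|S_{t,\tau}|_{0\to\alpha-\sigma}\lesssim(t-\tau)^{-(\alpha-\sigma)}$, $|\bar S_{\tau,s}|_{\alpha+\varepsilon\to1}\lesssim(\tau-s)^{-(1-\alpha-\varepsilon)}$; the $\tau$-integral is then an Euler Beta integral equal to $\mathrm B\big(\alpha+\varepsilon,1-(\alpha-\sigma)\big)\,|t-s|^{1-(\alpha-\sigma)-(1-\alpha-\varepsilon)}$, and $1-(\alpha-\sigma)-(1-\alpha-\varepsilon)=\sigma+\varepsilon=\gamma$. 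With $|\bar y_s|_{\alpha+\varepsilon}\lesssim s^{-\varepsilon}N$ and the $s^{\varepsilon}$-weight this produces a contribution $\lesssim N\,\sup_r|L_r-\bar L_r|_{1\to0}$, which is absorbed into the copy of $\sup_r|L_r-\bar L_r|_{1\to0}$ present on both sides of \eqref{delta_S_equiv_diff}. The equivalence \eqref{R_S_equiv_diff} then follows by the same two mechanisms, decomposing each correction into an $(S-I)(\cdot)$-piece (bounded as in Part 1) and an $(S-\bar S)(\cdot)$-piece (bounded via \eqref{representation1}), now evaluated in $\cB_{\alpha-\sigma-\gamma'}$ against the $|t-s|^{\gamma+\gamma'}$ and $s^{2\varepsilon}$ weights, where again the Beta-integral exponent sums to $\gamma+\gamma'$ and the leftover powers are harmless powers of $s/t\le1$ and of $T$ (times a constant depending on the enriched norms of $y,\bar y$).

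I expect the \emph{main obstacle} — and the only place where the standing hypothesis $y,\bar y\in C^{0,\varepsilon}(\cB_{\alpha+\varepsilon})$ is genuinely used — to be extracting the H\"older-in-time gain from $(S-\bar S)_{t,s}\bar y_s$: if one tried to let $\bar S_{\tau,s}$ act on $\bar y_s$ merely in $\cB_{\alpha-\sigma}$, the Beta integral would carry no positive power of $|t-s|$ and one would recover only boundedness, not $\gamma$-H\"older smallness. It is precisely because $\bar y_s\in\cB_{\alpha+\varepsilon}$ that the two exponents $-(\alpha-\sigma)$ and $-(1-\alpha-\varepsilon)$ add up to $\gamma-1\in(-1,0)$, keeping the integral convergent while producing the factor $|t-s|^{\gamma}$. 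Apart from this, the argument is routine power-counting, once one records that all spatial indices involved ($\alpha-\sigma$, $\alpha-\sigma-\gamma'$, $\alpha+\varepsilon$, and $1$) lie in $[0,1]$ under the standing assumptions, so that \eqref{smoothing_S}--\eqref{anti_smoothing_S} apply.
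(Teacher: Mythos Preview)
Your proposal is correct and follows essentially the same approach as the paper: both proofs rest on the identity \eqref{id_delta_S} (and its remainder analogue $R^{S,y}=R^y+(I-S_{t,s})[y_s+y'_t\cdot\dd X_{t,s}]$), bound the correction $(S_{t,s}-I)y_s$ via \eqref{anti_smoothing_S} with the $\cB_{\alpha+\varepsilon}$-regularity of $y$, and for Part~2) decompose into an $(S-I)(y-\bar y)$-piece plus an $(S-\bar S)\bar y$-piece. The only cosmetic difference is that the paper packages the $(S-\bar S)$-bound into the statement $\|S-\bar S\|_{(\alpha+\varepsilon,\alpha-\sigma)}\lesssim\sup_t|L_t-\bar L_t|_{1\to0}$ of Lemma~\ref{lem:K}, whereas you inline the same Beta-integral computation directly; your identification of the $C^{0,\varepsilon}(\cB_{\alpha+\varepsilon})$ hypothesis as the ingredient making that integral produce the factor $|t-s|^{\gamma}$ is exactly right.
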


\begin{proof}
	The lower and upper bounds follow by similar arguments. We begin with the first statement and observe due to~\eqref{id_delta_S} for $s>0$ that
	\begin{align*}
		s^\varepsilon \dd^{S} y_{t,s} =s^\varepsilon(\dd y_{t,s} + (S_{t,s}-\id)y_s ),
	\end{align*}
	which proves the lower bound in 1) due to the regularity properties of $y$. Furthermore, using this bound on $[\dd^S y]^{(\varepsilon)}_{\gamma,\beta-\gamma}$, we analogously derive
	\begin{align*}
	 s^\varepsilon|\dd y_{t,s}|_{\beta-\gamma} &\le s^{\varepsilon}(|\dd^Sy_{t,s}|_{\beta-\gamma} + |(S_{t,s}-\id)y_s|_{\beta-\gamma} )
	 \le  |t-s|^{\gamma}[\dd y]^{(\varepsilon)}_{\gamma,\beta-\gamma}
	 +|t-s|^\gamma \tilde K |y|^{(\varepsilon)}_{0,\beta}\,.
	\end{align*}
Setting $\beta=\alpha+\varepsilon$ this proves~\eqref{delta_S_equiv}. 
For the second estimate \eqref{R_S_equiv}, we rely on similar arguments and the relation
\[
 R^{S,y}_{t,s}=R^{y}_{t,s} + (I-S_{t,s})[y_s + y'_t\cdot\dd X_{t,s}].
\]

For the upper bound in \eqref{delta_S_equiv_diff}, we note that
	\[
	\dd^{S} y_{t,s} - \dd^{\bar S}\bar y_{t,s}
	= \dd (y-\bar y)_{t,s} + (S_{t,s}-I)(\bar y_s-y_s)+ (\bar S-S)_{t,s}\bar y_s \,.
	\]
	Therefore
	\[
	s^{\varepsilon}|\dd^{S} y_{t,s} - \dd^{\bar S}\bar y_{t,s}|_{\beta-\gamma}
	\le |t-s|^{\gamma}([\dd y-\dd \bar y]^{(\varepsilon)}_{\gamma,\beta-\gamma} 
	+ \tilde K|y-\bar y|^{(\varepsilon)}_{0,\beta} + \|S-\bar S\|_{(\beta,\beta-\gamma)}N )
	\]
	and Lemma \ref{lem:K} yields our conclusion. The lower bound and the second estimate \eqref{R_S_equiv_diff} are similar.
\end{proof}


\bibliographystyle{alpha}

\end{document}